\documentclass[11pt, letterpaper]{article}
\usepackage[margin=1in]{geometry}

\usepackage[round]{natbib}
\bibpunct[, ]{(}{)}{,}{a}{}{,}
\usepackage[utf8]{inputenc}
\usepackage{ifthen}
\usepackage{float}
\usepackage{graphicx}
\usepackage{nomencl}
\usepackage{authblk}
\usepackage[hidelinks]{hyperref}

\usepackage{amsmath, amssymb, amsthm}
\newtheorem{theorem}{Theorem}
\newtheorem{lemma}{Lemma}
\newtheorem{proposition}{Proposition}
\newtheorem{corollary}{Corollary}
\newtheorem{definition}{Definition}

\usepackage{cleveref}

\theoremstyle{remark}
\newtheorem{remark}{Remark} 

\usepackage{pgfplots}
\pgfplotsset{compat=1.18} 
\usepackage{tikz}
\usetikzlibrary{positioning}
\usetikzlibrary{shapes.geometric}
\usetikzlibrary{shapes.multipart}
\usetikzlibrary{decorations.text}
\usetikzlibrary{decorations.pathmorphing}

\newcommand{\R}{\mathbb{R}}

\makenomenclature

\renewcommand{\nomgroup}[1]{%
  \ifthenelse{\equal{#1}{C}}{\item[\textbf{Constants}]}{%
  \ifthenelse{\equal{#1}{D}}{\item[\textbf{DC-OPF Dual Variables}]}{%
  \ifthenelse{\equal{#1}{V}}{\item[\textbf{DC-OPF Variables}]}{%
  \ifthenelse{\equal{#1}{S}}{\item[\textbf{Sets}]}{}}}}%
}

\title{On Locational Marginal Emissions in Electricity Markets: A Two-Layered Dispatch Mechanism and Its Fundamental Theorems}

\author[1]{Andy Sun\thanks{sunx@mit.edu}}
\affil[1]{Sloan School of Management, Massachusetts Institute of Technology, Cambridge, MA}

\author[2]{Luc Cote\thanks{luccote@mit.edu}}
\affil[2]{Operations Research Center, Massachusetts Institute of Technology, Cambridge, MA}

\date{} 
\allowdisplaybreaks
\begin{document}

\maketitle

\begin{abstract}
We propose a market design for real-time electricity markets that utilizes a two-layered dispatch mechanism to systematically incorporate carbon accounting into grid operations. In this mechanism, ``dispatch'', the centralized allocation of generation resources to meet system load, is executed via a hierarchical structure where the first layer minimizes financial costs to maintain economic efficiency, while the second layer minimizes system emissions strictly within the set of cost-optimal solutions. We define locational marginal emissions (LMEs) as the marginal rate of system emissions derived from the dual variables of the two-layered formulation. Unlike standard marginal prices which correspond to right-hand-side constraint relaxations, LMEs must account for the requirement of economic optimality which introduces demand parameters into the problem's constraint structure. Under the framework, we establish that LMEs satisfy properties analogous to the first and second fundamental theorems of welfare economics. We prove that (1) decentralized ``carbon profit'' maximization by individual grid entities guarantees a system-wide emission profile consistent with the economic dispatch, and (2) any optimal low-carbon economic dispatch is supported by a corresponding set of LME signals acting as a decentralized equilibrium. Furthermore, we establish a general carbon accounting theorem, called the Carbon Footprint Theorem, showing that these market-consistent LMEs ensure the sum of carbon accounts across all grid components (loads, generators, transmission, and storage) equals the total physical carbon emissions. This completes the theoretical foundation of the LME. Finally, we investigate and validate the empirical properties of LMEs and LME-based carbon accounting through case studies on a realistic Texas grid model.
\end{abstract}

\noindent\textbf{Keywords:} energy, carbon accounting, scope 2, marginal emissions


%


\section{Introduction}
Accounting for over 30\% of worldwide greenhouse gas emissions, the electricity sector represents a critical frontier for global climate targets \citep{ipcc_emissions_2023}.
Industrial and commercial electricity usage is currently responsible for approximately half of all grid demand \citep{iea_electricity_2021} and is likely to only grow as artificial intelligence and energy-intensive data centers continue to develop \citep{cao_toward_2022}. Leveraging such key stakeholders has the potential to significantly accelerate decarbonization pathways, giving the proper alignment of corporate incentives and grid operations a major role in creating a clean energy future. Such incentive systems typically work through principles of carbon accounting, particularly in the form of scope two emissions accounting which seeks to attribute indirect emissions to the consumption of electricity, heating, and cooling. 

\subsection{Issues in GHG Accounting} \label{sec:issues}
The Greenhouse Gas Protocol currently outlines two distinct methods for accounting of scope two emissions from electricity consumption: location-based and market-based \citep{ghgscope2}. The location-based method relies on calculating a grid average emissions factor for the consumer's region over a certain (usually lengthy) time period and multiplies this by the consumer's energy over the period. The market-based approach allows the consumer to make choices regarding their electricity supplier through the use of energy market mechanisms such as particular retail suppliers or the purchase of energy attribute certificates. Such market-based approaches can be used by consumers to decrease their electricity-derived scope two emissions even if they are located within a high-emission region.
While these methods represent progress, the current corporate accounting practices outlined in the GHG protocol have been the subject of criticism from key stakeholders including both corporations and researchers. The main challenges faced by such an accounting scheme may be summarized by the following four issues: \textit{(1)} deliverability, \textit{(2)} double counting, \textit{(3)} additionality, and \textit{(4)} impact magnitude \citep{gillenwater_redefining_2008, brander_creative_2018, bjorn_renewable_2022, Brander}. 

Implicit in each of these issues is a disconnect in accounting regimes between traditional location-based carbon account allocation and the impact of market-based interventions. In seeking to address such a disconnect we consider how a unified accounting approach based on the causal nature of consumption, production, and interventions may improve upon principles of deliverability, double counting, and impact magnitude. Such causality based factors are typically referred to as locational marginal emissions, or hereafter, LMEs.

\subsection{Literature Review}\label{sec:litrev}
LMEs have seen growing attention from literature in recent years, with the evaluation and quantification of decarbonization becoming increasingly important. Studies typically use LMEs for the evaluation of system interventions and consumer actions \citep{jiang_can_2025,wang_locational_2014,nilges_validity_2025, he_using_2021}, but have also been considered for applications in carbon accounting and grid planning frameworks \citep{rudkevich_locational_2012,xu_system-level_2024}. Such interest in applications has additionally led to the publishing of real-world LME data, most notably by PJM \citep{pjm_five_2025}.

 Despite significant usage in literature and applications, the methodology behind the calculation of LMEs remains varied, generally falling into two categories: empirical \citep{xu_system-level_2024, he_is_2024, hawkes_estimating_2010, siler-evans_marginal_2012, marnay_estimating_2002} and analytical \citep{bettle_interactions_2006, valenzuela_dynamic_2024,ruiz2010analysis}. Empirical approaches estimate emissions responses, often via regression or counterfactual simulations, while useful for prediction, these methods are unable to provide a technical and accurate definition for LMEs. In contrast, analytical methods derive LMEs directly from power system models. The standard OPF-based framework introduced by \cite{ruiz2010analysis} allows for precise nodal definitions but relies on assumptions of OPF solution uniqueness and non-degeneracy. In experimental or planning settings which may require assumptions about grid data (such as generator cost) and model such assumptions on shared characteristics (such as generator type), it can be common for non-unique solutions to exist. Indeed, \cite{gorka_electricityemissionsjl_2025} notes that this may be an issue in practice and provides a method for artificially perturbing power flow dispatch solutions with small amounts of cost noise to generate such solutions. Furthermore, such approaches rely on post-dispatch knowledge of the discrete set of marginal generators and tight lines. Thus, if one wishes to integrate LME calculation into optimization problems such as capacity expansion, this would necessitate the use of integer variables and big-M formulations to track such a set in addition to the already bilinear definition of LMEs.

In addition to the varied literature around calculation methodologies, the market mechanisms and decentralized properties of LME-based schemes have remained understudied. In the realm of power system economics, the widespread adoption of Locational Marginal Pricing (LMP) is driven by its alignment with the fundamental theorems of welfare economics \citep{Microeconomic_Theory}. Specifically, LMPs satisfy two critical properties necessary for efficient market operation \citep{hogan1998competitive, jiang2023duality}:
\begin{theorem}[Social Welfare of LMPs]\label{thm:lmp:sw}
    \item \textbf{Sufficiency:} Analogous to the First Fundamental Theorem of Welfare Economics, LMP pricing ensures that when individual market participants (generators) maximize their own profits against these prices, the resulting dispatch supports the centralized social welfare maximum (minimum system cost).  \label{thm:lmp:fft}
    \item \textbf{Existence:} Analogous to the Second Fundamental Theorem, for any efficient centralized dispatch, there exists a set of LMP prices that support this dispatch as a decentralized equilibrium.\label{thm:lmp:sft}
\end{theorem}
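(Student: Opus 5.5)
The plan is to model the centralized problem as a DC-OPF linear program: minimize $\sum_g c_g p_g$ subject to nodal balance $\sum_{g\in G_n} p_g - d_n = \sum_\ell A_{\ell n} f_\ell$ (or the PTDF form), line limits $|f_\ell|\le \bar f_\ell$, and generator bounds $0\le p_g\le \bar p_g$. LMPs $\lambda_n$ are the multipliers of the nodal balance constraints. Both parts of Theorem~\ref{thm:lmp:sw} then follow from LP duality by decomposing the Lagrangian into agent-level pieces. Generators solve $\max_{0\le p_g\le\bar p_g} (\lambda_{n(g)}-c_g)p_g$. The network operator, acting as a transmission agent, solves $\max_{f \text{ feasible}} \sum_n \lambda_n (\sum_\ell A_{\ell n} f_\ell)$ over its feasible flows, i.e.\ it collects congestion rent. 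Loads are fixed and pay $\lambda_n d_n$.

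\textbf{Sufficiency.} Take prices $\lambda$ and suppose $(p^*, f^*)$ is such that each generator's $p_g^*$ maximizes its profit, $f^*$ maximizes the network's rent, and the market clears (nodal balance holds). Summing the agents' optimality inequalities against any feasible dispatch $(p,f)$ gives
\[\sum_n \lambda_n\Big(\sum_{g\in G_n} p_g^* - \textstyle\sum_\ell A_{\ell n}f^*_\ell\Big) - \sum_g c_g p^*_g \;\ge\; \sum_n \lambda_n\Big(\sum_{g\in G_n} p_g - \textstyle\sum_\ell A_{\ell n}f_\ell\Big) - \sum_g c_g p_g .\]
Since both dispatches satisfy balance, each bracketed term equals $d_n$, so the price terms cancel and $\sum_g c_g p^*_g \le \sum_g c_g p_g$. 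Hence the decentralized outcome is cost-minimal. This is the standard first-welfare-theorem argument and requires no duality.

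\textbf{Existence.} Take an optimal dispatch $(p^*,f^*)$. Because the problem is a feasible, bounded LP, strong duality holds: optimal multipliers exist for balance ($\lambda$), line limits ($\mu^\pm\ge 0$) and generator bounds. The KKT conditions give stationarity in $p_g$, namely $c_g - \lambda_{n(g)} - \underline\nu_g + \bar\nu_g = 0$, together with complementary slackness. These are exactly the KKT conditions of each generator's profit LP at $p_g^*$. Likewise, stationarity in $f$ with the $\mu$ multipliers is exactly the KKT system of the network's rent maximization at $f^*$. Since every agent problem is an LP, KKT is sufficient, so each agent is optimal at its share of $(p^*,f^*)$ under prices $\lambda$. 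Market clearing holds by feasibility. Therefore $(p^*,\lambda)$ is a decentralized equilibrium.

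The main obstacle is the bookkeeping in the existence step: the centralized multipliers must split exactly into independent agent KKT systems. This requires choosing the network-agent model so that the line-limit constraints and their multipliers belong entirely to that agent, and no coupling constraint remains except nodal balance. With the PTDF formulation, the network agent chooses net injections subject to $\sum_n \text{inj}_n=0$ and the PTDF line limits. The system-balance multiplier together with $\sum_\ell \mu_\ell \,\mathrm{PTDF}_{\ell n}$ reproduces $\lambda_n$, and I would verify this identity carefully. Degeneracy or non-uniqueness of $\lambda$ is not a problem: any optimal dual vector works.
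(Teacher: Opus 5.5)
Your argument is correct. The paper gives no proof of Theorem~\ref{thm:lmp:sw}: it states the result as known from the LMP literature. Your decomposition into generators, a rent-maximizing transmission agent and fixed loads therefore supplies a self-contained proof rather than an alternative to one in the paper. Sufficiency is the elementary first-welfare argument and needs no duality. Existence is the observation that the centralized KKT system splits into the agents' KKT systems.

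Two small repairs are needed:
\begin{itemize}
\item \textbf{Sign of the network objective.} With your balance convention, $\sum_\ell A_{\ell n} f_\ell$ is the net injection into the network at $n$, so the network's rent is $-\sum_n \lambda_n \sum_\ell A_{\ell n} f_\ell$. Your displayed inequality already uses this sign, but the objective you state has the opposite one.
\item \textbf{Network feasible set.} It must include Kirchhoff's voltage law, $f_\ell = B_\ell(\theta_i-\theta_j)$, not only the line limits.
\end{itemize}
In the paper's $B$--$\theta$ formulation your ``main obstacle'' disappears. The only coupling constraint is the nodal balance \eqref{eq:dcopf:fb}, whose multipliers are the LMPs. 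Constraints \eqref{eq:dcopf:angle}, \eqref{eq:dcopf:fmax}, \eqref{eq:dcopf:fmin} and their multipliers $z,\rho^\pm$ belong entirely to the network. Stationarity in $(f,\theta)$ is then exactly the network's KKT system, and no PTDF identity needs checking.

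It is worth saying explicitly what your formulation buys, because the statement names only generators. Making the network an optimizing agent is essential, not cosmetic. The paper later invokes Theorem~\ref{thm:lmp:sw}, in the proof of Theorem~\ref{thm:fft}, in the form ``generators profit-maximizing at $\hat\pi$ and induced flows merely feasible $\Rightarrow$ cost-optimal.'' That form fails when LMPs tie with costs. Consider this example:
\begin{itemize}
\item Two buses joined by one line with $f^{max}=100$.
\item A generator at bus 1 with $c=10$ and a generator at bus 2 with $c=50$, both with capacity $200$.
\item A load of $150$ at bus 2.
\end{itemize}
The unique optimum is $(100,50)$ with cost $3500$ and LMPs $(10,50)$. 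Both generators are then indifferent over $[0,200]$. So $(0,150)$ with $f=0$ is generator-optimal and network-feasible, yet it costs $7500$. Your rent-maximizing agent faces $\max_{|f|\le 100} 40f$, which forces $f=100$ and rules this outcome out.
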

These properties have important implications, incentivizing resources to act consistently with the system operator's central dispatch, significantly decreasing the need for forcing mechanisms or non-cooperation fines. When combined with cost additivity (where financial accounts sum to total system cost), they form the bedrock of modern electricity markets. In the context of emissions, \cite{ruiz2010analysis} demonstrated that LMEs can exhibit a similar additivity property, showing that carbon accounts sum to total physical system emissions, once again requiring assumptions of DC-OPF uniqueness and nondegeneracy. However, there has been little literature addressing whether carbon accounting schemes can support a decentralized equilibrium compatible with current pricing schemes.

\subsection{Contributions}
This paper makes the following three main contributions:

\begin{enumerate}
    \item \textit{Two-Layered Dispatch Mechanism:} In \Cref{sec:model} we propose a two-layered dispatch mechanism as a novel market design for real-time electricity markets. By hierarchically structuring the market clearing process to minimize emissions within the set of cost-optimal solutions, this framework moves beyond ex-post accounting to provide a rigorous, optimization-based definition of Locational Marginal Emissions in Definition \ref{def:extlme}. This design ensures LMEs remain well-defined at any grid state and easily adapts to linear dispatch variants, with \Cref{sec:storage} describing one possible adaptation to include storage resources.
    \item \textit{Two-Layered Market Equilibria and Properties:} \Cref{thm:fft} and \Cref{thm:sft} introduce the first and second fundamental theorems of LME-based carbon mechanisms respectively, analogous to the first and second fundamental theorems of welfare economics, and show that grid entities are properly incentivized to follow the lowest emission economically optimal central dispatch. In \Cref{thm:carbonfootprint} we provide a novel proof for the additivity property of LME-based accounting, showing that the property generalizes to the introduced calculation methodology. Finally, through \Cref{eq:carbonfootprintstorage},\Cref{thm:stor-fft}, and \Cref{thm:stor-sft} we show that such properties also generalize to common economic variants such as the inclusion of storage.
    \item \textit{Empirical Properties:} We apply the novel LME calculation methodology to a 385-node realistic representation of the ERCOT grid with hourly dispatches simulated over 1 year of generation and load data. Through such experiments, we analyze the temporal and spatial patterns of LMEs within the ERCOT grid and how such patterns are reflected across carbon accounts. We find the Texas grid to be naturally split into a high renewable generation western section and high consumption eastern section. LMEs within the western portion exhibited consistently low average LMEs while the eastern section demonstrated daily and seasonal temporal trends consistent with those of solar production strength.
\end{enumerate}

\section{Two-layered Dispatch Mechanism for the Calculation and Usage of Locational Marginal Emissions}
In this section we develop theory and intuition around the usage of LMEs. We describe the underlying techniques governing grid dispatch and relate such techniques to system emissions through the usage of a two-layered dispatch model in \Cref{sec:model}. Such descriptions of system emissions are then used in \Cref{sec:lmes} to introduce a principled definition of LMEs and subsequent novel calculation methodology. Finally, in \Cref{sec:accounting} we develop an LME-based carbon accounting system for grid entities analogous to their economic LMP accounts.
\subsection{Economic Dispatch and Marginal Units}\label{sec:ed}
To understand how the emissions of an energy grid can vary with changes in demand, we begin by describing how grid resources are dispatched. In particular, we consider an economic dispatch-driven power grid scheduling process. 

In the simplest case, consider a ``copper plate'' grid that operates without transmission constraints. In this scenario, generation resources are dispatched in least production cost order until all demand is satisfied. In such a scenario, the last unit dispatched, i.e., the highest cost dispatched unit, is considered the marginal unit as an increase or decrease in demand will result in an increase or decrease in this unit's dispatch level. Further complexities surrounding marginal generators can arise. If the last unit is dispatched fully (i.e., to its max power limit), then a decrease in demand would lead to a decrease in this unit's dispatch, but an increase in demand would result in an increase in the dispatch of another generation unit lower on the cost dispatch ranking. In such cases, notions of marginal change such as marginal price or marginal emissions may cease to be smooth, necessitating the use of directional derivatives.

However, most electrical grids are not well characterized by copper plate assumptions and in practice economic dispatch is typically performed through optimal power flow problems. In particular, the DC-OPF problem (and many of its variations) is used to set generation levels to fulfill power demand in a least cost fashion while respecting transmission constraints and power flow laws. 

We define the DC-OPF problem using the following sets: let $N$ denote the set of grid nodes, $G$ the set of generators, $G_i$ the set of generators at node $i$, and $L$ the set of transmission lines. The decision variables are the generation level $P^G_g$ for generator $g$, the power flow $f_\ell$ on line $\ell$, and the voltage angle $\theta_i$ at node $i$. Finally, we have the following constraints on the variable set:
\begin{itemize}
    \item each line $\ell=(i,j)$ has a flow set by the line's admittance, characterized as $B_{ij}$, multiplied by the difference in voltage angles across the line, defined by $\theta_i-\theta_j$ \eqref{eq:dcopf:angle},
    \item each grid node must have balanced input and output power flows \eqref{eq:dcopf:fb},
    \item each line, $\ell$, must not exceed its maximum flow rating of $f^{max}_\ell$ \eqref{eq:dcopf:fmax} \eqref{eq:dcopf:fmin},
    \item each generator, $g$, must not violate its maximum generation rating of $P^{max}_g$ and its minimum generation rating of $P^{min}_g$ \eqref{eq:dcopf:gmax} \eqref{eq:dcopf:gmin}.
\end{itemize}
This leads to the following optimization problem
\begin{align}
   DCOPF(P^D):= \min_{P^G, \theta, f} \quad  & c^T P^G \tag{DCOPF Model} \label{p:dcOPF}\\
    \text{s.t.}\quad &
    B_{ij} (\theta_i - \theta_j) - f_\ell =  0, &\forall \ell=(i,j) \in L,&~[z_\ell],\label{eq:dcopf:angle}\\
   &  \sum_{g \in G_i} P^G_g - \sum_{\ell = (i,j) \in L}f_\ell + \sum_{\ell = (j,i) \in L}f_\ell - P_i^D= 0, & \forall i \in N,&~[\pi_i], \label{eq:dcopf:fb}\\
    & f_\ell \leq f^{max}_\ell, & \forall \ell \in L,&~[\rho^+_\ell],  \label{eq:dcopf:fmax}\\
    & f_\ell \geq -f^{max}_\ell, & \forall \ell \in L,&~[\rho^-_\ell],  \label{eq:dcopf:fmin}\\
    & P^G_g \leq P^{max}_g, & \forall g \in G,&~[\gamma^+_g],\label{eq:dcopf:gmax}\\
    & P^G_g \geq P^{min}_g, & \forall g \in G,&~[\gamma^-_g]\label{eq:dcopf:gmin}.
\end{align} 
where we use the variables in right side brackets to denote each constraint's corresponding Lagrangian dual variable to be used in future sections.

Similar to the copper plate case, we can define marginal units as those generation units which respond to increases or decreases in demand to maintain a feasible optimal solution. In \cite{ruiz2010analysis}, it is shown that under assumptions of solution uniqueness and nondegeneracy the set of marginal units is precisely those generators whose generation level at optimality satisfies
\begin{equation}
    P^{min}_g < P^G_g<P^{max}_g.
\end{equation}

Additionally, to characterize the response of the marginal generators, \cite{ruiz2010analysis} use an electrical engineering analysis term known as the Power Transfer Distribution Factor (PTDF) matrix, denoted as $\Psi$. $\Psi$ represents the Jacobian matrix of the transmission network flows. Specifically, an element $\Psi_{l,n}$ represents the linear sensitivity (or partial derivative) of the flow on transmission line $l$ with respect to a unit increase in power injection at node $n$. Using such a term, the corresponding weighted response of the vector of marginal generators with respect to a change in demand at node $n$ can be described as 
\begin{equation}
    \begin{bmatrix}
    \vec{1} \mid
    \tilde{\Psi}^T
\end{bmatrix}^{-1} \begin{bmatrix}
    \vec{1} \\
    \tilde{\psi}_n
\end{bmatrix},
\end{equation}
where $\tilde{\Psi}$ describes a subset of the power transfer distribution factor matrix, $\Psi$, which contains only columns corresponding to nodes with marginal generators (generators which are not maximally or minimally dispatched) and transmission lines with tight flow constraints, and where $\tilde{\psi}_n$ similarly describes a subset of the $n$th column of $\Psi$ which contains only elements from rows corresponding to transmission lines with tight flow constraints. However, just as in the copper plate case, one should note that without assumptions of uniqueness and non degeneracy, there may be distinct direction-dependent responses to changes in nodal demand.

\subsection{Combined Model}\label{sec:model}
In seeking an improved analytical definition, we introduce a novel method for LME calculation which we term the \emph{combined model}. We formulate an emission-aware economic dispatch problem that minimizes system carbon emissions over the set of least-cost dispatch solutions:
\begin{align}
   E(P^D):= \min_{P^G, \theta, f}\quad  & \sigma^T P^G \tag{Combined Model} \label{p:OPF}\\
    \text{s.t.}\quad &   (P^G, f, \theta) \in X^*(P^D),
\end{align}
where $X^*(P^D)$ is the set of optimal dispatch solutions to the standard economic dispatch problem with DC power flow constraints (\ref{p:dcOPF}). By dualizing the standard DC OPF problem and constraining strong duality via primal and dual objectives, we can describe the economically optimal dispatch set $X^*(P^D)$ with a finite number of linear constraints as
{
\begin{align}
    &B_{ij} (\theta_i - \theta_j) - f_\ell =  0, &\forall \ell=(i,j) \in L, &~[p_{z,\ell}],\label{eq:cm:kvl}\\
    &\sum_{g \in G_i} P^G_g - \sum_{\ell = (i,j) \in L}f_\ell + \sum_{\ell = (j,i) \in L}f_\ell - P_i^D= 0, & \forall i \in N, &~[p_{\pi,i}],\label{eq:cm:fb}\\
    &f_\ell \leq f^{max}_\ell, & \forall \ell \in L,&~[p^+_{\rho,\ell}], \label{eq:cm:fp}\\
    &f_\ell \geq -f^{max}_\ell & \forall \ell \in L,&~[p^-_{\rho,\ell}], \label{eq:cm:fm}\\
    &P^G_g \leq P^{max}_g, & \forall g \in G, &~[p^+_{\gamma,g}],\\
    &P^G_g \geq P^{min}_g, & \forall g \in G, &~[p^-_{\gamma,g}],\\
    &\gamma^+_g + \gamma^-_g + \sum_{i \mid g \in G_i} \pi_i  = c_g, &\forall g \in G, \\
    &-z_\ell + \rho^+_\ell + \rho^-_\ell - \pi_i + \pi_j = 0, &\forall \ell=(i,j)\in L, \\
    &\sum_{\ell = (i,j) \in L} B_\ell z_\ell - \sum_{\ell = (j,i) \in L} B_\ell z_\ell = 0, &\forall i \in N, \\
    &\rho^+, \gamma^+ \leq 0, \\
    &\rho^-, \gamma^- \geq 0, \\
    &c^TP^G - \pi^T P^D - (\rho^+ - \rho^-)^Tf^{max} - (\gamma^+)^T P^{max} - (\gamma^-)^T P^{min} \le 0, &&~[p_o]. \label{eq:cm:opt}
\end{align}
}
Casting the problem in such a way allows us to capture how system carbon output will change by analyzing how the objective function will change with respect to parameter perturbations. However, note that by constraining the objective using strong duality, we have changed the nature of the demand parameter $P^D$ from being a right hand side constraint of the linear program to being a decision variable coefficient in constraint \eqref{eq:cm:opt}. This complicates the analysis of how perturbations in load will impact the overall optimization problem. Fortunately, such analysis problems have seen previous study and we can use a characterization of sensitivity analysis stemming from \cite{freund2009postoptimal} to describe changes in optimization problem value with respect to changes in  decision coefficients\footnote{\Cref{thm:freund} is introduced in \cite{freund2009postoptimal}, however, for completeness we include a generalized proof pertaining to a directional behavior as well in \Cref{sec:sensan}.}.

\begin{lemma}\label{thm:freund}
    Consider a linear optimization problem of the form:
    \begin{align}
        \min \quad &c^T x\\
        \text{s.t. }
        &Ax = b, &[p],\\
        &x \geq 0,
    \end{align}
    with $A\in \R^{m\times n}$. Let $P(\alpha)$ denote the perturbed problem where $A$ is replaced by $(A+\alpha G)$ for an arbitrary matrix $G\in \R^{m\times n}$. Let $z(\alpha)$ denote the optimal value of $P(\alpha)$. Then, for the optimal primal and dual solutions $x^*$ and $p^*$ of the problem $P(\alpha)$, the following holds:
    \begin{equation}
        z'(\alpha)=-(p^*)^TGx^*.
    \end{equation}
\end{lemma}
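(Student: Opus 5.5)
The plan is to bound the difference quotient $\bigl(z(\alpha+\epsilon)-z(\alpha)\bigr)/\epsilon$ from both sides using feasibility and weak duality. We then let $\epsilon\to 0$, assuming (as the statement implicitly does) that optimal primal and dual solutions exist at $\alpha$, are unique or at least vary continuously near $\alpha$, and that $z$ is differentiable there. Write $A_\alpha = A+\alpha G$. Let $x^*$ and $p^*$ be optimal for $P(\alpha)$, and $x_\epsilon, p_\epsilon$ optimal for $P(\alpha+\epsilon)$.

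\textbf{Upper bound.} First we use the dual at $\alpha+\epsilon$ to bound $z(\alpha)$ from above relative to $z(\alpha+\epsilon)$. Since $x^*$ need not be feasible for $P(\alpha+\epsilon)$, we instead use a Lagrangian argument. For any $x\ge 0$ and any dual-feasible $p$ (that is, $c-A_\alpha^Tp\ge 0$), we have $c^Tx \ge p^Tb + (c-A_\alpha^T p)^T x$. Apply this at parameter $\alpha$ with $x=x_\epsilon$, which satisfies $A_{\alpha+\epsilon}x_\epsilon=b$, and $p=p^*$. We get
\[
z(\alpha+\epsilon)=c^Tx_\epsilon \ge p^{*T}A_{\alpha}x_\epsilon + (c-A_\alpha^Tp^*)^Tx_\epsilon \ge p^{*T}(b-\epsilon Gx_\epsilon).
\]
Since $p^{*T}b=z(\alpha)$ by strong duality, this gives $z(\alpha+\epsilon)-z(\alpha)\ge -\epsilon\, p^{*T}Gx_\epsilon$.

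\textbf{Lower bound.} Symmetrically, apply the same inequality at parameter $\alpha+\epsilon$ with $x=x^*$ and $p=p_\epsilon$. We get $z(\alpha)\ge p_\epsilon^T b -\epsilon\, p_\epsilon^T G x^*$. Hence $z(\alpha+\epsilon)-z(\alpha)\le -\epsilon\, p_\epsilon^TGx^*$.

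\textbf{Limit.} Dividing by $\epsilon>0$ gives
\[
-p^{*T}Gx_\epsilon \le \frac{z(\alpha+\epsilon)-z(\alpha)}{\epsilon} \le -p_\epsilon^T G x^*,
\]
with the inequalities reversed for $\epsilon<0$. Letting $\epsilon\to0$ and using $x_\epsilon\to x^*$ and $p_\epsilon\to p^*$, both bounds converge to $-p^{*T}Gx^*$. This yields $z'(\alpha)=-(p^*)^TGx^*$.

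\textbf{Main obstacle.} The hard step is justifying the convergence $x_\epsilon\to x^*$ and $p_\epsilon\to p^*$. In an LP this requires a nondegenerate, unique optimal basis $B$ at $\alpha$. By continuity of the matrix inverse, $B$ remains primal and dual feasible for small $|\epsilon|$, so $x_\epsilon=(A_{\alpha+\epsilon})_B^{-1}b$ and $p_\epsilon^T=c_B^T(A_{\alpha+\epsilon})_B^{-1}$ vary continuously in $\epsilon$.

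I would first try this basis-stability argument. Alternatively, one can differentiate $z(\alpha+\epsilon)=c_B^T(A_B+\epsilon G_B)^{-1}b$ directly. Using $\tfrac{d}{d\epsilon}M^{-1}=-M^{-1}G_BM^{-1}$, this gives $-p^{*T}G_Bx^*_B=-p^{*T}Gx^*$, because the nonbasic entries of $x^*$ are zero.

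Without uniqueness, the same sandwich only yields one-sided directional derivatives, expressed as a min/max over optimal sets. This matches the directional generalization the paper defers to \Cref{sec:sensan}.
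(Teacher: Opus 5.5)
Your argument is correct, but it takes a different route from the paper. The paper proves the more general Lemma~\ref{thm:envelope} (Appendix~\ref{sec:sensan}) and then specializes it to the linear case. That proof proceeds in four steps:
\begin{enumerate}
\item It fixes one sequence $\alpha^{(i)}\downarrow\alpha$ along which primal and dual optimal solutions converge.
\item It substitutes the KKT stationarity condition $c=(A+\alpha G)^Tp+\lambda$ into the difference quotient.
\item It turns $p^T(A+\alpha G)(x^{(i)}-x)/(\alpha^{(i)}-\alpha)$ into $-p^TGx^{(i)}$ using the two feasibility equations.
\item It disposes of $\lambda^Tx^{(i)}/(\alpha^{(i)}-\alpha)$ by an eventual complementary-slackness argument: $\lambda_k>0$ forces $\lambda^{(i)}_k>0$, hence $x^{(i)}_k=0$, for large $i$.
\end{enumerate}
It obtains only a right directional derivative and takes the existence of that derivative as a hypothesis.

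Your weak-duality sandwich replaces this exact computation with two inequalities in which the reduced-cost term $(c-A_\alpha^Tp)^Tx\ge 0$ is simply dropped. This buys several things. No eventual complementary slackness is needed. Both signs of $\epsilon$ are handled at once. Existence of the limit becomes a conclusion rather than an assumption, so your opening assumption that $z$ is differentiable is unnecessary. You also give a concrete sufficient condition, a primal and dual nondegenerate optimal basis, for the convergence the paper merely postulates; it is sufficient, not ``required'' as you say. Without uniqueness, the same two inequalities give Freund's min--max bounds on the one-sided derivatives. Conversely, restricting your sandwich to $\epsilon>0$ along the paper's sequence reproduces the paper's one-sided statement. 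The sandwich also extends to convex objectives via Lagrangian weak duality, without the paper's Taylor-remainder step.

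There are two slips to correct.
\begin{itemize}
\item The general inequality should read $c^Tx=p^TA_\alpha x+(c-A_\alpha^Tp)^Tx\ge p^TA_\alpha x$ for $x\ge 0$. Your version with $p^Tb$ holds only for $x$ feasible at $\alpha$. Also, in that first bound you use the dual at $\alpha$, not at $\alpha+\epsilon$.
\item In the second bound, $A_{\alpha+\epsilon}x^*=b+\epsilon Gx^*$ gives $z(\alpha)\ge p_\epsilon^Tb+\epsilon\,p_\epsilon^TGx^*$, with a plus sign. That is what your stated conclusion $z(\alpha+\epsilon)-z(\alpha)\le-\epsilon\,p_\epsilon^TGx^*$ actually needs.
\end{itemize}
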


\subsubsection{Properties of the Emissions Function}
To understand how grid-level actions influence emissions, we analyze how the emission function $E(P^D)$ responds to variations in demand at node $i$. We define the nodal emissions function as:
\begin{equation}
    f_i(\alpha) := E(P^D+\alpha e_i), 
\end{equation}
where $e_i$ is a vector of length $|N|$ with a one in the $ith$ position and zeros everywhere else. For clarity, we restrict the domain of $E$ to the set of $\hat{P}^D$ such that $X^*(\hat{P}^D)$ is nonempty, i.e., there exists a solution to the \eqref{p:dcOPF}, hereafter referred to as $D$. Similarly, we restrict the domain of $f_i$ to be values of $\alpha$ such that $P^D+\alpha e_i \in D$, hereafter referred to as $D_i$.

\begin{proposition}\label{prop:contpiece}
    The function $f_i:D_i\rightarrow\mathbb{R}$ is a continuous, piecewise linear function on $D_i$.
\end{proposition}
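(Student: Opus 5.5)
The plan is to view $f_i$ as the composition of two parametric linear programs, each with the perturbation entering only through the right-hand side. This sidesteps the coefficient perturbation in \eqref{eq:cm:opt} and the use of \Cref{thm:freund}. Let $z(P^D)$ denote the optimal value of \eqref{p:dcOPF}. Given this value, the optimal set can be written as
\[
X^*(P^D)=\{(P^G,\theta,f)\ \text{feasible for \eqref{p:dcOPF} at } P^D : c^TP^G\le z(P^D)\}.
\]
Hence
\[
E(P^D)=V\bigl(P^D,z(P^D)\bigr),
\]
where $V(b,t)$ is the optimal value of the LP $\min\{\sigma^TP^G : \eqref{eq:dcopf:angle}\text{--}\eqref{eq:dcopf:gmin} \text{ with load } b,\ c^TP^G\le t\}$. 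In this LP both $b$ and $t$ appear only on the right-hand side, which is the property the rest of the argument relies on.

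\emph{Step 1: $z$ is continuous and piecewise linear on $D_i$.} First, $D_i$ is an interval, since it is the projection onto $\alpha$ of a polyhedron. On $D_i$ the DC-OPF is feasible, and its objective is bounded because $P^{min}\le P^G\le P^{max}$. By LP duality,
\[
z(P^D+\alpha e_i)=\max_{k}\ (\text{dual vertex}_k)^T\bigl(\text{rhs}(\alpha)\bigr),
\]
where the maximum runs over the finitely many extreme points of a dual feasible region that does not depend on $\alpha$. The dual polyhedron contains no lines once we account for the generator and line bounds; if recession directions appear, I will add the standard argument that boundedness of the primal excludes ascent along them. Consequently $\alpha\mapsto z(P^D+\alpha e_i)$ is a maximum of finitely many affine functions of $\alpha$. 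It is therefore convex, piecewise linear and continuous on the interval $D_i$.

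\emph{Step 2: $V$ is piecewise linear and continuous on its domain.} By the same duality argument, $V(b,t)$ is a convex polyhedral function of the right-hand side $(b,t)$ on its effective domain $\mathrm{dom}\,V$. On that domain it is finite, because $\sigma^TP^G$ is bounded below on the box of generator limits, even though $\theta$ itself is unbounded. Concretely, $V$ is the maximum of finitely many affine functions of $(b,t)$, restricted to the polyhedron $\mathrm{dom}\,V$. We also have $(P^D+\alpha e_i,\,z(P^D+\alpha e_i))\in\mathrm{dom}\,V$ for every $\alpha\in D_i$, since $X^*$ is nonempty there.

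\emph{Step 3: compose.} The map $\alpha\mapsto(P^D+\alpha e_i,\,z(P^D+\alpha e_i))$ is continuous and piecewise affine on $D_i$. Composing a finite max of affine functions with a piecewise affine map yields a piecewise affine function of $\alpha$. The argument is to refine the breakpoints of $z$ by the finitely many crossing points of the affine pieces of $V$ along each linear segment. Note that $f_i$ need not be convex, because $z$ enters nonlinearly, but convexity is not part of the claim. Continuity follows because each piece is affine and adjacent pieces agree at the breakpoints: both sides equal the value of $V$ at the shared point.

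\emph{Main obstacle.} I expect the delicate point to be the justification of the dual representations. Specifically, one must check that the dual feasible sets are nonempty and independent of $\alpha$, and that the maximum over dual vertices is attained throughout $D_i$, including at endpoints of $D_i$ where the path may touch the boundary of $\mathrm{dom}\,V$. To handle this, I would first invoke the standard fact that a polyhedral convex function is continuous relative to its (polyhedral) domain (Rockafellar, Thm.~10.2). If an explicit argument is wanted instead, I would use primal boundedness: it gives a nonempty dual, and then finitely many dual vertices plus a finite optimum at every feasible right-hand side.
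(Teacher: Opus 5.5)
Your proof is correct, and it takes a genuinely different route from the paper's.

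The paper works through the basis structure of the first layer. It partitions $D_i$ into finitely many intervals on which the set of optimal bases of the \ref{p:dcOPF} is constant. It then argues that on each such interval the optimal face, and hence the second-layer value, moves affinely in $\alpha$. Continuity at the breakpoints is handled separately, via closedness of the basis-optimality conditions and the convex-combination argument of \Cref{lma:basisclosed}.

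You instead remove the coefficient perturbation in \eqref{eq:cm:opt} altogether. You replace the strong-duality row by the primal cost cap $c^TP^G\le t$ evaluated at $t=z(P^D)$, so that $E(P^D)=V(P^D,z(P^D))$ is a composition of two right-hand-side value functions. Piecewise linearity, finiteness of the number of pieces, and continuity then all follow from the standard fact that such value functions are finite maxima of affine functions on their polyhedral domains. The obstacle you anticipate at the endpoints does not arise. Both objectives depend only on the boxed $P^G$, so every feasible right-hand side gives a bounded LP, and the finite-max representation holds on all of $\mathrm{dom}\,V$, boundary included. Any lineality in the dual is orthogonal to every feasible right-hand side, so it does no harm either.

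Your decomposition also exposes structure the paper leaves implicit. Write $V(b,t)=\max_k\{a_k+y_k^Tb+\mu_k t\}$ with $\mu_k\le 0$, since $V$ is nonincreasing in $t$. Then $f_i$ is a maximum of finitely many concave functions $a_k+y_k^T(P^D+\alpha e_i)+\mu_k z(P^D+\alpha e_i)$. This explains the mixed convex and concave kinks in \Cref{fig:emissionsfunc}. Moreover, where both layers are differentiable, the chain rule on $V(b,z(b))$ gives $\partial_{b_i}V+(\partial_t V)\,\pi_i$. This is exactly the form $p_{\pi,i}+p_o\pi_i$ of \Cref{thm:LME}, with the cost-cap multiplier in the role of $p_o$, obtained without appeal to \Cref{thm:freund}.

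What the paper's basis argument gives in return is localization. It places every kink of $f_i$ at a change in the set of optimal DC-OPF bases, that is, a change of marginal units or binding lines. Your argument only places the kinks among the kinks of $z$ and the switching points of the active affine piece of $V$ along the path. It does not tie the latter to changes in the first layer.
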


\begin{proof}

    We observe that the Combined Model is a hierarchical linear program where the parameter $\alpha$ perturbs the demand $P^D$. While $P^D$ appears as a right-hand side constant in the flow balance constraints \eqref{eq:dcopf:fb}, it acts as a coefficient in the strong duality constraint \eqref{eq:cm:opt}, altering the constraint matrix of the \ref{p:OPF} with respect to the dual variables. To establish the piecewise linear property despite this matrix perturbation, we utilize the two-layered structure of the problem. The domain $D_i$ can be partitioned into a finite number of intervals based on the optimal bases of the \ref{p:dcOPF}. Within any such interval, the set of optimal bases for the \ref{p:dcOPF} remains invariant. Consequently, the feasible set of the \ref{p:OPF} (which corresponds to the optimal face of the \ref{p:dcOPF}) evolves linearly with $\alpha$, and so too does the optimal value to the \ref{p:OPF}. 
    
    To establish continuity between intervals, consider a boundary point $\bar{\alpha}$ separating two intervals. Since the optimality conditions for a basis to the \ref{p:dcOPF} consist of closed linear inequalities, a basis that is optimal on the adjacent open interval remains optimal at the boundary $\bar{\alpha}$. This ensures that the linear value function defined by this basis extends to $\bar{\alpha}$ as a valid feasible solution for the \ref{p:OPF}. Furthermore, from the linearity of the problem no strictly better solution can emerge discontinuously at the boundary. If such a solution existed at $\bar{\alpha}$, it would necessarily be superior within the adjacent interval's neighborhood, contradicting the optimality of the original basis and thus, the function limits from both sides must converge to the function value at $\bar{\alpha}$, leading to the continuity of $f_i$ on $D_{i}$.\footnote{For the interested reader, we provide further discussion and intuition on the properties of the \ref{p:OPF} discussed in the above proof of Proposition \ref{prop:contpiece} in \Cref{sec:combinedProp}}
\end{proof}

\begin{corollary} \label{corr:unique}
    Under assumptions of primal and dual solution uniqueness of the \ref{p:OPF}, $f_i$ is differentiable.
\end{corollary}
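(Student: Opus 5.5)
The plan is to write $f_i(\alpha)=E(P^D+\alpha e_i)$ as the optimal value of one linear program in which $\alpha$ enters only through a coefficient matrix, and then apply \Cref{thm:freund}. First I would put the \ref{p:OPF} into the standard form of \Cref{thm:freund}. The variables are the stacked vector of primal quantities $(P^G,\theta,f)$ and the DC-OPF duals $(\pi,z,\rho^\pm,\gamma^\pm)$. Free variables are split into differences of nonnegative parts, sign-constrained variables are flipped where needed, and slacks are added to the inequalities. In this form $\alpha$ appears in two places. It appears in the right-hand side of the flow-balance rows \eqref{eq:cm:fb}, through $P^D_i+\alpha$. It also appears in the coefficient of $\pi_i$ in the strong-duality row \eqref{eq:cm:opt}, through $-\pi_i(P^D_i+\alpha)$. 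To make the perturbation purely matricial, I would introduce an auxiliary variable $t$ fixed by the constraint $t=1$. The right-hand side $P^D_i+\alpha$ is then replaced by the term $(P^D_i+\alpha)t$ on the left-hand side. After this step the perturbation is $A\mapsto A+\alpha G$, where $G$ has exactly two nonzero entries: one in row \eqref{eq:cm:fb}$_i$, column $t$, and one in row \eqref{eq:cm:opt}, column $\pi_i^{+}-\pi_i^{-}$.

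Next I would use the uniqueness hypothesis. Suppose the primal optimum $x^*(\alpha)$ and the dual optimum $p^*(\alpha)$ of this LP are unique at a point $\alpha$. Then \Cref{thm:freund} gives $f_i'(\alpha)=-(p^*)^TGx^*$. Writing this out with the paper's multiplier names gives $f_i'(\alpha)=p_{\pi,i}-p_o\pi_i^*$, up to sign conventions. The right-hand side is a single well-defined number, so $f_i$ is differentiable at $\alpha$.

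The main obstacle is that \Cref{thm:freund}, as stated, gives a single derivative, while the appendix version is directional. Without uniqueness, the one-sided derivatives are the extremes of $-(p^*)^TGx^*$ over the primal and dual optimal sets, taken as a min or a max depending on the direction. I would make this precise in two steps. First, by \Cref{prop:contpiece}, $f_i$ is piecewise linear and continuous, so at any $\alpha$ it has left and right derivatives. Second, the directional form of the lemma expresses each of these one-sided derivatives as an optimum of the bilinear form $-(p)^TGx$ over the product of the optimal faces. Under uniqueness each face is a single point, so the left and right derivatives coincide and $f_i$ is differentiable.

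I would also have to check that the reformulation with $t$ does not break uniqueness, and it does not, since $t$ is fixed to $1$. Finally, the statement should be read as applying at those $\alpha$ in the interior of $D_i$ where uniqueness holds, with endpoints of $D_i$ handled by one-sided derivatives.
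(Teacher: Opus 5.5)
\textbf{There is a genuine gap: the step where the left and right derivatives are shown to agree is not established.}

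Your first argument is circular. \Cref{thm:freund} supplies a formula for $z'(\alpha)$ but not the existence of $z'(\alpha)$, so ``the right-hand side is a single number'' proves nothing about differentiability. The paper in fact runs the dependency the other way: it uses this corollary to license \Cref{thm:freund} in \Cref{thm:LME}, and otherwise treats the corollary as a consequence of the basis structure behind \Cref{prop:contpiece}.

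Your repair rests on a misreading of \Cref{thm:envelope}. That lemma does not describe the one-sided derivatives as extremes of $-p^TGx$ over the optimal faces. It only says $z'(\alpha)^+=-p^TGx$ for a triple $(x,p,\lambda)$ that is the limit of optimal triples at some $\alpha^{(i)}\downarrow\alpha$. It \emph{assumes} that such a convergent sequence exists. To conclude that the left and right derivatives agree, you must show that optimal primal--dual pairs at $\alpha'\to\alpha^{+}$ and at $\alpha'\to\alpha^{-}$ exist and converge to the unique pair at $\alpha$, and you never do. Under a matrix perturbation this is exactly the nontrivial step, since bases can become singular in the limit and multipliers can blow up.

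In this LP it is especially delicate. Because \eqref{eq:cm:opt} is tight at every feasible point (by weak duality), you can shift $p_o$ by $t$ and the remaining multipliers by $t$ times a DC-OPF primal--dual optimal pair (with appropriate signs) without losing dual optimality. So the dual optimal set always contains a ray. The extremal (min--max) formula you invoke for the non-unique case needs compact multiplier sets and is not available here. The same ray shows that dual uniqueness of the \ref{p:OPF}, read literally, never holds.

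The missing idea is basis stability, which is what Freund's result actually rests on. The argument runs as follows:
\begin{itemize}
\item Unique primal and dual optima are strictly complementary, so the optimal basis $B$ is unique and is both primal and dual nondegenerate.
\item By continuity, $A_B+\alpha'G_B$ stays nonsingular for $\alpha'$ near $\alpha$, and the strict inequalities ($x_B>0$, positive reduced costs) persist.
\item Hence $z(\alpha')=c_B^T(A_B+\alpha'G_B)^{-1}b$ on a neighborhood of $\alpha$. This is a rational and therefore differentiable function, and no one-sided limiting argument is needed.
\end{itemize}
This is also the implicit reason the paper can call the statement a corollary of \Cref{prop:contpiece}: at a breakpoint the optimal basis must change, and uniqueness rules that out.

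Separately, your claim that the reformulation preserves uniqueness is false. You split free variables such as $\pi$, $\theta$, $z$ into differences of nonnegative parts. Then $(\pi_i^++s,\pi_i^-+s)$ is optimal for every $s\ge 0$, so the standard-form LP to which you apply \Cref{thm:freund} never has a unique primal optimum. Checking only the auxiliary variable $t$ is not enough. You should run the basis argument on the original general-form LP, where free variables are left unsplit.
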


    However, while $f_i(\alpha)$ does enjoy continuity and piecewise linear properties, we note that we are unable to make guarantees regarding its convexity/concavity. This can be shown by counterexample through considering a simple single node, copper plate instance where we have three generation resources: renewable (cheapest cost, low carbon), coal (medium cost, high carbon), and natural gas (highest cost, medium carbon).
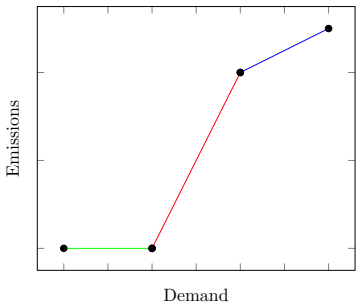
\begin{figure}[H]
    \centering
    \resizebox{0.3\textwidth}{!}{
        \begin{tikzpicture}
\begin{axis}[
yticklabel=\empty, ylabel=Emissions,
xticklabel=\empty, xlabel=Demand
]
\addplot+ [
sharp plot,
color=green,
mark=*,
mark size=2pt,
mark options={fill=black,color=black}
] coordinates {
(0,0) (1,0)
};
\addplot+ [
sharp plot,
color=red,
mark=*,
mark size=2pt,
mark options={fill=black,color=black}
] coordinates {
(1,0) (2,2)
};
\addplot+ [
sharp plot,
color=blue,
mark=*,
mark size=2pt,
mark options={fill=black,color=black}
] coordinates {
(2,2) (3,2.5)
};
\end{axis}
\end{tikzpicture}
        }
    \caption{Example emissions function displaying both local concavity and convexity. Green represents the section where the renewable plant is on the margin, red where coal is on the margin, and blue where natural gas is on the margin.}
    \label{fig:emissionsfunc}
\end{figure}
\Cref{fig:emissionsfunc} also highlights a second key property of the emissions function - its differentiability (or lackthereof). In this simple example, at the junctions between between marginal generation resources we see that the rate of change differs depending on the direction of perturbation. This will become an important consideration as we apply the lens of LMEs in the following subsection.

\subsection{LME Characterization}\label{sec:lmes}
We follow previous notions of locational marginal emissions and conceptually define the nodal LME to be the change in total system emissions with respect to an increase in nodal demand. 

\begin{definition}[Classical LMEs]\label{def:lme}
\begin{equation} 
    LME_i:=\frac{d E(P^D)}{d P^D_i}=f_i'(0),
\end{equation}
\end{definition}

Applying Corollary \ref{corr:unique} allows us to observe that such an LME must always exist under assumptions of primal and dual solution uniqueness for the \ref{p:OPF}. 
We can thus use the description of linear program perturbation analysis of Lemma \ref{thm:freund} to develop \Cref{thm:LME}.

\begin{theorem} \label{thm:LME}
    Under assumptions of primal and dual solution uniqueness to the \ref{p:OPF}
    \begin{align}
        \frac{d E(P^D)}{d P^D_i}=p_{\pi,i}+p_o\pi_i,
    \end{align}
    where $\pi_i$ refers to the optimal primal variable within the \ref{p:OPF}, $p_{\pi,i}$ refers to the optimal Lagrangian dual variable corresponding to \eqref{eq:cm:fb} for node $i$, and $p_{o}$ refers to the optimal Lagrangian dual variable corresponding to \eqref{eq:cm:opt}.
\end{theorem}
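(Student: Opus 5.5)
The plan is to write the combined model as one linear program in the joint variables $x=(P^G,\theta,f,\pi,z,\rho^\pm,\gamma^\pm)$, so that $E(P^D+\alpha e_i)=f_i(\alpha)$ is its optimal value under a perturbation. The perturbation $\alpha e_i$ enters in exactly two places. First, it enters the right-hand side of the flow balance constraint \eqref{eq:cm:fb} at node $i$, where $-P^D_i$ becomes $-P^D_i-\alpha$. Second, it enters the coefficient of the decision variable $\pi_i$ in the strong duality constraint \eqref{eq:cm:opt}, where $-\pi^TP^D$ becomes $-\pi^TP^D-\alpha\pi_i$. By Corollary \ref{corr:unique}, $f_i$ is differentiable at $0$ under the uniqueness assumptions. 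Therefore $f_i'(0)$ equals the sum of the two partial sensitivities, and I will compute each one separately.

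\textbf{Right-hand side term.} The constraint \eqref{eq:cm:fb} can be read as $\sum_g P^G_g-\dots=P^D_i$. Standard LP sensitivity, under the paper's sign convention for the multiplier $[p_{\pi,i}]$, then gives that the derivative of the optimal value with respect to $P^D_i$ through the right-hand side is $p_{\pi,i}$. Uniqueness of the dual solution makes this a genuine derivative rather than a subgradient.

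\textbf{Matrix-coefficient term.} To use Lemma \ref{thm:freund}, I will convert the combined model to the standard form $Ax=b$, $x\ge 0$. This requires splitting free variables into differences of nonnegative parts, sign-flipping $\rho^+,\gamma^+$, and adding slacks to the inequalities, including \eqref{eq:cm:opt} with its multiplier $p_o$. In this form the perturbation of $A$ is $\alpha G$, where $G$ has a single nonzero entry, $-1$, located in row \eqref{eq:cm:opt} and column $\pi_i$. Since $\pi_i=\pi_i^+-\pi_i^-$, there are two such entries, and their contributions combine to give a term in $\pi_i$. Lemma \ref{thm:freund} then gives the contribution $-(p^*)^TGx^*=-p_o\cdot(-1)\cdot\pi_i=p_o\pi_i$. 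Adding the two contributions, which is justified since both perturbations are driven by the same scalar $\alpha$ and the value function is differentiable, yields $p_{\pi,i}+p_o\pi_i$.

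\textbf{Main obstacle.} The delicate step is the bookkeeping of signs. The inequality \eqref{eq:cm:opt} is of the $\le 0$ type, so its dual $p_o$ has a definite sign in a minimization problem. Lemma \ref{thm:freund}'s formula is stated for equality form. I must check that $p_o$ as used in the paper equals (up to the paper's convention) the standard-form multiplier of the slacked row, and that the conventions for $p_{\pi,i}$ and $p_o$ agree, so that both terms come out with a plus sign. I will fix a single convention, namely that the Lagrangian is objective minus $p^T(Ax-b)$, consistent with the lemma's $z'=-p^TGx$. I will then verify the right-hand side term in the same convention as $\partial z/\partial b=p$.

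A second subtlety is that simultaneous perturbation of $b$ and $A$ is not literally covered by Lemma \ref{thm:freund}. I will handle it by homogenizing: introduce an auxiliary variable $t$ fixed by the constraint $t=1$, and write the right-hand side $P^D_i$ as the coefficient of $t$. The whole perturbation is then a matrix perturbation $G$ with entries in the $t$ column (row \eqref{eq:cm:fb}) and the $\pi_i$ column (row \eqref{eq:cm:opt}). Applying the lemma once gives $-(p^*)^TGx^*$, which expands to $p_{\pi,i}\cdot t^*+p_o\pi_i$ with $t^*=1$, giving the claimed formula directly.
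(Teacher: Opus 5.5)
Your proposal is correct and takes essentially the paper's route. The paper gives no written-out proof, but it obtains the result by invoking Corollary~\ref{corr:unique} for differentiability. It then reads off the right-hand-side sensitivity $p_{\pi,i}$ of \eqref{eq:cm:fb} and adds the Lemma~\ref{thm:freund} coefficient sensitivity $p_o\pi_i$ of \eqref{eq:cm:opt}, under the same Lagrangian sign convention visible in \eqref{eq:dual:pg}. Your homogenization with an auxiliary variable fixed at $t=1$ folds both perturbations into one application of Lemma~\ref{thm:freund}, which rigorously justifies adding the two contributions, a step the paper leaves implicit.
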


Intuitively, one would expect that when assumptions of primal and dual solution uniqueness are met for the \ref{p:OPF}, the previously derived LME equation from \cite{ruiz2010analysis} and \Cref{thm:LME} should agree as the set $X^*(P^D)$ will be described by a singleton with a corresponding singleton set of optimal dual solutions and so adding the additional carbon-aware objective cannot change the dispatch description. Indeed, in \Cref{s:CombinedEQ} of \Cref{sec:combinedProp} we show
\begin{corollary}
    Under assumptions of primal and dual solution uniqueness to the \ref{p:dcOPF}, 
    \begin{equation}
    p_{\pi,i}+p_o\pi_i = \sigma^T \begin{bmatrix}
    \vec{1} \mid
    \tilde{\Psi}^T
\end{bmatrix}^{-1} \begin{bmatrix}
    \vec{1} \\
    \tilde{\psi}_i
\end{bmatrix}.
\end{equation}
Thus, \Cref{thm:LME} is equivalent to the calculation methodology proposed in \cite{ruiz2010analysis}.
\end{corollary}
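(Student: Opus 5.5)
Under primal and dual uniqueness of the \ref{p:dcOPF}, the set $X^*(P^D)$ is a singleton $\{(P^G,f,\theta)\}$ for all demands near $P^D$. So the \ref{p:OPF} reduces to evaluating $\sigma^T P^G(P^D)$, and $E$ is differentiable. My plan is to compute $\frac{dE}{dP^D_i}$ in two ways and equate them. The first way is \Cref{thm:LME}, which already gives $p_{\pi,i}+p_o\pi_i$. So the work is to show that the same derivative equals the Ruiz--Conejo expression $\sigma^T [\vec{1}\mid\tilde\Psi^T]^{-1}[\vec{1};\tilde\psi_i]$.

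\textbf{Step 1: local invariance of the optimal basis.} Uniqueness and nondegeneracy of the \ref{p:dcOPF} mean there is a single optimal basis, and it stays optimal for all small perturbations $P^D+\alpha e_i$. Within this neighborhood:
\begin{itemize}
\item the nonmarginal generators stay at their bounds $P^{\min}_g$ or $P^{\max}_g$;
\item the tight lines stay at $\pm f^{\max}_\ell$;
\item the dual prices $\pi,\rho,\gamma$ are constant.
\end{itemize}

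\textbf{Step 2: the linear system for the marginal generators.} I rewrite the network through the PTDF matrix, with flows $f=\Psi(\text{injections})$, and eliminate $\theta$ and $z$. The changes $\Delta P^G_M$ of the marginal generators must then satisfy two conditions:
\begin{itemize}
\item system balance, $\vec{1}^T\Delta P^G_M=\alpha$;
\item unchanged flow on every tight line, $\tilde\Psi\,\Delta P^G_M-\alpha\tilde\psi_i=0$.
\end{itemize}
Writing these with the sign and transposition conventions of the displayed formula gives the square system $[\vec{1}\mid\tilde\Psi^T]^T$-type. Nondegeneracy makes the number of marginal generators equal to one plus the number of tight lines, and the basis matrix is invertible. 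Hence $\Delta P^G_M=\alpha[\vec{1}\mid\tilde\Psi^T]^{-1}[\vec{1};\tilde\psi_i]$. Since the other generators do not move, $\frac{d}{d\alpha}\sigma^TP^G$ is exactly the claimed right-hand side, with $\sigma$ understood as restricted to the marginal units.

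\textbf{Step 3: connecting to the dual expression.} The \ref{p:OPF} has a unique primal solution, namely the \ref{p:dcOPF} solution together with its unique dual $(\pi,\rho,\gamma,z)$. It also has, by hypothesis, unique multipliers $p$. \Cref{thm:LME} therefore applies and gives $\frac{dE}{dP^D_i}=p_{\pi,i}+p_o\pi_i$. Both quantities are the same derivative $f_i'(0)$, so they are equal.

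As a sanity check, and as an alternative direct route, I would verify the identity through the KKT system of the \ref{p:OPF}. The stationarity conditions in $P^G$ on the marginal generators and in $f$ on the tight lines reproduce the transpose of the Step 2 system, so $p_\pi+p_o\pi$ solves the adjoint equation whose solution is the displayed expression.

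\textbf{Main obstacle.} I expect the hardest part to be justifying the hypotheses of \Cref{thm:LME}: the statement assumes uniqueness for the \ref{p:dcOPF}, but \Cref{thm:LME} needs dual uniqueness of the \ref{p:OPF}. The multipliers $(p,p_o)$ can be non-unique because the strong-duality constraint \eqref{eq:cm:opt} is always degenerate: it holds with equality, and it is implied by the other constraints. My first attempt would be to avoid this by arguing directly with Step 2 and Corollary~\ref{corr:unique}. I would show that \emph{every} KKT multiplier vector of the \ref{p:OPF} gives the same value of $p_{\pi,i}+p_o\pi_i$, by showing that this combination equals the derivative of $E$ obtained from Lemma~\ref{thm:freund}, applied along the perturbation direction in which $E$ is already known to be differentiable.
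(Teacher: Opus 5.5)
\paragraph{The gap is in Step 3.} The corollary assumes uniqueness for the \ref{p:dcOPF}, not for the \ref{p:OPF}, so ``by hypothesis, unique multipliers $p$'' is not available. As your last paragraph suspects, it is in fact false. Use the standard-form notation of \Cref{sec:combinedProp}, and take the multiplier of $A^Tp\le c$ to be nonpositive, as for the paper's other $\le$-constraints. If $(\pi_O,\pi_D,\pi_P)$ is an optimal multiplier of the combined problem, then so is $(\pi_O+s,\ \pi_D+s x^*,\ \pi_P-s p)$ for every $s\le 0$. Indeed, $c-A^Tp\ge 0$ preserves dual feasibility, and $c^Tx^*=b^Tp$ leaves the dual objective unchanged.

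So the multipliers of the \ref{p:OPF} are never unique, and you cannot invoke \Cref{thm:LME} or Corollary \ref{corr:unique}. Differentiability of $E$ does follow from your Step 1, but that is not the issue. What remains unproved is the identity $dE/dP^D_i=p_{\pi,i}+p_o\pi_i$, which is the whole content of the corollary once the classical Step 2 is granted. Your preferred repair via \Cref{thm:freund} does not close this. With a non-unique dual, that lemma does not say which multipliers it applies to: its proof, \Cref{thm:envelope}, needs the multiplier to be a limit of optimal multipliers of the perturbed problems. Checking that for every multiplier requires an explicit description of the multiplier set, and once you have that description no derivative is needed.

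\paragraph{The fix: argue on the dual side.} Your ``sanity check'' already points this way, and it is what the paper does; make it the main proof and drop \Cref{thm:LME}. Primal and dual uniqueness of the \ref{p:dcOPF} give, by strict complementarity, a unique optimal basis $B$ with $x^*_B>0$ and strictly positive reduced costs off $B$. Complementary slackness in the combined LP then does two things.
\begin{itemize}
\item It forces $\pi_D$ to vanish off $B$, so $A\pi_D=b\pi_O$ gives $(\pi_D)_B=\pi_O x^*_B$.
\item It forces $A_B^T\pi_P=\hat c_B-\pi_O c_B$.
\end{itemize}
Hence every optimal multiplier lies on the one-parameter family above, and on it $\pi_O p+\pi_P=(A_B^T)^{-1}\hat c_B$, independent of $\pi_O$.

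The paper's proof works with one explicit multiplier of this form: $\pi_O=1$, $\pi_D=x^*$, $\pi_P=(A_B^T)^{-1}(\hat c-c)$. The value of $\pi_O$ is immaterial since it cancels; what matters is that an admissible value exists, which the strictly positive reduced costs guarantee. The paper checks zero duality gap and reads off $((A_B^T)^{-1}\hat c)_i$ as the basis-inverse sensitivity, with no derivatives at all.

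In your node/line coordinates, set $\lambda=p_\pi+p_o\pi$.
\begin{itemize}
\item Stationarity in $P^G_g$, with $p^\pm_{\gamma,g}=0$ and $c_g=\pi_g$ on marginal units, gives $\lambda_g=\sigma_g$.
\item Stationarity in $f$ and $\theta$, after adding $p_o$ times the \ref{p:dcOPF} dual constraints satisfied by $(\pi,z,\rho)$, gives $\lambda=\lambda_0\vec{1}+\Psi^T\mu$ with $\mu$ supported on tight lines. The restriction comes from the non-tight lines, where both layers' flow multipliers vanish, not from the tight ones as you wrote.
\item Invertibility of $[\vec{1}\mid\tilde\Psi^T]$ then pins down $\lambda$.
\end{itemize}
This yields the identity for every optimal multiplier, which is what Definition \ref{def:extlme} needs. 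Your Step 2 usefully supplies the identification with the PTDF formula, which the paper only asserts.

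One slip in Step 2: your system is $[\vec{1}\mid\tilde\Psi^T]^T\Delta P^G_M=\alpha[\vec{1};\tilde\psi_i]$. Its solution therefore involves the inverse transpose, and the scalar you obtain is $[\vec{1};\tilde\psi_i]^T[\vec{1}\mid\tilde\Psi^T]^{-1}\sigma_M$. The displayed formula has to be read in that sense.
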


However, as described by \Cref{prop:contpiece}, the function of system emissions is continuous but need not always be smooth. In such cases where we lack primal and dual solution uniqueness and increases/decreases in nodal demand correspond to different changes in emissions, the derivative of the emissions function will be undefined. 

Such issues of non-differentiability have also been seen in locational marginal pricing, where in instances of the \ref{p:dcOPF} which lack dual solution uniqueness, the economic cost function may be non differentiable. In such cases, there are many possible pricing schemes which lead to a market clearing equilibrium. Indeed, it can be shown that any marginal pricing scheme defined using an optimal dual solution to the \ref{p:dcOPF} will support a market clearing equilibrium, leading to the convention of considering the locational marginal price to be defined as any optimal dual solution corresponding to \eqref{eq:dcopf:fb} \cite{hogan2012multiple}. 

Following such a procedure, we can use \Cref{thm:LME} to extend Definition \ref{def:lme} to relate to any optimal set of primal and dual variables for the combined model, leading to 
\begin{definition}[Extended LMEs]\label{def:extlme}
For a fixed pair of optimal primal and dual solutions to the \ref{p:OPF}, the LME at each node is given by
\begin{equation} 
    LME_i:=p_{\pi,i}+p_o\pi_i.
\end{equation}
\end{definition}
Just as in the case of locational marginal pricing, in \Cref{sec:dec-eq} we show that such a definition of LMEs retains the same market clearing and additivity properties without the assumption of differentiability.

\subsection{Carbon Accounting with LMEs} \label{sec:accounting}

The ability of LMEs to accurately evaluate marginal actions makes them well positioned for use in an accounting scheme which encompasses both consumer allocations and market-based mechanisms. We develop LME-based accounting through considering each grid dispatch property to be an independent action and using marginal rates to evaluate the effect of such actions. We inform our approach using classical locational marginal pricing accounts and motivate the emissions ``mirrored'' version to reflect the economic approach.

\begin{remark}[Abuse of Notation]
    In the following sections it is useful to reference properties of the nodes at which generators are situated to define generator accounts and properties. To do so conveniently we define a nodal property subscripted by a generator to refer to the property of the node at which the generator is located, for example: $LME_g := LME_i \mid g \in G_i$.
\end{remark} 

\paragraph{Load: } Under locational marginal pricing, grid operators evaluate the total consumption account at its marginal rate, multiplying the locational marginal price by the load size to determine the total load account. Similarly, for LMEs we evaluate the total load at its marginal emission rate:
\begin{equation}
    LME_iP^D_i \label{eq:acc:load}.
\end{equation}
\paragraph{Generator: } 
Grid operators compensate generators at their node's marginal rate (LMP multiplied by generation quantity). However, the total economic account must also factor in production costs. In the LME framework, this manifests as generators receiving a credit (negative cost) for total generation evaluated at the marginal emissions rate, offset by the direct emissions cost associated with production:
\begin{equation}
    (\sigma_g-LME_g)P^G_g \label{eq:acc:gen}.
\end{equation}

\paragraph{Transmission: } 
The final grid entity is that of transmission. In marginal pricing, each transmission line has an associated quantity known as ''congestion rent'' which reflects the shadow price associated with a given line's flow limit. The associated transmission economic accounts are then defined by evaluating a line's total flow at its congestion rent rate. 

Thus, to analyze transmission resources we introduce the notion of shadow carbon intensity, or $SCI_\ell$, which describes how the system emissions would change with respect to an increase in a line's power limit. Combining our formulation of the \ref{p:OPF} and \Cref{thm:freund} yields: 
\begin{equation}
    SCI_\ell := p^+_{\rho,\ell}-p^-_{\rho,\ell}+p_o(\rho^+_\ell-\rho^-_\ell)
\end{equation} where $p^-_{\rho,\ell}, p^+_{\rho,\ell}$ refer to the dual value of \eqref{eq:cm:fm} and \eqref{eq:cm:fp} at line $\ell$ respectively. Evaluating the total magnitude of line flow at this rate leads to
\begin{equation}
    SCI_\ell |f_\ell| \label{eq:acc:line}.
\end{equation} 

Note that any component's carbon account can be both positive or negative depending on the component's immediate effect on the system. Renewable or low carbon generation resources typically posses negative carbon accounts as the nodal LME will likely be greater than their generation carbon intensity, while dirtier base-load generation such as coal will likely posses positive carbon accounts. It should also be noted that when renewables make up a part of the marginal generation source, as happens in periods of high congestion and curtailment, some nodes will have LMEs of 0. Loads typically have positive carbon accounts, although loads can be located at nodes which exhibit zero or negative valued LMEs if renewables are on the margin or if line congestion is such that increasing consumption as a node would enable the dispatch to shift towards lower carbon resources. Finally, transmission accounts will typically be zero valued as most lines should be generally uncongested, however in the case of congestion, transmission accounts can take either positive or negative values depending on if the congestion is limiting dirty or clean dispatch. 

Under such an accounting scheme, consumers with positive carbon accounts may seek to lower their emissions responsibility through the purchase of negative credits from other accounts such as renewable generation resources, or through undertaking projects which will generate negative carbon accounts such as building new renewable generation in high LME areas or increasing the transmission capacity of transmission lines with highly negative shadow carbon intensities.

\begin{remark}[Developing Intuition]
    For further intuition behind the meaning of LMEs and their associated carbon accounts, we provide a set of simplified examples in \Cref{sec:intuition} which describe the implementation of such an LME accounting scheme and provide insight behind the account breakdown and meanings.
\end{remark}

\section{The Fundamental Theorems of Two-Layered Mechanisms}\label{sec:dec-eq}

In this section, we examine the compatibility between the centralized optimization performed by the system operator and the decentralized profit-seeking behavior of individual market participants. In the context of modern power system markets, locational marginal prices (LMPs) serve as the coordinating signal which enables this support: they align the generator level objective of maximizing generator profit with the operator level objective of minimizing total system cost. We show that the LME-based carbon accounts align with the LMP-based economic accounting signals, enabling continued decentralized support for operator level objectives.
Finally, we show that markets resulting from LME-based carbon accounts accurately capture total system emissions, validating that the carbon footprint theorem holds under our proposed definition and calculation of LMEs.



\subsection{The Decentralized Objective: Defining ``Carbon Profit''}
One can consider the formulation of the \ref{p:OPF} as satisfying the \emph{secondary} objective of emissions management while fully enforcing the \emph{primary} objective of economic efficiency. Similarly, we can consider a two-tiered decentralized problem where each generator seeks to satisfy the \emph{primary} objective of minimizing its traditional economic cost account, i.e.,\footnote{where $LMP_i=\pi_i$ is the marginal price of increasing demand at node $i$ in the \ref{p:dcOPF}}
\begin{equation}
    P^{G*}_g := \text{argmin}_{P^{min}_g \leq P^G_g \leq P^{max}_g} (c_g-LMP_g)P^G_g,  \label{eq:gencost-dec}
\end{equation}
and additionally, within the set of dispatch solutions satisfying economic optimality of the first objective, the generators seek to minimize the \emph{secondary} objective of their carbon accounts:
\begin{equation}
    \min_{P^G_g \in P^{G*}_g}(\sigma_g-LME_g) P^G_g. \label{eq:gen-dec} 
\end{equation}

\begin{remark}[Aggregate Notation]
    In the following analyses, it is useful to refer to an aggregate of optimal solutions to the decentralized equations. Let $P^G$ denote a vector where each component $P^G_g$ is an optimal solution to the decentralized problem of the $gth$ generator. Furthermore, any complete vector of generation values summing in total to the overall demand can be shown to have a unique ``induced'' vector of $f$ and $\theta$ values which create a valid electric flow respecting \eqref{eq:dcopf:fb} and \eqref{eq:dcopf:angle}. We refer to these as the induced flows and voltage angles corresponding to a generation vector.
\end{remark}

\subsection{The First Theorem (Sufficiency): Decentralized Carbon Profit Maximization Leads to Minimized System Emissions} 
\begin{theorem} \label{thm:fft}
    Let $\hat{LME}=\hat{p}_{\pi}+\hat{p}_o\hat{\pi}$ and $\hat{LMP}=\hat{\pi}$ be LME and LMP values derived from an arbitrary optimal pair of primal and dual solutions to the \ref{p:OPF} denoted by $\hat{P}^G$,$\hat{f}$, $\hat{\theta}$, $\hat{\pi}$, $\hat{\gamma}^+$, $\hat{\gamma}^-$,$\hat{\rho}^+$, $\hat{\rho}^-$, $\hat{z}$, and $\hat{p}$. Any vector, $P^{G}$, of optimal solutions to the decentralized generation problem for each generator \eqref{eq:gen-dec} given pricing parameters $\hat{LME}$ and $\hat{LMP}$ whose induced flows and voltage angles, $f$ and $\theta$, satisfies transmission \eqref{eq:dcopf:angle} 
    \eqref{eq:dcopf:fmax}
    \eqref{eq:dcopf:fmin} and flow balance \eqref{eq:dcopf:fb} constraints defines an assignment of decision variables in an optimal solution to \ref{p:OPF}.
\end{theorem}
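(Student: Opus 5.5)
We need to show two things about the decentralized vector $P^G$ with its induced $f,\theta$: first, that $(P^G,f,\theta)\in X^*(P^D)$, i.e., it is cost-optimal for the \ref{p:dcOPF}; second, that it minimizes $\sigma^TP^G$ over $X^*(P^D)$. The plan is to certify both through duality. For the first layer we use $(\hat\pi,\hat\gamma^\pm,\hat\rho^\pm,\hat z)$ as a dual certificate. For the second layer we use the multipliers $\hat p$ of the \ref{p:OPF}, which are combined with the first-layer duals through $\hat{LME}=\hat p_\pi+\hat p_o\hat\pi$.

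\textbf{Step 1 (economic optimality).} Each generator solves \eqref{eq:gencost-dec} with prices $\hat\pi$. Since $(\hat\pi,\hat\gamma^\pm)$ satisfy the dual constraints $\hat\gamma^+_g+\hat\gamma^-_g+\hat\pi_g=c_g$ with $\hat\gamma^+\le0\le\hat\gamma^-$, optimality in \eqref{eq:gencost-dec} is equivalent to complementary slackness with $\hat\gamma^\pm_g$. That is, $\hat\gamma^+_g<0\Rightarrow P^G_g=P^{max}_g$ and $\hat\gamma^-_g>0\Rightarrow P^G_g=P^{min}_g$.

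The transmission side is the delicate part. The induced flow $f$ is feasible by hypothesis, but we also need $\hat\rho^\pm$ complementary to $f$, or equivalently that the objective inequality \eqref{eq:cm:opt} holds. To obtain this, we compute $c^TP^G-\hat\pi^TP^D$ and substitute flow balance \eqref{eq:dcopf:fb} together with the dual line equations. Using $\sum_i\hat\pi_iP^D_i=\sum_g\hat\pi_gP^G_g-\sum_\ell(\hat\pi_i-\hat\pi_j)f_\ell$, the identity $\hat\pi_i-\hat\pi_j=\hat\rho^+_\ell+\hat\rho^-_\ell-\hat z_\ell$, and the fact that $\sum_\ell\hat z_\ell f_\ell=\sum_\ell \hat z_\ell B_\ell(\theta_i-\theta_j)=0$ by the nodal $z$ equation, we get
\[c^TP^G-\hat\pi^TP^D=\sum_g(\hat\gamma^+_g+\hat\gamma^-_g)P^G_g+\sum_\ell(\hat\rho^+_\ell+\hat\rho^-_\ell)f_\ell .\]
The generator terms equal $\hat\gamma^{+T}P^{max}+\hat\gamma^{-T}P^{min}$ by Step 1. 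Each line term satisfies $\hat\rho^+_\ell f_\ell\le \hat\rho^+_\ell f^{max}_\ell$ because $\hat\rho^+\le0$ and $f\le f^{max}$, and similarly $\hat\rho^-_\ell f_\ell\le -\hat\rho^-_\ell f^{max}_\ell$. Hence \eqref{eq:cm:opt} holds, and by weak duality it holds with equality, so $(P^G,f,\theta)\in X^*(P^D)$ and it is jointly feasible with the fixed dual part. I expect this line-term bookkeeping, including the signs of $f^{max}$ coefficients in \eqref{eq:cm:opt}, to be the main obstacle; carrying it out also shows that line complementary slackness is automatic.

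\textbf{Step 2 (emission optimality).} The \ref{p:OPF} is bilinear in $(\pi,P^D)$. However, once $P^D$ is fixed and the dual block is fixed at the hatted values, the combined model restricted to the primal variables is an LP. We use the Lagrangian with multipliers $\hat p$: the stationarity conditions of the \ref{p:OPF} with respect to $P^G_g$ read $\sigma_g-\hat p_{\pi,g}-\hat p_o c_g-\hat p^+_{\gamma,g}-\hat p^-_{\gamma,g}=0$, up to the paper's sign conventions.

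Next we rewrite $\sigma_g-\hat{LME}_g=\sigma_g-\hat p_{\pi,g}-\hat p_o\hat\pi_g=\hat p_o(c_g-\hat\pi_g)+\hat p^\pm_{\gamma,g}$. On the set $P^{G*}_g$, the term $(c_g-\hat\pi_g)P^G_g$ is constant. So \eqref{eq:gen-dec} is exactly the generator-wise minimization of the Lagrangian terms $\hat p^\pm_{\gamma,g}P^G_g$ over the economically optimal interval, and optimality there gives complementary slackness between $\hat p^\pm_{\gamma,g}$ and the bounds.

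For the lines and the strong-duality row, $\hat p^\pm_\rho$ and $\hat p_o$ must be complementary with the new $f$. For $\hat p_o$ this follows from Step 1, since \eqref{eq:cm:opt} is tight. For lines we argue as in Step 1: we write $\sigma^TP^G$ via the Lagrangian identity, use the stationarity in $f,\theta$ to express $\sum_i\hat p_{\pi,i}P^D_i$, and bound the line terms by sign. The result is $\sigma^TP^G\le\sigma^T\hat P^G$. Combined with feasibility from Step 1, this gives optimality for the \ref{p:OPF}, with $\hat\pi,\hat\gamma,\hat\rho,\hat z$ completing the assignment.
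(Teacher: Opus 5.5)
\textbf{Verdict: there is a genuine gap, and it sits exactly at the step you flagged.} The line-term bound in Step 1 is reversed, and the same move at the end of Step 2 fails the same way. Your identity for $c^TP^G-\hat\pi^TP^D$ is correct. But $\hat\rho^+_\ell\le0$ and $f_\ell\le f^{max}_\ell$ give $\hat\rho^+_\ell f_\ell\ge\hat\rho^+_\ell f^{max}_\ell$, not $\le$; likewise $\hat\rho^-_\ell f_\ell\ge-\hat\rho^-_\ell f^{max}_\ell$. Once the generator terms cancel, the left side of \eqref{eq:cm:opt} at $(P^G,\hat\pi,\hat\gamma^\pm,\hat\rho^\pm)$ equals
\[
\sum_{\ell\in L}\Big[\hat\rho^+_\ell\,(f_\ell-f^{max}_\ell)+\hat\rho^-_\ell\,(f_\ell+f^{max}_\ell)\Big]\ \ge\ 0 .
\]
This is just weak duality. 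So \eqref{eq:cm:opt} holds only if every term vanishes, i.e.\ only if each line with nonzero $\hat\rho^\pm_\ell$ stays at that limit under $f$. Nothing in the hypotheses forces this, because \eqref{eq:gencost-dec} and \eqref{eq:gen-dec} never see the flows; line complementary slackness is not automatic.

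The paper's own instance in \Cref{fig:2bus:trans} (zero minimum outputs, $c_1<c_2$) is a counterexample to the statement itself. There $\hat P^G=(100,150)$ is the unique optimum and $\hat\pi=(c_1,c_2)$. Both units are interior, so \eqref{eq:dual:pg} gives $\hat{LME}_g=\sigma_g$, i.e.\ $\hat{LME}=(0,1)$, for every optimal dual. Both generators therefore have zero coefficients in both decentralized layers. Hence $P^G=(50,200)$ with $f=50$ satisfies every hypothesis, yet it costs $50(c_2-c_1)=\hat\rho^+_\ell(f_\ell-f^{max}_\ell)$ more than the optimum.

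Step 2 fails the same way even if layer one is granted (so $c^T\Delta P^G=0$ and $c_g=\hat\pi_g$ wherever $\Delta P^G_g\neq0$). Your Lagrangian computation then gives
\[
\sigma^T\Delta P^G=\sum_{g\in G}(\sigma_g-\hat{LME}_g)\Delta P^G_g+\sum_{\ell\in L}\Big[\hat p^+_{\rho,\ell}(f_\ell-f^{max}_\ell)+\hat p^-_{\rho,\ell}(f_\ell+f^{max}_\ell)\Big].
\]
The line part is again $\ge0$, so the sign argument yields $\sigma^TP^G\ge\sigma^T\hat P^G$, not $\le$. A concrete case: take the same network with $c_1=c_2$ and a load of 150 at bus 2. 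Then $(50,100)$ is cost-optimal and decentralized-optimal, but it emits 100 against 50 at $(100,50)$.

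The paper's proof does not close this gap either.
\begin{itemize}
\item It takes cost-optimality of $P^G$ from \Cref{thm:lmp:sw}. For generator-only profit maximization that fails on the first example.
\item Its network identity $\sum_g\hat p_{\pi,g}\Delta P^G_g=\sum_{\ell=(i,j)}\Delta f_\ell(\hat p_{\pi,j}-\hat p_{\pi,i})$ has the opposite sign from what \eqref{eq:dcopf:fb} gives, since line $(i,j)$ enters node $i$ with $-f_\ell$. That sign slip is what lets \eqref{eq:fft_bound} point the convenient way.
\end{itemize}

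The statement needs an extra hypothesis, for example that $f$ is complementary to $\hat\rho^\pm$ and $\hat p^\pm_\rho$. This corresponds to a transmission or network-operator participant that also optimizes against the signals, as in the usual LMP sufficiency result. Under that hypothesis your plan works once the sign is corrected:
\begin{itemize}
\item both line sums vanish;
\item Step 1 gives equality in \eqref{eq:cm:opt};
\item Step 2 gives $\sigma^T\Delta P^G=\sum_g(\sigma_g-\hat{LME}_g)\Delta P^G_g\le0$ by \eqref{eq:gen-dec}.
\end{itemize}
Certifying layer one directly from $(\hat\pi,\hat\gamma^\pm,\hat\rho^\pm,\hat z)$ is then cleaner than the paper's appeal to \Cref{thm:lmp:sw}.
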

\begin{proof}

    We begin by making explicit a few useful constraints which must be satisfied by any feasible Lagrangian dual solution to the \ref{p:OPF}.
    \begin{align}
        &\hat{p}_{\pi,i}-\hat{p}_{\pi,j}-\hat{p}_{z,\ell} - \hat{p}^+_{\rho,\ell} - \hat{p}^-_{\rho,\ell} =0 ,&\forall \ell=(i,j)\in L, &~[f_\ell], \label{eq:dual:f}\\
        &\sum_{\ell = (i,j)}B_{\ell} \hat{p}_{z,\ell}-\sum_{\ell = (j,i)}B_{\ell}  \hat{p}_{z,\ell} =0 ,&\forall i \in N, &~[\theta_i], \label{eq:dual:voltage} \\
        &\sigma_g-\hat{p}_{\pi,g}-\hat{p}^+_{\gamma,g}-\hat{p}^-_{\gamma,g}-\hat{p}_o c_g=0, &\forall g \in G, &~[P^G_g]\label{eq:dual:pg}\\
        &\hat{p}^+_{\gamma}, 
        \hat{p}^+_{\rho} \leq 0, \label{eq:dual:plus}\\
        &\hat{p}^-_{\gamma}, 
        \hat{p}^-_{\rho} \geq 0.\label{eq:dual:minus}
    \end{align}
    
    We proceed with proof by contradiction. For the sake of contradiction, suppose \Cref{thm:fft} is false and $P^G$, $f$, $\theta$ is not an assignment of decision variables in any optimal solution to the \ref{p:OPF}. 
    
    \Cref{thm:lmp:sw} implies that $P^G, f, \theta$ is an optimal solution to the \ref{p:dcOPF} and thus in the feasible set of the \ref{p:OPF}. This feasibility implies it must be the case that $P^G$ is a less than optimal solution to the \ref{p:OPF}, i.e. $\sigma^T P^G  > \sigma^T \hat{P}^G$.  Consider $\Delta P^G = P^G-\hat{P}^G$ and note that from \Cref{thm:lmp:sw} it must be that $\hat{P}^G$ is a valid optimal generation set for \eqref{eq:gencost-dec} and thus both $P^G_g$ and $\hat{P}^G_g$ lie in the convex solution space of $P^{G*}_g$. The optimality conditions for \eqref{eq:gen-dec} and \eqref{eq:gencost-dec} lead to the following implications:
    \begin{align}
        \Delta P^G_i > 0 \implies \sigma_i \leq \hat{LME}_i = \hat{p}_{\pi,i}+\hat{p}_o\hat{\pi}_i = \hat{p}_{\pi,i}+\hat{p}_oc_i, \\
       \Delta P^G_i < 0 \implies \sigma_i \geq \hat{LME}_i = \hat{p}_{\pi,i}+\hat{p}_o\hat{\pi}_i = \hat{p}_{\pi,i}+\hat{p}_oc_i .
    \end{align}
     
    Combining to yield the inequality relation
    \begin{equation}
        \sigma^T P^G - \sigma^T \hat{P}^G = \sigma^T \Delta P^G \leq (\hat{p}_{\pi}+\hat{p}_o c)^T \Delta P^G = \hat{p}_{\pi}^T \Delta P^G.
    \end{equation}

    Now, consider the net line flow difference $\Delta f = f-\hat{f}$. Note that from the feasibility of both solutions to the \ref{p:dcOPF} and complementary slackness of \eqref{eq:cm:fp} and \eqref{eq:cm:fm}, we observe the following implications:
    \begin{align}
        \Delta f_\ell > 0 \implies \hat{p}^+_{\rho,\ell} = 0,\\
        \Delta f_\ell < 0 \implies \hat{p}^-_{\rho,\ell} = 0.
    \end{align}
    Combining the above implications and applying \eqref{eq:dual:f} where $\ell=(i,j)$ yields
    \begin{equation}
        \Delta f_\ell (\hat{p}_{\pi,j} - \hat{p}_{\pi,i}) = -\Delta f_\ell (\hat{p}_{z,\ell} + \hat{p}^+_{\rho,\ell} + \hat{p}^-_{\rho,\ell}) \leq -\Delta f_\ell \hat{p}_{z,\ell}. \label{eq:fft_bound}
    \end{equation}
 Finally, define $\Delta \theta = \theta-\hat{\theta}$. Using feasibility of $P^G$ and $\hat{P}^G$ to \eqref{eq:dcopf:angle} and \eqref{eq:dcopf:fb} along with \eqref{eq:dual:voltage}, we apply the above inequality to observe
    \begin{align}
       \sum_{i \in G} \Delta P^G_i \hat{p}_{\pi,i} &= \sum_{\ell=(i,j) \in L} \Delta f_\ell (\hat{p}_{\pi,j} - \hat{p}_{\pi,i}) \\
        &\leq -\sum_{\ell=(i,j) \in L} \Delta f_\ell \hat{p}_{z,\ell} \\
        &= -\sum_{\ell=(i,j) \in L} B_{i,j} (\Delta\theta_i-\Delta\theta_j) \hat{p}_{z,\ell} \\
        &= -\sum_{i \in N}(\Delta\theta_i)( \sum_{\ell = (i,j)}B_{\ell} \hat{p}_{z,\ell}-\sum_{\ell = (j,i)}B_{\ell}  \hat{p}_{z,\ell}) \\
        &= 0.
    \end{align}
However, from \eqref{eq:fft_bound} this contradicts the assumption that $\sigma^T P^G >\sigma^T \hat{P}^G$.
\end{proof}
\subsection{The Second Theorem (Necessity): Existence of the LME Signal}
\begin{theorem} \label{thm:sft}
    Let $\hat{LME}=\hat{p}_{\pi}+\hat{p}_o\hat{\pi}$ and $\hat{LMP}=\hat{\pi}$ be LME and LMP values derived from an arbitrary optimal pair of primal and dual solutions to the \ref{p:OPF} denoted by $\hat{P}^G$,$\hat{f}$, $\hat{\theta}$, $\hat{\pi}$, $\hat{\gamma}^+$, $\hat{\gamma}^-$,$\hat{\rho}^+$, $\hat{\rho}^-$, $\hat{z}$, and $\hat{p}$. Given pricing parameters $\hat{LME}$ and $\hat{LMP}$, $\hat{P}^G_g$ is an optimal solution to \eqref{eq:gen-dec} for any generator, $g$.
\end{theorem}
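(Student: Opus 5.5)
The plan is to verify the two layers of the decentralized problem separately, using only the KKT conditions of the \ref{p:OPF} at the fixed optimal primal--dual pair. The first layer is the economic problem \eqref{eq:gencost-dec}. Since $\hat{\pi},\hat{\gamma}^\pm,\hat{\rho}^\pm,\hat{z}$ enter the \ref{p:OPF} as primal variables constrained to be dual feasible for the \ref{p:dcOPF}, and \eqref{eq:cm:opt} together with weak duality forces strong duality, $(\hat{P}^G,\hat{f},\hat{\theta})$ and $(\hat{\pi},\dots)$ form an optimal primal--dual pair of the \ref{p:dcOPF}. The existence part of \Cref{thm:lmp:sw} then gives $\hat{P}^G_g\in P^{G*}_g$ for every $g$. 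One can also check this directly. From $\hat{\gamma}^+_g+\hat{\gamma}^-_g=c_g-\hat{\pi}_g$, the sign constraints, and complementary slackness (implied by strong duality), we get: $c_g<\hat{\pi}_g$ forces $\hat{P}^G_g=P^{max}_g$; $c_g>\hat{\pi}_g$ forces $\hat{P}^G_g=P^{min}_g$; and if $c_g=\hat{\pi}_g$, every feasible output is optimal. In particular, $P^{G*}_g$ is the whole box $[P^{min}_g,P^{max}_g]$ when $c_g=\hat{\pi}_g$, and the singleton $\{\hat P^G_g\}$ otherwise.

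The second layer is \eqref{eq:gen-dec}. When $P^{G*}_g$ is a singleton there is nothing to prove. So assume $c_g=\hat{\pi}_g$, which gives $\hat{LME}_g=\hat{p}_{\pi,g}+\hat{p}_o c_g$. Write the stationarity condition \eqref{eq:dual:pg} of the \ref{p:OPF} dual for the variable $P^G_g$:
\begin{equation*}
\sigma_g-\hat{LME}_g=\hat{p}^+_{\gamma,g}+\hat{p}^-_{\gamma,g}.
\end{equation*}
The sign conditions \eqref{eq:dual:plus}--\eqref{eq:dual:minus} and complementary slackness of the \ref{p:OPF} on the bounds $P^{min}_g\le P^G_g\le P^{max}_g$ then yield the following. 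If $\sigma_g-\hat{LME}_g>0$, then $\hat{p}^-_{\gamma,g}>0$, so $\hat{P}^G_g=P^{min}_g$, which minimizes $(\sigma_g-\hat{LME}_g)P^G_g$ over the box. If $\sigma_g-\hat{LME}_g<0$, then $\hat{p}^+_{\gamma,g}<0$, so $\hat{P}^G_g=P^{max}_g$, again the minimizer. If $\sigma_g-\hat{LME}_g=0$, the objective is constant and any point is optimal. Hence $\hat{P}^G_g$ solves \eqref{eq:gen-dec} for every generator.

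The main obstacle is the case $c_g=\hat{\pi}_g$. Here \eqref{eq:dual:pg} contains the term $\hat{p}_o c_g$, while the LME is defined with $\hat{p}_o\hat{\pi}_g$, and these coincide only because $c_g=\hat{\pi}_g$. One must check that $P^G_g$ enters \eqref{eq:cm:opt} with coefficient $c_g$ and that the sign of $p_o$ is consistent with the paper's convention; I expect the latter to be the delicate bookkeeping point. One must also confirm that the generator bound constraints of the \ref{p:OPF} carry their own multipliers $p^\pm_{\gamma,g}$, with complementary slackness holding with respect to the same primal $\hat{P}^G$. Both follow from LP duality applied to the \ref{p:OPF} at the given optimal pair.
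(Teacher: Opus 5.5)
Your proposal is correct and follows essentially the same route as the paper. Both arguments first place $\hat P^G_g$ in $P^{G*}_g$ via the LMP welfare theorem (you also check this directly through complementary slackness of the \ref{p:dcOPF} pair), then reduce to the case $c_g=\hat\pi_g$, and finally read off optimality for \eqref{eq:gen-dec} from \eqref{eq:dual:pg}, the sign constraints, and complementary slackness on the generator bounds. Two small points in your favour: you handle the case $\sigma_g=\hat{LME}_g$ explicitly, which the paper leaves implicit; and the sign of $\hat p_o$ you flagged does not matter, because $\hat p_o c_g$ and $\hat p_o\hat\pi_g$ cancel identically once $c_g=\hat\pi_g$.
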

\begin{proof}
    We begin by observing that from \Cref{thm:lmp:sw} it must be the case that $\hat{P}^G_g$ is an optimal solution to \eqref{eq:gencost-dec}, i.e. $\hat{P}^G_g \in P^{G*}_g$ and thus it remains to be shown that $\hat{P}^G_g$ satisfies the optimality conditions for \eqref{eq:gen-dec}.
    Note that if dim($P^{G*}_g$) $=0$, i.e., if \eqref{eq:gencost-dec} supports only a point solution $\hat{P}^G_g$ will be trivially optimal for \eqref{eq:gen-dec}, thus we proceed assuming dim($P^{G*}_g$) $>0$ and \eqref{eq:gencost-dec} supports multiple solutions. However, for this to be the case, it must be the case that $c_g-LMP_g=0$ as otherwise there would only be one valid solution to \eqref{eq:gencost-dec} ($P^{max}_g$ or $P^{min}_g$ depending on the sign of $c_g-LMP_g$). 
    Therefore, we see that $c_g$ is precisely $LMP_g=\pi_g$ and from \eqref{eq:dual:pg} 
    \begin{equation}
        \sigma_g = \hat{p}_{\pi,g}+\hat{p}_o \hat{\pi}_g+\hat{p}^+_{\gamma,g}+\hat{p}^-_{\gamma,g}=\hat{LME_g}+\hat{p}^+_{\gamma,g}+\hat{p}^-_{\gamma,g}.
    \end{equation}
    Finally, combining \eqref{eq:dual:plus} and \eqref{eq:dual:minus} with complementary slackness of the \ref{p:OPF} requires that $\hat{p}^+_{\gamma,g} \not = 0 \implies \hat{P}^G_g=P^{max}_g$ and $\hat{p}^-_{\gamma,g} \not = 0 \implies \hat{P}^G_g=P^{min}_g$ yields the following implications:
    \begin{equation}
        \sigma_g < \hat{LME}_g \implies \hat{p}^+_{\gamma,g} \not = 0 \implies \hat{P}^G_g=P^{max}_g,
    \end{equation}
     \begin{equation}
        \sigma_g > \hat{LME}_g \implies \hat{p}^-_{\gamma,g} \not = 0 \implies \hat{P}^G_g=P^{min}_g.
    \end{equation}
    
\end{proof}
\subsection{The Carbon Footprint Theorem}\label{sec:cft}
Just as the financial accounts across all components of the power grid sum to the total system cost under LMP based pricing, strong duality of linear programs shows that the carbon accounts sum to the total carbon emissions. Such a result has been previously shown in \cite{ruiz2010analysis} for the redispatch based definition of LMEs. Here we extend the result to the introduced Definition \ref{def:extlme}, analyzing additivity under the two-level market mechanisms.
\begin{theorem}[Carbon Footprint Theorem] \label{thm:carbonfootprint}
The carbon accounts of all loads, generators, and transmission lines sum to the total system carbon emissions:
\begin{equation} \label{eq:carbonfootprint}
    \sum_{g \in G} \sigma_g P^G_g = \sum_{i \in N}LME_i P^D_i + \sum_{g \in G}(\sigma_g - LME_g) P^G_g + \sum_{\ell \in L} SCI_\ell |f_\ell|.
\end{equation}
\end{theorem}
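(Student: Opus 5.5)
The plan is to subtract $\sum_{g}\sigma_g P^G_g$ from both sides of \eqref{eq:carbonfootprint}, which reduces the claim to the identity
\[
\sum_{i\in N} LME_i\Big(P^D_i-\sum_{g\in G_i}P^G_g\Big)+\sum_{\ell\in L}SCI_\ell|f_\ell| = 0 .
\]
Throughout I fix the optimal primal--dual pair of the \ref{p:OPF} that defines $LME$ and $SCI$. By flow balance \eqref{eq:cm:fb}, $P^D_i-\sum_{g\in G_i}P^G_g=\sum_{\ell=(j,i)}f_\ell-\sum_{\ell=(i,j)}f_\ell$. Regrouping the first sum by lines, as in the proof of \Cref{thm:fft}, turns it into
\[
\sum_{\ell=(i,j)\in L} f_\ell\,(LME_j-LME_i).
\]

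Next I expand $LME_j-LME_i=(p_{\pi,j}-p_{\pi,i})+p_o(\pi_j-\pi_i)$ using two linear relations.

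\emph{The dual-feasibility row for $f_\ell$.} By \eqref{eq:dual:f}, $p_{\pi,j}-p_{\pi,i}=-(p_{z,\ell}+p^+_{\rho,\ell}+p^-_{\rho,\ell})$.

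\emph{The primal constraint of the combined model.} The constraint $-z_\ell+\rho^+_\ell+\rho^-_\ell-\pi_i+\pi_j=0$ gives $\pi_j-\pi_i=z_\ell-\rho^+_\ell-\rho^-_\ell$.

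Substituting both, the line sum splits into four groups. The $p_z$ group is $-\sum_\ell f_\ell p_{z,\ell}$; it vanishes by substituting $f_\ell=B_{ij}(\theta_i-\theta_j)$ from \eqref{eq:cm:kvl}, regrouping by nodes, and invoking \eqref{eq:dual:voltage}. This is exactly the telescoping step used at the end of the proof of \Cref{thm:fft}. The $z$ group is $p_o\sum_\ell f_\ell z_\ell$; it vanishes by the same argument, using the primal constraint $\sum_{\ell=(i,j)}B_\ell z_\ell-\sum_{\ell=(j,i)}B_\ell z_\ell=0$ in place of \eqref{eq:dual:voltage}. What remains is
\[
-\sum_\ell f_\ell\,(p^+_{\rho,\ell}+p^-_{\rho,\ell}) \;-\; p_o\sum_\ell f_\ell\,(\rho^+_\ell+\rho^-_\ell).
\]

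The last step handles these residual terms line by line against $SCI_\ell|f_\ell|=|f_\ell|\big(p^+_{\rho,\ell}-p^-_{\rho,\ell}+p_o(\rho^+_\ell-\rho^-_\ell)\big)$, using complementary slackness.
\begin{itemize}
\item If $|f_\ell|<f^{max}_\ell$, all four multipliers vanish.
\item If $f_\ell=f^{max}_\ell>0$, then $p^-_{\rho,\ell}=\rho^-_\ell=0$, and $-f_\ell(p^+_{\rho,\ell}+p_o\rho^+_\ell)$ cancels $SCI_\ell|f_\ell|$.
\item If $f_\ell=-f^{max}_\ell$, then $p^+_{\rho,\ell}=\rho^+_\ell=0$, and $|f_\ell|(p^-_{\rho,\ell}+p_o\rho^-_\ell)$ cancels $-|f_\ell|(p^-_{\rho,\ell}+p_o\rho^-_\ell)$.
\end{itemize}
Lines with $f^{max}_\ell=0$ contribute zero trivially.

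I expect the main obstacle to be justifying complementary slackness for the \emph{primal} variables $\rho^\pm$ of the combined model. These are not multipliers of the combined LP, so their slackness does not come from that LP's own optimality conditions. My route is this: feasibility in the \ref{p:OPF}, together with constraint \eqref{eq:cm:opt}, weak duality and dual feasibility of $(\pi,\rho,\gamma,z)$, forces $(P^G,f,\theta)$ and $(\pi,\rho^\pm,\gamma^\pm,z)$ to be a primal--dual optimal pair of the \ref{p:dcOPF}. Complementary slackness of the \ref{p:dcOPF} then gives $\rho^+_\ell(f_\ell-f^{max}_\ell)=0$ and $\rho^-_\ell(f_\ell+f^{max}_\ell)=0$. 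The slackness for $p^\pm_{\rho}$ follows directly from optimality of the combined LP.
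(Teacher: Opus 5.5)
Your proof is correct. It uses the same ingredients as the paper's proof: flow balance, the line-stationarity row \eqref{eq:dual:f}, KVL together with \eqref{eq:dual:voltage}, and complementary slackness for both $p^\pm_\rho$ and $\rho^\pm$. You simply reorder them, passing to line sums first and applying complementary slackness last. The only substantive difference is in the $p_o$-weighted terms. The paper removes them in one step by citing strong duality of the \ref{p:dcOPF}, whereas you cancel them using the row $-z_\ell+\rho^+_\ell+\rho^-_\ell-\pi_i+\pi_j=0$ and the $z$ node-sum constraint, by the same telescoping you use for $p_z$. You also make explicit, via \eqref{eq:cm:opt} and weak duality, why $\rho^\pm$ satisfy complementary slackness, which the paper leaves implicit.
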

\begin{proof}{of \Cref{thm:carbonfootprint}}
    We begin by defining an arbitrary set of optimal primal and dual solutions to the \ref{p:OPF} of the form $P^G$,$f$, $\theta$, $\pi$, $\gamma^+$, $\gamma^-$,$\rho^+$, $\rho^-$, $z$, and $p$.
    We now proceed to evaluate the sum of carbon accounts:
    \begin{equation}
        \sum_{i \in N}LME_i P^D_i + \sum_{g \in G}(\sigma_g - LME_g) P^G_g + \sum_{\ell \in L} SCI_\ell |f_\ell|.
    \end{equation}
    Plugging in definition of $LME_i$ and $SCI_i$ gives
    \begin{equation}
        \sum_{i \in N}\left((p_{\pi,i}+p_o\pi_i) P^D_i\right) + \sum_{g \in G}\left((\sigma_g - p_{\pi,g}-p_o\pi_g) P^G_g\right) + \sum_{\ell \in L} \left( (p^+_{\rho,\ell}-p^-_{\rho,\ell}+p_o(\rho_\ell^+-\rho_\ell^-))|f_\ell|\right) 
    \end{equation}
    which from complementary slackness between $f_\ell$ and its corresponding dual variables for both the \ref{p:OPF} and the \ref{p:dcOPF} leads to
    \begin{equation}
        \sum_{i \in N}\left((p_{\pi,i}+p_o\pi_i) P^D_i\right) + \sum_{g \in G}\left((\sigma_g - p_{\pi,g}-p_o\pi_g) P^G_g\right) + \sum_{\ell \in L} \left( (p^+_{\rho,\ell}+p^-_{\rho,\ell}+p_o(\rho_\ell^++\rho_\ell^-))f_\ell \right) 
    \end{equation}
    and using strong duality of the \ref{p:dcOPF} simplifies to
    \begin{equation}
       \sum_{i \in N}\left(p_{\pi,i} P^D_i\right) + \sum_{g \in G}\left((\sigma_g - p_{\pi,g}) P^G_g\right) + \sum_{\ell \in L} \left( (p^+_{\rho,\ell}+p^-_{\rho,\ell})f_\ell \right). 
    \end{equation}
    Substituting $(p^+_{\rho,\ell}+p^-_{\rho,\ell})$ using \eqref{eq:dual:f} and rearranging leads to
    \begin{equation}
        \sum_{i \in N}\left(p_{\pi,i} (P^D_i-\sum_{g \in G_i}P^G_g+\sum_{\ell=(i,j)\in L}f_\ell-\sum_{\ell=(j,i)\in L}f_\ell)\right) + \sum_{g \in G}\left(\sigma_g P^G_g\right) - \sum_{\ell=(i,j) \in L} \left(p_{z,\ell}f_\ell \right).
    \end{equation}
    Where plugging in \eqref{eq:cm:kvl} and \eqref{eq:cm:fb} simplifies to
    \begin{equation}
        \sum_{g \in G}\left(\sigma_g P^G_g\right) - \sum_{i\in N}\theta_i \left(\sum_{\ell=(i,j)\in L}p_{z,\ell} B_{\ell}-\sum_{\ell=(j,i)\in L}p_{z,\ell}B_{\ell} \right)
    \end{equation}
    which using \eqref{eq:dual:voltage} simplifies to
    \begin{equation}
        \sum_{g \in G}\sigma_g P^G_g.
    \end{equation}
\end{proof}

\subsection{Importance of LME Accounting Properties}\label{sec:import}
As previously mentioned, marginal accounting schemes in the form of locational marginal pricing have seen significant adoption in electricity markets. The driving forces behind such adoption have been the benefits behind the following two properties:
\begin{enumerate}
    \item \emph{Decentralized equilibrium}, the use of LMPs incentivizes generation resources in a way consistent with the system operator's central dispatch. This significantly decreases the need for costly forcing mechanisms such as generator on/off payments or non-cooperation fines.
    \item \emph{Total cost consistency}, the net sum of financial accounts across generation, load, and transmission under LMP pricing is known to sum to precisely the total system cost. This ensures the net cost is properly covered by the system entities without additional cost externalities or deficits.
\end{enumerate}

Under LME accounting we observed that such key properties are preserved through \Cref{thm:fft}, \Cref{thm:sft}, and \Cref{thm:carbonfootprint}. In particular, such properties support a two-stage dispatch which first minimizes economic costs, then, while remaining economically optimal, seeks to minimize emissions. Here, the primary problem ensures the accounting scheme is aligned with the current economic dispatch method of system operation, while the secondary objective allows for the introduction of carbon impacts to the account ledger and furthermore, incentivizes the lowest possible carbon dispatch while maintaining normal (economic) system operation. In \Cref{sec:texas} we explore the empirical benefits of these theorems and other key LME properties.

\subsection{Practical Examples and Intuition of LME Accounting}\label{sec:intuition}
In order to familiarize the reader with the implementation of such an LME accounting scheme and provide insight behind the account breakdown and meanings we submit this section which considers a set of simplified grid examples.

\subsubsection{Transmission Constrained Example}
\begin{figure}[h]
    \centering
    \resizebox{0.4\textwidth}{!}{
        \begin{tikzpicture}[every text node part/.style={align=center},
bus/.style={circle, draw=gray!60, fill=gray!5, very thick, minimum size=30mm},
fossil/.style={regular polygon,regular polygon sides=4, draw=red!60, fill=red!5, very thick, minimum size=5mm},
renewable/.style={regular polygon,regular polygon sides=4, draw=green!60, fill=green!5, very thick, minimum size=5mm},
load/.style={regular polygon,regular polygon sides=5, draw=blue!60, fill=blue!5, very thick, minimum size=3mm},
]
\node[bus] at (-4, 0)     (bus1)                        {Bus 1\\ \textcolor{purple}{(0 unit CO2/MWh)}};
\node[bus] at (4, 0)      (bus2)                          {Bus 2\\ \textcolor{purple}{(1 unit CO2/MWh)}};
\node[load]     (load)       [above=of bus2] {Load 1\\(250 MW)};
\node[fossil]   (fossilgen)  [below=of bus2] [label={[text=red]below: Dispatch 150 MW}] {Gen 2 \\(200 MW)};
\node[renewable]   (renewgen)  [below=of bus1] [label={[text=red]below: Dispatch 100 MW}] {Gen 1\\(200 MW)};

\draw[->, very thick] (bus1.east) -- (bus2.west) node [midway, above, sloped] (line lim) {Capacity 100 MW} node [midway, below, sloped, red] (line flow) {Flow 100 MW};

\draw[dotted, very thick] (bus2.north) -- (load.south);
\draw[dotted, very thick] (bus1.south) -- (renewgen.north);
\draw[dotted, very thick] (bus2.south) -- (fossilgen.north); 
\end{tikzpicture}
        }
    \caption{Transmission constrained two-node example, generator capacities, load magnitudes, and line capacities are shown in black while generator production and line flows are shown in red and nodal LME values are shown in purple. Gen 1 is set to be a VRE generator with a emissions intensity of 0 unit CO2/MWh while Gen 2 is set to be a fossil fuel generator with emissions intensity 1 unit CO2/MWh. It is assumed the VRE generation unit will be dispatched ahead of the fossil fuel.}
    \label{fig:2bus:trans}
\end{figure}
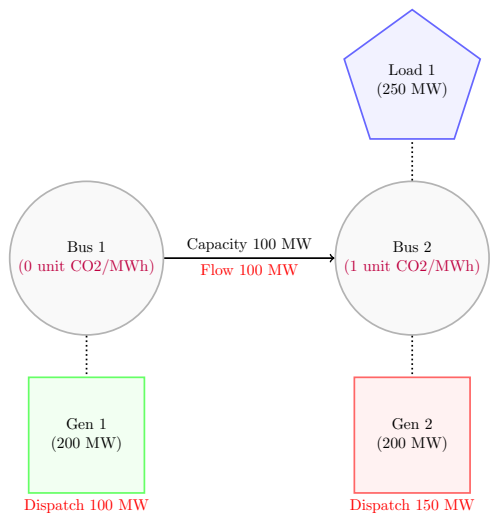
\Cref{fig:2bus:trans} displays a two node system where transmission capacity is limiting the dispatch of a renewable generator (Gen 1). Note that if a load is added at node 1 the increased demand can be met by generator 1, the renewable generator, as the generator is only dispatched 100 MW of its 200 MW total capacity, leading to an LME of 0 unit CO2/MWh. However, for added load at node 2, the increased demand will be met by generator 2 as the flow limit on the line connecting the nodes is already at capacity, leading to an LME of 1 unit CO2/MWh. Applying the carbon accounting scheme outlined in \Cref{sec:accounting} over an hour of dispatch yields a carbon account of 250 units CO2 for load 1, -100 units CO2 for the transmission infrastructure, and 0 units CO2 for all other entities.
\subsubsection{Generation Capacity Constrained Example}
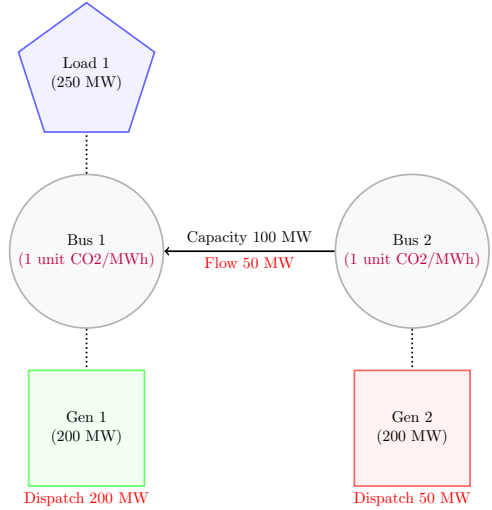
\begin{figure}[h]
    \centering
    \resizebox{0.4\textwidth}{!}{
        \begin{tikzpicture}[every text node part/.style={align=center},
bus/.style={circle, draw=gray!60, fill=gray!5, very thick, minimum size=30mm},
fossil/.style={regular polygon,regular polygon sides=4, draw=red!60, fill=red!5, very thick, minimum size=5mm},
renewable/.style={regular polygon,regular polygon sides=4, draw=green!60, fill=green!5, very thick, minimum size=5mm},
load/.style={regular polygon,regular polygon sides=5, draw=blue!60, fill=blue!5, very thick, minimum size=3mm},
]
\node[bus] at (-4, 0)     (bus1)                        {Bus 1\\ \textcolor{purple}{(1 unit CO2/MWh)}};
\node[bus] at (4, 0)      (bus2)                          {Bus 2\\ \textcolor{purple}{(1 unit CO2/MWh)}};
\node[load]     (load)       [above=of bus1] {Load 1\\(250 MW)};
\node[fossil]   (fossilgen)  [below=of bus2] [label={[text=red]below: Dispatch 50 MW}] {Gen 2 \\(200 MW)};
\node[renewable]   (renewgen)  [below=of bus1] [label={[text=red]below: Dispatch 200 MW}] {Gen 1\\(200 MW)};

\draw[<-, very thick] (bus1.east) -- (bus2.west) node [midway, above, sloped] (line lim) {Capacity 100 MW} node [midway, below, sloped, red] (line flow) {Flow 50 MW};

\draw[dotted, very thick] (bus1.north) -- (load.south);
\draw[dotted, very thick] (bus1.south) -- (renewgen.north);
\draw[dotted, very thick] (bus2.south) -- (fossilgen.north); 
\end{tikzpicture}
        }
    \caption{Renewable capacity constrained two-node example}
    \label{fig:2bus:gen}
\end{figure}
\Cref{fig:2bus:gen} displays a two node system where generation capacity is the only limiting factor in the dispatch of the renewable generator. Note that if a load is added at node 1 the increased demand must be met by generator 2 as generator 1 is already at capacity leading to an LME of 1 unit CO2/MWh. Similarly, for added load at node 2, the increased demand must also be met by generator 2, once again leading to an LME of 1 unit CO2/MWh. Indeed, generally for cases of non-transmission constrainted dispatch the marginal generator will be the same across all nodes. Now, applying the carbon accounting scheme outlined in \Cref{sec:accounting} over an hour of dispatch yields a carbon account of 250 units CO2 for load 1, -200 units CO2 for generator 1, and 0 units CO2 for all other entities.
\subsubsection{Carbon Account Investment Signals and Intuition}
If we consider carbon accounts to participate in a market-based scheme where positive consumers seek to lower their allocated totals and negative consumers may sell portions of their negative accounts, it is reasonable to view a negative account as a monetary injection, and a positive account as a cost. In \Cref{fig:2bus:trans} the negative carbon account, and associated monetary injection, are held by the transmission line rather than the renewable generation. While this may seem counterintuitive, we note that due to transmission constraints, the renewable generator is not able to dispatch to its full capacity, and thus providing capital for the upgrade of the transmission line over the renewable generator is desirable. 

In contrast, in \Cref{fig:2bus:gen} the negative carbon account is held by the renewable generation. In this case, the renewable generator dispatch is limited only by its capacity and thus providing capital for the upgrade of the renewable generator capacity is desirable while transmission upgrades would not enable a decrease in system emissions. Additionally, we see that in both cases the load receives a positive carbon account, inducing an additional cost on consumption. This is consistent as in each case, decreasing the total load would cause a decrease in system emissions, and thus the extra cost works as a signal to disincentivize excess/unnecessary consumption. 

Overall, in both cases we see the carbon account structures are tied to effective decarbonization strategies for the grid configuration.

\section{Inter-temporal Dispatch: Extending LMEs to Energy Storage}\label{sec:storage}
In practice, the \ref{p:dcOPF} may not always properly capture all the important constraints and requirements of grid operations and there exist many common variants to account for reliability constraints, multiperiod constraints, and more advanced cost/generation constraints. As described in previous sections \Cref{thm:LME}, \Cref{thm:fft}, \Cref{thm:sft}, and \Cref{thm:carbonfootprint} rely mostly on the linearity and subsequent strong duality of the \ref{p:dcOPF}, allowing for straightforward generalizations to linear variants of the \ref{p:dcOPF} and even linearizations of the AC-OPF model such as those in \cite{bienstock_accurate_2025}. 

In this section we explore one such generalization of \ref{p:dcOPF} which includes dispatchable storage resources with changing states over multiple dispatch periods. Here, the set of storage resources is denoted by the set $S$, with the state of charge of storage element $s\in S$ denoted as $E_{s}$, the charging/discharging rate denoted as $P^{S+}_s$ and $P^{S-}_s$ respectively, and the charging/discharging efficiency denoted as $\eta$. Finally, we append the subscript $t \in T$ to denote any element at a particular time step. This leads to the following modified economic dispatch formulation:
\begin{align}
   \min \quad  & \sum_{t \in T} \sum_{g \in G} c_g P^G_{g,t} \tag{DCOPF Storage Model} \label{p:dcOPFstorage}\\
    \text{s.t. }
    &B_{ij} (\theta_{i,t} - \theta_{j,t}) - f_{\ell,t} =  0, &\forall \ell=(i,j) \in L,\label{eq:stor:voltage}\\
    &\sum_{g \in G_i} P^G_{g,t}  - \sum_{\ell = (i,j) \in L}f_{\ell,t} + \sum_{\ell = (j,i) \in L}f_{\ell,t} - \sum_{s \in S_i} (P^{S+}_{s,t} -P^{S-}_{s,t}) = P_{i,t}^D, & \forall i \in N, \forall t \in T, \label{eq:stor:fb}\\
    &f_{\ell,t} \leq f^{max}_{\ell}, & \forall \ell \in L, \forall t \in T, \label{eq:stor:fp}  \\
    &f_{\ell,t} \geq -f^{max}_{\ell}, & \forall \ell \in L, \forall t \in T, \label{eq:stor:fm} \\
    &P^G_{g,t} \leq P^{max}_g, & \forall g \in G, \forall t \in T,\\
    &P^G_{g,t} \geq P^{min}_g, & \forall g \in G, \forall t \in T, \\
    &E_{s,t} \geq 0, & \forall s \in S, \forall t \in T, \\
    &E_{s,t} \leq E^{max}_s, & \forall s \in S, \forall t \in T, \\
    &E_{s,t}+\eta P^{S+}_{s,t} - \frac{1}{\eta}P^{S-}_{s,t}-E_{s,t+1} = 0, & \forall s \in S, \forall t \in T,  \\
    &P^{S+},P^{S-} \geq 0.
\end{align} 
Due to their similar nature to the previous section, all further explicit models and proofs can be found in \Cref{sec:variants} for the sake of brevity.
Now, following the same steps as \Cref{sec:model} we can reformulate to create the combined model formulation, described explicitly in the \ref{p:storageCOPF} of \Cref{sec:variants}. Where, once again applying \Cref{thm:freund} shows that \Cref{def:extlme} remains unchanged and
\begin{equation} \label{eq:lme-stor}
    LME_{i,t}=p_{\pi,i,t}+p_o \pi_{i,t}.
\end{equation}
With this definition, we continue to use the carbon accounts defined in \Cref{sec:accounting} for load, generation, and transmission, however, we must also define carbon accounts for storage. Using the same inspiration as \Cref{sec:accounting} we note that energy injection and withdrawal displaces emissions at a rate of $LME_{s}$ and thus receives an overall carbon account of $LME_s\cdot (P^{S+}_s-P^{S-}_s)$. This allows us to define the generalized version of the carbon footprint theorem,
\Cref{thm:carbonfootprint}, in the storage context:
\begin{theorem}
[Carbon Footprint Theorem for Storage Variant] 
The carbon accounts of all loads, generators, transmission lines, and storage sum to the total system carbon emissions:
\label{eq:carbonfootprintstorage}
\begin{align}
    \sum_{t \in T}\sum_{g \in G} \sigma_{g} P^G_{g,t} = \sum_{t\in T}\Bigg(&\sum_{i \in N}LME_{i,t} P^D_{i,t} + \sum_{g \in G}(\sigma_{g,t} - LME_{g,t}) P^G_{g,t} + \sum_{\ell \in L} SCI_{\ell,t} |f_{\ell,t}| \notag \\
    & +\sum_{s \in S} LME_{s,t} (P^{S+}_{s,t}-P^{S-}_{s,t}) \Bigg).
\end{align}
\end{theorem}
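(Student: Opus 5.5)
We will mirror the proof of \Cref{thm:carbonfootprint} line by line, now summing over $t\in T$ and carrying the extra storage terms through. We fix an arbitrary optimal primal--dual pair of the \ref{p:storageCOPF}. The primal side consists of $P^G,f,\theta,P^{S+},P^{S-},E$ and the inner dual variables $\pi,\gamma^\pm,\rho^\pm,z$ together with the storage-related inner duals, say $\mu^\pm$ for the state-of-charge bounds, $\lambda$ for the state-transition constraints and $\nu^\pm$ for the nonnegativity of the charge rates. The Lagrangian dual side is $p$, with $p_o$ the multiplier of the strong-duality constraint. We substitute $LME_{i,t}=p_{\pi,i,t}+p_o\pi_{i,t}$ and the time-indexed $SCI_{\ell,t}$ into the right-hand side. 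As before, complementary slackness lets us replace $SCI_{\ell,t}|f_{\ell,t}|$ by $(p^+_{\rho,\ell,t}+p^-_{\rho,\ell,t}+p_o(\rho^+_{\ell,t}+\rho^-_{\ell,t}))f_{\ell,t}$.

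\textbf{Separating the two layers.} We collect all terms multiplied by $p_o$. They are
\[
p_o\Big(\sum_{i,t}\pi_{i,t}P^D_{i,t}-\sum_{g,t}\pi_{g,t}P^G_{g,t}+\sum_{s,t}\pi_{s,t}(P^{S+}_{s,t}-P^{S-}_{s,t})+\sum_{\ell,t}(\rho^+_{\ell,t}+\rho^-_{\ell,t})f_{\ell,t}\Big).
\]
We will show this bracket vanishes. Multiply the inner flow-balance constraints by $\pi$ and the remaining inner constraints by their duals. Then the inner dual constraints, including the new ones for $P^{S\pm}$ and $E$, and inner complementary slackness reduce the bracket to the inner duality gap, which is zero by strong duality of the \ref{p:dcOPFstorage}. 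In particular, the storage term cancels against the stationarity conditions for $P^{S\pm}_{s,t}$ together with the $E$-chain terms, which telescope. This yields the analogue of the ``strong duality of the \ref{p:dcOPF}'' step.

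\textbf{Outer-layer cancellation.} The remaining expression is
\[
\sum_{i,t}p_{\pi,i,t}P^D_{i,t}+\sum_{g,t}(\sigma_g-p_{\pi,g,t})P^G_{g,t}+\sum_{\ell,t}(p^+_{\rho,\ell,t}+p^-_{\rho,\ell,t})f_{\ell,t}+\sum_{s,t}p_{\pi,s,t}(P^{S+}_{s,t}-P^{S-}_{s,t}).
\]
We substitute the outer stationarity condition for $f_{\ell,t}$ (the analogue of \eqref{eq:dual:f}) and regroup by node. The coefficient of each $p_{\pi,i,t}$ is then exactly the residual of the flow balance \eqref{eq:stor:fb}. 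Crucially, the storage term enters \eqref{eq:stor:fb} with the sign $-(P^{S+}-P^{S-})$, so it is absorbed precisely by the storage carbon account, and the residual is zero. What remains is $\sum_t\sum_g\sigma_gP^G_{g,t}-\sum_{\ell,t}p_{z,\ell,t}f_{\ell,t}$. Using \eqref{eq:stor:voltage} and the outer stationarity in $\theta_{i,t}$ (the analogue of \eqref{eq:dual:voltage}), the second sum vanishes exactly as in \Cref{thm:carbonfootprint}.

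\textbf{Main obstacle.} The delicate step is the $p_o$-bracket. It now involves the storage inner duals, and in the outer layer there are also stationarity conditions for $P^{S\pm}$ and $E$ that contain $p_o$ times inner-dual coefficients (zero here, since storage has no cost). I would first write out the inner dual of the \ref{p:dcOPFstorage} explicitly and verify that the bracket equals primal cost minus dual objective. I would also check that the outer stationarity conditions in $P^{S\pm}_{s,t}$ and $E_{s,t}$, which we never use explicitly, do not leave stray terms. Because storage carries no emission coefficient, these conditions should only constrain the $p$'s, not contribute to the sum, but this is where a sign or telescoping error in the state-of-charge chain (boundary terms at the first and last $t$) would surface.
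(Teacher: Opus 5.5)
Your proposal is correct and is essentially the paper's argument: substitute the definitions of $LME_{i,t}$ and $SCI_{\ell,t}$, use complementary slackness to remove the absolute values, eliminate all $p_o$-terms via the inner dual constraints (including those for $P^{S\pm}$ and $E$, with the state-of-charge chain telescoping) and inner strong duality, and then close with the outer $f$-stationarity, the flow balance \eqref{eq:stor:fb}, and \eqref{eq:stor:voltage} together with the outer $\theta$-stationarity. One remark removes your boundary-term worry: the $p_o$-bracket vanishes even without inner strong duality or any telescoping, because after multiplying \eqref{eq:stor:fb} by $\pi$ and summing over nodes it becomes $-\sum_{\ell,t} z_{\ell,t} f_{\ell,t}$ via \eqref{p:combstor:fd}, which is zero by \eqref{eq:stor:voltage} and the inner $\theta$-stationarity, just as in your outer-layer step.
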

We can additionally consider the decentralized problems present in the storage variant. In this case, the carbon accounts for generation remain defined the same, and one can observe that the proofs presented in \Cref{sec:dec-eq} remain true for the decentralized generation resources. However, here the storage resources also possess their own decisions and decentralized problems. Here, the storage operator's primary problem can be seen as maximizing the net profit from energy arbitrage over all periods, explicitly:
\begin{align}
    P^{S*}_s=\text{argmin}_{P^{S+}_s, P^{S-}_s, E_s \geq 0} \quad &\sum_{t \in T} LMP_{s,t}\cdot (P^{S+}_{s,t} - P^{S-}_{s,t}) \label{eq:storcost-dec} \\
    \text{s.t. }
    &E_{s,t} \leq E^{max}_s, & \forall t \in T,\\
    &E_{s,t}+\eta P^{S+}_{s,t} - \frac{1}{\eta}P^{S-}_{s,t}-E_{s,t+1} = 0, &  \forall t \in T. 
\end{align}
Furthermore, within the set of storage dispatch satisfying economic optimality of the primary objective (referred to as $D_s^*$), we can define the secondary storage operations problem as 
\begin{align}
    \min_{(P^{S+}_s, P^{S-}_s) \in P^{S*}_s} \quad &\sum_{t \in T} LME_{s,t}\cdot (P^{S+}_{s,t} - P^{S-}_{s,t}) \label{eq:stor-dec} 
\end{align}
Despite the more complex storage decentralized problem spanning across multiple time periods we retain the same equilibrium support as seen in \Cref{thm:fft} and \Cref{thm:sft} for the \ref{p:dcOPF}.
\begin{theorem} \label{thm:stor-fft}
    Let $\hat{LME}=\hat{p}_{\pi}+\hat{p}_o\hat{\pi}$ and $\hat{LMP}=\hat{\pi}$ be LME and LMP values derived from an arbitrary optimal pair of primal and dual solutions to the \ref{p:storageCOPF}. 
    Any vector, $P^{G}$, of optimal solutions to the decentralized generation problem for each generator \eqref{eq:gen-dec} and $P^{S+},P^{S-}$ of optimal solutions to the decentralized generation problem for each storage element \eqref{eq:stor-dec} given pricing parameters $\hat{LME}$ and $\hat{LMP}$ whose induced flows and voltage angles, $f$ and $\theta$, satisfies transmission \eqref{eq:stor:voltage} 
    \eqref{eq:stor:fp}
    \eqref{eq:stor:fm} and flow balance \eqref{eq:stor:fb} constraints defines an assignment of decision variables in an optimal solution to the \ref{p:storageCOPF}.
\end{theorem}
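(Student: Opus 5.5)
Following the template of the proof of \Cref{thm:fft}, I would argue by contradiction. First I would write out the dual feasibility conditions of the storage combined model (the \ref{p:storageCOPF}). These are the stationarity conditions for $f_{\ell,t}$ and $\theta_{i,t}$ (time-indexed copies of \eqref{eq:dual:f}, \eqref{eq:dual:voltage}) and for $P^G_{g,t}$ (copy of \eqref{eq:dual:pg}). I also need the new stationarity conditions for $P^{S+}_{s,t}$, $P^{S-}_{s,t}$ and $E_{s,t}$. These involve the duals of the state-of-charge equality, the bounds $0\le E_{s,t}\le E^{max}_s$, the nonnegativity of $P^{S\pm}$, and the coefficient $p_o$ multiplying the storage-related dual variables appearing in the strong duality constraint.

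Next, I would use the storage analogue of \Cref{thm:lmp:sw}. Decentralized optimality of each generator in \eqref{eq:gencost-dec} and each storage unit in \eqref{eq:storcost-dec}, together with network feasibility, makes $(P^G,P^{S+},P^{S-},E,f,\theta)$ optimal for the \ref{p:dcOPFstorage}. It is therefore feasible for the \ref{p:storageCOPF}. If it is not optimal there, then $\sigma^T P^G>\sigma^T\hat P^G$. Set $\Delta P^G=P^G-\hat P^G$, $\Delta P^{S\pm}$, $\Delta E$, $\Delta f$, $\Delta\theta$.

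For generators, exactly as in \Cref{thm:fft}, I would use $\hat\pi_{g,t}=c_g$ on the nonsingleton directions. This gives $\sigma^T\Delta P^G\le \sum_{t}\hat p_{\pi,t}^T\Delta P^G_t$.

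For storage, the key claim is that $\hat P^S_s$ also lies in $P^{S*}_s$, since it is LMP-optimal by the analogue of \Cref{thm:sft}. Then optimality of $P^S_s$ in the convex secondary problem \eqref{eq:stor-dec} over the convex set $P^{S*}_s$ gives
\[\sum_t \hat{LME}_{s,t}(\Delta P^{S+}_{s,t}-\Delta P^{S-}_{s,t})\le 0.\]
Along directions in $P^{S*}_s$ the LMP objective is constant, so $\sum_t\hat\pi_{s,t}(\Delta P^{S+}-\Delta P^{S-})=0$. This lets me drop the $\hat p_o\hat\pi$ term and conclude $\sum_t \hat p_{\pi,s,t}(\Delta P^{S+}_{s,t}-\Delta P^{S-}_{s,t})\le 0$.

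Finally, I would redo the network telescoping of \Cref{thm:fft} period by period, using the modified flow balance \eqref{eq:stor:fb}. This yields
\[\sum_t\Big(\sum_g \hat p_{\pi,g,t}\Delta P^G_{g,t}-\sum_s \hat p_{\pi,s,t}(\Delta P^{S+}_{s,t}-\Delta P^{S-}_{s,t})\Big)=\sum_{t,\ell}\Delta f_{\ell,t}(\hat p_{\pi,j,t}-\hat p_{\pi,i,t}).\]
The line complementary slackness argument bounds the right-hand side by $-\sum\Delta f\,\hat p_z$, which the KVL and $\theta$-stationarity conditions send to $0$. Combining this with the storage inequality gives $\sigma^T\Delta P^G\le \sum_{t,s}\hat p_{\pi,s,t}(\Delta P^{S+}-\Delta P^{S-})\le 0$, which is a contradiction.

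The main obstacle is the storage step. I must make sure the LME-based secondary objective with the true $\hat p_{\pi,s,t}$ captures the inter-temporal structure. In particular, the state-of-charge coupling duals must not leave a residual term. I handle this by never using storage stationarity directly: I rely only on the first-order optimality of $P^S_s$ within the convex set $P^{S*}_s$ and on $\hat P^S_s\in P^{S*}_s$. The second fact requires checking, via the storage stationarity conditions and complementary slackness in the \ref{p:storageCOPF}, that $\hat P^S_s$ is LMP-optimal for \eqref{eq:storcost-dec}. I would verify this first.
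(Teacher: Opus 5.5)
Your generator and storage inequalities are sound. The storage one is even a little leaner than the paper's: the paper invokes \Cref{thm:stor-sft} to get an equality, whereas your inequality suffices. The network step, however, fails, and the proof fails with it. Differencing \eqref{eq:stor:fb}, multiplying by $\hat p_{\pi,i,t}$ and summing gives
\[
\sum_{t}\Bigl(\sum_{g}\hat p_{\pi,g,t}\,\Delta P^G_{g,t}-\sum_{s}\hat p_{\pi,s,t}\,\Delta P^{Net}_{s,t}\Bigr)=\sum_{t}\sum_{\ell=(i,j)\in L}\Delta f_{\ell,t}\,(\hat p_{\pi,i,t}-\hat p_{\pi,j,t}),
\]
where $\Delta P^{Net}=\Delta P^{S+}-\Delta P^{S-}$. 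The factor is $\hat p_{\pi,i}-\hat p_{\pi,j}$, not $\hat p_{\pi,j}-\hat p_{\pi,i}$; you inherited the flipped sign from the chain in the proof of \Cref{thm:fft}. With the correct sign, \eqref{p:dualstor:fd}, \eqref{eq:stor:voltage} and \eqref{p:dualstor:td} reduce the right-hand side to $\sum_{t,\ell}\Delta f_{\ell,t}(\hat p^+_{\rho,\ell,t}+\hat p^-_{\rho,\ell,t})$. Complementary slackness makes every term of this sum nonnegative: if $\hat p^+_{\rho,\ell,t}<0$ then $\hat f_{\ell,t}=f^{max}_\ell$, so $\Delta f_{\ell,t}\le 0$, and symmetrically for $\hat p^-_{\rho,\ell,t}>0$. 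The identity therefore only gives $\sum_{t,g}\hat p_{\pi,g,t}\Delta P^G_{g,t}\ge\sum_{t,s}\hat p_{\pi,s,t}\Delta P^{Net}_{s,t}$. That is the opposite of the bound you need, so no contradiction follows.

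No rearrangement of the argument can fix this, because the statement is false as written. Take one period with $S=\emptyset$, and buses $1,2$ joined by line $(1,2)$ with $f^{max}=100$. Generator $A$ sits at bus 1 with $c_A=10$, $\sigma_A=0$, $P_A\in[0,200]$. Generator $B$ sits at bus 2 with $c_B=10$, $\sigma_B=1$, $P_B\in[0,300]$. Demand is $P^D_1=0$, $P^D_2=250$.

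The cost-optimal set is $\{P_A+P_B=250,\ P_A\le 100\}$, and the combined model selects $\hat P=(100,150)$, $\hat f=100$, with emissions $150$. Together with $\hat\pi=(10,10)$, $\hat\gamma=\hat\rho=\hat z=0$ and multipliers $\hat p_o=0$, $\hat p_\pi=(0,1)$, $\hat p^+_\rho=-1$ (all others zero), this is an optimal primal--dual pair. Since both generators are interior, $\hat{LMP}=(10,10)$ and $\hat{LME}=(0,1)$ for every optimal pair. Both generators are then indifferent in \eqref{eq:gencost-dec} and in \eqref{eq:gen-dec}. Hence $(P_A,P_B)=(50,200)$ with $f=50$ meets every hypothesis, yet it emits $200$.

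The paper's own proof has the same hole. It simply asserts $\sum_i\hat{LME}_i\Delta Inj_i\le 0$ for network-feasible changes, while in this example that sum is $0\cdot(-50)+1\cdot 50=50$.

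Your first step fails the same way one layer down. With $c_A=0$ the LMPs become $(0,10)$, and $(50,200)$ is optimal in \eqref{eq:gencost-dec} for each generator and network-feasible, but it costs $2000>1500$. So appealing to \Cref{thm:lmp:sw} is not enough either.

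What is missing is a transmission-side equilibrium condition, the role a rent-maximizing network agent plays in the standard LMP welfare theorem. For example, require $f_{\ell,t}=f^{max}_\ell$ whenever $\hat\rho^+_{\ell,t}\neq 0$ or $\hat p^+_{\rho,\ell,t}\neq 0$, and $f_{\ell,t}=-f^{max}_\ell$ whenever $\hat\rho^-_{\ell,t}\neq 0$ or $\hat p^-_{\rho,\ell,t}\neq 0$. Then $\Delta f_{\ell,t}=0$ on every line carrying a nonzero multiplier, and the congestion terms vanish in both layers. With that hypothesis and the sign corrected, your argument closes.
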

\begin{theorem} \label{thm:stor-sft}
    Let $\hat{LME}=\hat{p}_{\pi}+\hat{p}_o\hat{\pi}$ and $\hat{LMP}=\hat{\pi}$ be LME and LMP values derived from an arbitrary optimal pair of primal and dual solutions to the \ref{p:storageCOPF}. The values $\hat{P}^{S+}_s$ and $\hat{P}^{S-}_s$ within the optimal primal solution to the \ref{p:storageCOPF} is an optimal solution to \eqref{eq:stor-dec} for any storage resource, $s$, given pricing parameters $\hat{LME}$ and $\hat{LMP}$.
\end{theorem}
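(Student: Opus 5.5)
I will mirror the proof of \Cref{thm:sft}, replacing the scalar box-constrained generator problem with the storage operator's multi-period LP. First, by the storage analogue of \Cref{thm:lmp:sw} (LMPs supporting economic dispatch for the \ref{p:dcOPFstorage}, which follows from LP duality of that model), $(\hat{P}^{S+}_s,\hat{P}^{S-}_s,\hat{E}_s)$ is optimal for \eqref{eq:storcost-dec} at prices $\hat{LMP}_{s,t}=\hat{\pi}_{s,t}$. Hence it lies in $P^{S*}_s$, and it remains to show optimality for the secondary problem \eqref{eq:stor-dec}.

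\textbf{Describing the optimal face.} Since \eqref{eq:storcost-dec} is an LP, $P^{S*}_s$ is a face of its feasible polyhedron. Fix the dual multipliers of \eqref{eq:storcost-dec} obtained from the corresponding components of the \ref{p:dcOPFstorage} dual, restricted to storage $s$ (the state-of-charge balance, the $E^{max}_s$ bounds, and the nonnegativity constraints). By complementary slackness, $P^{S*}_s$ is then the feasible set of \eqref{eq:storcost-dec} intersected with the equalities forcing every constraint with a nonzero multiplier to be tight, and every variable with a strictly positive reduced cost to be zero. This is the multi-period analogue of the step $c_g=LMP_g$ in \Cref{thm:sft}.

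\textbf{Verifying optimality of $\hat P^S_s$ via the combined-model duals.} Next I will write the stationarity conditions of the \ref{p:storageCOPF} with respect to $P^{S+}_{s,t}$, $P^{S-}_{s,t}$ and $E_{s,t}$. The flow-balance duals enter these through $\hat p_{\pi,s,t}$, and the optimality-cut dual $\hat p_o$ multiplies the economic reduced costs, which involve $\hat\pi_{s,t}$ and the storage economic duals. Grouping terms shows that
\[
\hat{LME}_{s,t}=\hat p_{\pi,s,t}+\hat p_o\hat\pi_{s,t}
\]
plays the role of a price in a storage LP. In that LP, the combined-model duals on the state-of-charge dynamics and on the $E$ bounds act as the multipliers, with an extra term $\hat p_o\times$(economic multipliers) attached exactly to constraints that are tight on $P^{S*}_s$.

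Those extra terms can be absorbed as free multipliers on the defining equalities of the face $P^{S*}_s$. This yields a dual-feasible certificate for the problem $\min_{P^{S*}_s}\sum_t \hat{LME}_{s,t}(P^{S+}_{s,t}-P^{S-}_{s,t})$. Complementary slackness of the \ref{p:storageCOPF} at $\hat P$ then supplies complementary slackness for this certificate, so $\hat P^S_s$ is optimal for \eqref{eq:stor-dec}.

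An alternative is an exchange argument. Take any $P'\in P^{S*}_s$ and set $\Delta=P'-\hat P^S_s$. Show $\sum_t\hat{LME}_{s,t}\Delta_t\ge 0$ by pairing $\Delta$ with the stationarity equations and using the sign conditions in the manner of \eqref{eq:dual:plus} and \eqref{eq:dual:minus}. The key fact is that $\Delta$ keeps every economically tight constraint tight, so the $\hat p_o$-weighted terms vanish.

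\textbf{Main obstacle.} The hardest step is the bookkeeping that matches the $\hat p_o$ terms to the face $P^{S*}_s$. Unlike \Cref{thm:sft}, where $\dim P^{G*}_g>0$ forced $c_g=LMP_g$, here nonzero economic multipliers on intertemporal constraints need not vanish. I must argue that each term they generate is multiplied by a $\Delta$-component that is zero on the optimal face. I will try the exchange argument first, since it only requires that feasible directions within $P^{S*}_s$ preserve tightness of the economically binding constraints.
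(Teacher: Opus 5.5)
Your proposal is correct, but it differs from the paper's proof at exactly the point that matters.

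\textbf{The paper's route.} The paper evaluates the storage account at $\hat P^S_s$ in closed form. Recursive substitution through \eqref{p:combstor:ba}, the SOC equation and complementary slackness give $\sum_t(\hat p_o\hat\beta^+_{s,t}+\hat p^+_{\beta,s,t})E^{max}_s$. It then bounds the account of an arbitrary schedule from below by the same number, using $\eta\hat\alpha\le\hat\pi\le\hat\alpha/\eta$, $\eta\hat p_\alpha\le\hat p_\pi\le\hat p_\alpha/\eta$ and $0\le E\le E^{max}$. That chain never uses $P\in P^{S*}_s$. It treats the composite multipliers $\hat p_\alpha+\hat p_o\hat\alpha$, $\hat p_\beta+\hat p_o\hat\beta$, $\hat p_\tau+\hat p_o\hat\tau$ as sign-feasible on the whole storage polytope, which tacitly requires $\hat p_o\ge 0$.

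\textbf{Your route.} You use the same composite multipliers but neutralize the $\hat p_o$-weighted pieces through the face. Wherever $\hat\tau^\pm_{s,t}$ or $\hat\beta^\pm_{s,t}$ is nonzero, the corresponding variable or bound is fixed on $P^{S*}_s$. So those pieces vanish on $\Delta$ (exchange version) or sit on free multipliers (certificate version). This works for either sign of $\hat p_o$.

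That is not cosmetic. Nothing forces $\hat p_o\ge 0$; the main-text convention for $\le$ constraints would give $\hat p_o\le 0$. The whole-polytope claim can then fail. Take one node, two periods, $\eta=1$, $E_{s,1}=0$, a period-1 marginal unit with cost $1$ and $\sigma=1$, and a period-2 marginal unit with cost $2$ and $\sigma=0$. The unique economic dispatch fills the battery in period 1 and empties it in period 2, so $\hat{LME}=(1,0)$. The combined-model stationarity conditions force $\hat p_o\le -1$, and the storage account is $E^{max}_s>0$, whereas the idle schedule has account $0$. The theorem survives only because it restricts to $P^{S*}_s$, which is precisely what your argument exploits.

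\textbf{Two refinements.}

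First, storage has no cost, so $P^{S\pm}_{s,t}$ and $E_{s,t}$ do not appear in the optimality cut. The stationarity conditions of \ref{p:storageCOPF} for these variables therefore contain no $\hat p_o$. The $\hat p_o\hat\pi_{s,t}$ piece cannot be obtained by grouping terms; it has to be added by hand as $\hat p_o$ times the economic dual-feasibility constraints \eqref{p:combstor:api}, \eqref{p:combstor:apim}, \eqref{p:combstor:ba}.

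Second, your main obstacle dissolves. The sum $\sum_t\hat\pi_{s,t}(P^{S+}_{s,t}-P^{S-}_{s,t})$ is constant on $P^{S*}_s$ simply because $P^{S*}_s$ is the argmin set of \eqref{eq:storcost-dec}. It then suffices that $\hat P^S_s$ minimizes $\sum_t\hat p_{\pi,s,t}(P^{S+}_{s,t}-P^{S-}_{s,t})$. This holds over the entire storage feasible set: $(\hat p_\alpha,\hat p^\pm_\beta,\hat p^\pm_\tau)$ restricted to $s$ is dual feasible for that LP and complementary to $\hat P^S_s$ by the KKT conditions of \ref{p:storageCOPF}. No tightness bookkeeping is required.
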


\subsection{Examples and Intuition of LME Accounting with Storage}
In the proceeding \Cref{sec:storage-expts} we observe that the inclusion of storage can decrease temporal and spatial variations in LMEs. To conclude this section, we seek to build intuition around the mechanism by which the inclusion of storage realizes such an effect through considering a simple example.

\begin{figure}[h]
    \centering
    \resizebox{0.9\textwidth}{!}{

\usetikzlibrary{positioning, shapes.geometric}


\begin{tikzpicture}[every text node part/.style={align=center},
    bus/.style={circle, draw=gray!60, fill=gray!5, very thick, minimum size=30mm},
    fossil/.style={regular polygon,regular polygon sides=4, draw=red!60, fill=red!5, very thick, minimum size=5mm},
    renewable/.style={regular polygon,regular polygon sides=4, draw=green!60, fill=green!5, very thick, minimum size=5mm},
    load/.style={regular polygon,regular polygon sides=5, draw=blue!60, fill=blue!5, very thick, minimum size=3mm},
]

\begin{scope}[name prefix=d1-]
    \node[bus] at (-4, 0)      (bus1)      {Bus 1\\ \textcolor{purple}{(0 unit CO2/MWh)}};
    \node[bus] at (4, 0)       (bus2)      {Bus 2\\ \textcolor{purple}{(1 unit CO2/MWh)}};
    \node[load]               (load1)      [left=of bus1] {Load 1\\(50 MW)};
    \node[load]               (load2)      [right=of bus2] {Load 2\\(150 MW)};
    \node[fossil]             (fossilgen) [below=of bus2] [label={[text=red]below: Dispatch 50 MW}] {Gen 2 \\(300 MW)};
    \node[renewable]          (renewgen)  [below=of bus1] [label={[text=red]below: Dispatch 150 MW}] {Gen 1\\(200 MW)};

    \draw[->, very thick] (bus1.east) -- (bus2.west) node [midway, above, sloped] (line lim) {Capacity 100 MW} node [midway, below, sloped, red] (line flow) {Flow 100 MW};
    \draw[dotted, very thick] (bus1.west) -- (load1.east);
    \draw[dotted, very thick] (bus2.east) -- (load2.west);
    \draw[dotted, very thick] (bus1.south) -- (renewgen.north);
    \draw[dotted, very thick] (bus2.south) -- (fossilgen.north);
\end{scope}

\begin{scope}[shift={(22cm, 0)}, name prefix=d2-]
    
    \node[bus] at (-4, 0)      (bus1)      {Bus 1\\ \textcolor{purple}{(1 unit CO2/MWh)}};
    \node[bus] at (4, 0)       (bus2)      {Bus 2\\ \textcolor{purple}{(1 unit CO2/MWh)}};
    \node[load]               (load1)      [left=of bus1] {Load 1\\(50 MW)};
    \node[load]               (load2)      [right=of bus2] {Load 2\\(150 MW)};
    \node[fossil]             (fossilgen) [below=of bus2] [label={[text=red]below: Dispatch 200 MW}] {Gen 2 \\(300 MW)};
    \node[renewable]          (renewgen)  [below=of bus1] [label={[text=red]below: Dispatch 0 MW}] {Gen 1\\~~(0 MW)~~};

    \draw[<-, very thick] (bus1.east) -- (bus2.west) node [midway, above, sloped] (line lim) {Capacity 100 MW} node [midway, below, sloped, red] (line flow) {Flow 50 MW};
    \draw[dotted, very thick] (bus1.west) -- (load1.east);
    \draw[dotted, very thick] (bus2.east) -- (load2.west);
    \draw[dotted, very thick] (bus1.south) -- (renewgen.north);
    \draw[dotted, very thick] (bus2.south) -- (fossilgen.north);
\end{scope}

\node[font=\large] at (0, -6.5) {Dispatch Period 1};
\node[font=\large] at (22, -6.5) {Dispatch Period 2};

\end{tikzpicture}

        }
    \caption{Baseline two dispatch period two-node example, network properties are described in the same manner as \Cref{fig:2bus:trans}. The figure describes the network at two distinct dispatch periods, one high renewable dispatch period (left) and one low renewable dispatch period (right).}
    \label{fig:2bus:nostor}
\end{figure}
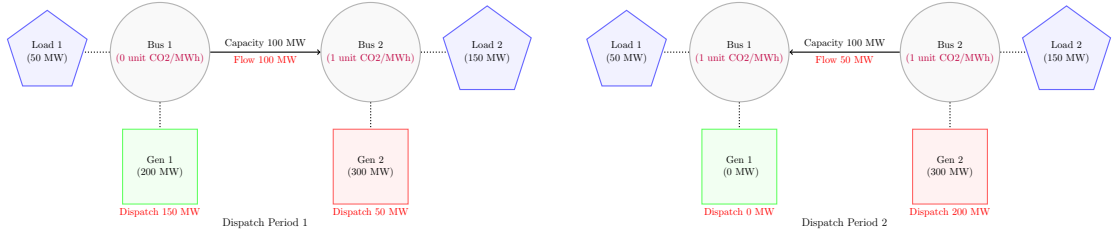

\Cref{fig:2bus:nostor} illustrates a two-node system across two distinct dispatch periods, the first characterized by high renewable availability and the second by no renewable availability. In the first period, transmission constraints prevent the full dispatch of the renewable generator and, consequently, additional load at node 1 is met by renewable generation (yielding an LME of 0 units), while additional load at node 2 requires fossil fuel generation (yielding an LME of 1 unit). In the second period, renewable generation is unavailable; thus, all marginal demand must be met by the fossil fuel generator.

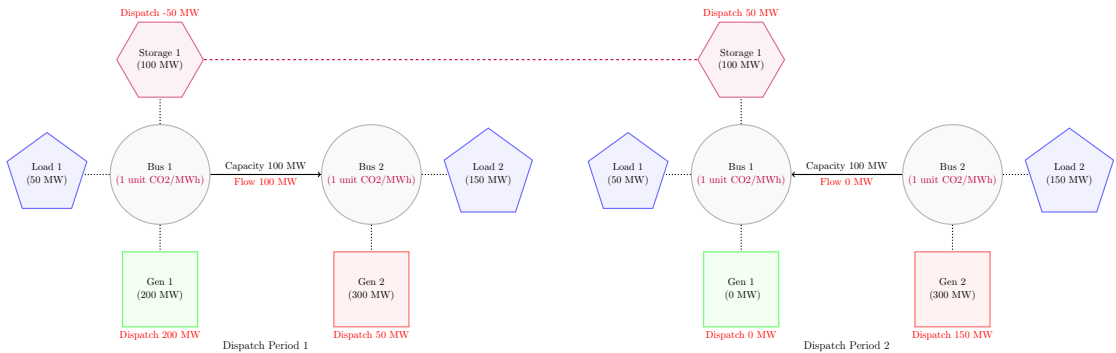
\begin{figure}[h]
    \centering
    \resizebox{0.9\textwidth}{!}{



\begin{tikzpicture}[every text node part/.style={align=center},
    bus/.style={circle, draw=gray!60, fill=gray!5, very thick, minimum size=30mm},
    fossil/.style={regular polygon,regular polygon sides=4, draw=red!60, fill=red!5, very thick, minimum size=5mm},
    renewable/.style={regular polygon,regular polygon sides=4, draw=green!60, fill=green!5, very thick, minimum size=5mm},
    load/.style={regular polygon,regular polygon sides=5, draw=blue!60, fill=blue!5, very thick, minimum size=3mm},
    storage/.style={regular polygon, regular polygon sides=6, draw=purple!60, fill=purple!5, very thick, minimum size=5mm}, 
]

\begin{scope}[name prefix=d1-]
    \node[bus] at (-4, 0)      (bus1)      {Bus 1\\ \textcolor{purple}{(1 unit CO2/MWh)}};
    \node[bus] at (4, 0)       (bus2)      {Bus 2\\ \textcolor{purple}{(1 unit CO2/MWh)}};
    \node[load]               (load1)     [left=of bus1] {Load 1\\(50 MW)};
    \node[load]               (load2)     [right=of bus2] {Load 2\\(150 MW)};
    \node[fossil]             (fossilgen) [below=of bus2] [label={[text=red]below: Dispatch 50 MW}] {Gen 2 \\(300 MW)};
    \node[renewable]          (renewgen)  [below=of bus1] [label={[text=red]below: Dispatch 200 MW}] {Gen 1\\(200 MW)};

    \node[storage] (storage) [above=of bus1] [label={[text=red]above: Dispatch -50 MW}]  {Storage 1\\(100 MW)};
    \draw[dotted, very thick] (bus1.north) -- (storage.south);

    \draw[->, very thick] (bus1.east) -- (bus2.west) node [midway, above, sloped] (line lim) {Capacity 100 MW} node [midway, below, sloped, red] (line flow) {Flow 100 MW};
    \draw[dotted, very thick] (bus1.west) -- (load1.east);
    \draw[dotted, very thick] (bus2.east) -- (load2.west);
    \draw[dotted, very thick] (bus1.south) -- (renewgen.north);
    \draw[dotted, very thick] (bus2.south) -- (fossilgen.north);
\end{scope}

\begin{scope}[shift={(22cm, 0)}, name prefix=d2-]
    
    \node[bus] at (-4, 0)      (bus1)      {Bus 1\\ \textcolor{purple}{(1 unit CO2/MWh)}};
    \node[bus] at (4, 0)       (bus2)      {Bus 2\\ \textcolor{purple}{(1 unit CO2/MWh)}};
    \node[load]               (load1)     [left=of bus1] {Load 1\\(50 MW)};
    \node[load]               (load2)     [right=of bus2] {Load 2\\(150 MW)};
    \node[fossil]             (fossilgen) [below=of bus2] [label={[text=red]below: Dispatch 150 MW}] {Gen 2 \\(300 MW)};
    \node[renewable]          (renewgen)  [below=of bus1] [label={[text=red]below: Dispatch 0 MW}] {Gen 1\\~~(0 MW)~~};
    
    \node[storage] (storage) [above=of bus1] [label={[text=red]above: Dispatch 50 MW}] {Storage 1\\(100 MW)};
    \draw[dotted, very thick] (bus1.north) -- (storage.south);

    \draw[<-, very thick] (bus1.east) -- (bus2.west) node [midway, above, sloped] (line lim) {Capacity 100 MW} node [midway, below, sloped, red] (line flow) {Flow 0 MW};
    \draw[dotted, very thick] (bus1.west) -- (load1.east);
    \draw[dotted, very thick] (bus2.east) -- (load2.west);
    \draw[dotted, very thick] (bus1.south) -- (renewgen.north);
    \draw[dotted, very thick] (bus2.south) -- (fossilgen.north);
\end{scope}

\draw[dashed, very thick, purple] (d1-storage) -> (d2-storage);

\node[font=\large] at (0, -6.5) {Dispatch Period 1};
\node[font=\large] at (22, -6.5) {Dispatch Period 2};

\end{tikzpicture}

        }
    \caption{Two dispatch period two-node example with storage. Storage elements are shown as purple hexagons and are considered to have a 100MW power capacity with 100\% efficiency and unlimited energy storage capacity.}
    \label{fig:2bus:stor}
\end{figure}
In \Cref{fig:2bus:stor} we consider how the addition of a storage element at node 1 changes the scenario in \Cref{fig:2bus:nostor}. Here, the storage element acts as a ``temporal'' directional transmission line, allowing the excess renewable production in dispatch period 1 to be consumed in dispatch period 2\footnote{In \Cref{fig:2bus:stor} we model the storage element as having a lossless efficiency and unbounded storage capacity but this can be generalized as a lossy transmission line between dispatch periods with a capacity equivalent to the battery energy capacity.}. This alleviates binding congestion in the first dispatch period (an equally optimal dispatch solution would be to send 51MW to the storage element and 99MW over the line from node 1 to node 2 in the first dispatch period), causing the fossil fuel generator to respond to additional load at all nodes in both dispatch periods.

\section{ERCOT Case Study}
\label{sec:texas}
In this section, we use a realistic representation of the modern ERCOT grid to investigate empirical properties of the described LME calculation and accounting methodology. To create a realistic representation of the ERCOT grid, we follow the methodology outlined in \cite{thomas_paper}. This leads to a grid with the characteristics described in \Cref{tab:ercot_data}. Additionally, we use historic load and weather data, once again generated in the manner of \cite{thomas_paper} to define a representative year of ERCOT hourly dispatch periods, each period defined by scaling factors for nodal loads and VRE capacity factors. The weather data describes hourly conditions at 20km resolution 2016 and is sourced from \cite{noaa_rap252}, while load data is sourced from \cite{EIA930}, describing hourly profiles across ERCOT.

\begin{table}
    \centering
    \begin{tabular}{ |c||c|c|c|c|c|c|c|c| } 
     \hline
      Entity & Node & Line & Load & Coal & Gas & Wind & Solar & Nuclear \\ 
        \hline
      Units (\#) & 385 & 466 & 202 & 11 & 83 & 106 & 36 & 2\\ 
      Max Capacity (GW) & N/A & 765.5 & 86.5 & 16.4 & 63.7 & 34.1 & 10.3 & 5.1\\ 
     \hline
    \end{tabular}
    \caption{Makeup of the simulated ERCOT grid}
    \label{tab:ercot_data}
\end{table}

\subsection{Characteristics}
\begin{figure}
   \begin{minipage}{0.48\textwidth}
     \centering
    \includegraphics[width=0.7\linewidth]{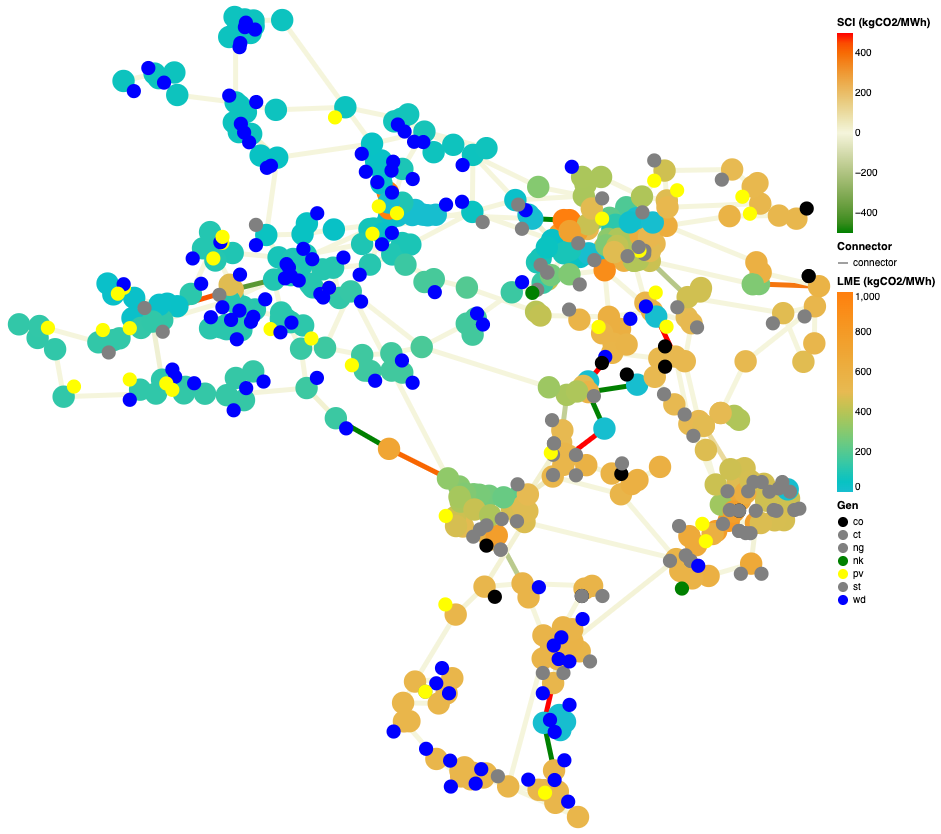}
    \caption{Average locational marginal emissions and shadow carbon intensities over the experimental horizon.}
    \label{fig:avgLME}
   \end{minipage}\hfill
   \begin{minipage}{0.48\textwidth}
     \centering
    \includegraphics[width=\linewidth]{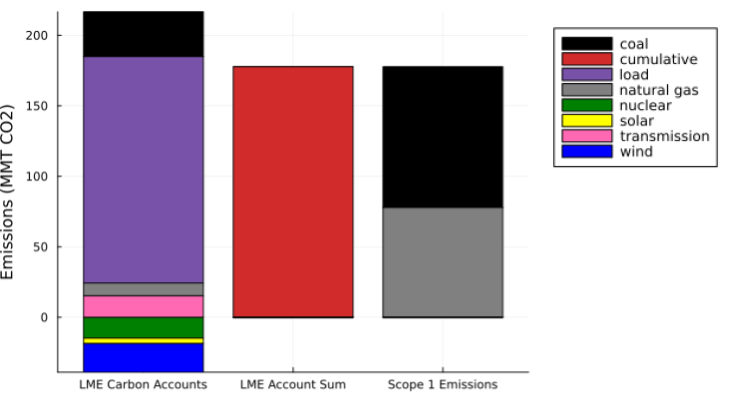}
    \caption{Total carbon accounts over the experimental horizon.}
    \label{fig:accounts}
   \end{minipage}
\end{figure}
The disparity between ERCOT's renewable generation and load locations is strongly reflected in the average nodal LMEs over the experimental horizon with nodes in the high wind and solar production northwest experiencing average values close to 200 kg CO2/MWh and nodes sited in lower production, higher demand eastern portions of the state experiencing average LME values closer to those of natural gas generators (450 kg CO2/MWh). Interestingly, while the south of the state does contain wind generation development, this portion of the state also exhibits some of the highest average LME factors, likely due to stronger transmission ties to the load centers of the eastern cities and a low enough total renewable capacity as to not operate as the marginal generation resource for significant periods of time. This indicates that decarbonization efforts may have close to double the carbon reduction impact per MWh if they focus renewable development in southern areas over their western counterparts. 

Such differences between western and eastern nodal LMEs also reflects a high potential for transmission upgrades to improve renewable deliverability within the Texas grid. Lines connecting the eastern load centers to western renewable generation displayed the highest magnitude carbon rents with one line spanning from Knoxville towards San Antonio and Austin having an average shadow carbon intensity of over -400 kgCO2/MWh. This indicates that in addition to aforementioned generation development in the southern portions of the state, transmission upgrades on east-west connection lines have high potential to have positive impacts on decarbonization through the alleviation of renewable-limiting congestion.
\subsection{Accounting}
Over the course of the experiment, we observed a total of 177.7 million metric tons of scope 1 carbon emissions from generation resources. Using the accounting framework outlined in \Cref{sec:accounting}, we observe that this translates into a carbon account of 160.6 MMT ascribed to the grid load, a net total of 1.8 MMT ascribed to generation resources, and a total of 15.2 MMT ascribed to transmission infrastructure. Removing the rounding done to make such numbers presentable, we get a total of 177.7 MMT of carbon emissions across all carbon accounts, directly matching the scope 1 emissions of the grid dispatch as described in \Cref{thm:carbonfootprint}. 
\begin{table}
    \centering
    \begin{tabular}{ |c||c|c|c|c|c| } 
     \hline
      Generation Type & Coal & Gas & Nuclear & Wind & Solar \\ 
        \hline
      Carbon Account (MMT CO2) & 31.7 & 9.01 & -14.8 & -20.4 & -3.8 \\ 
      Dispatch (GWh) & 115,493 & 172,953 & 34,662 & 100,605 & 14,552 \\ 
      Scope 1 Emissions (MMT CO2) & 99.9 & 77.8 & 0 & 0 & 0 \\
     \hline
    \end{tabular}
    \caption{Generation resource carbon accounts and total power production over the experimental horizon.}
    \label{tab:gen_accounts}
\end{table}

Upon closer inspection of the generation resource accounting, we see that coal generation experiences the highest carbon account with a total of 31.8 MMT CO2, while natural gas production received a much smaller total carbon account of 9.01 MMT CO2 despite having comparable amounts of scope 1 emissions. This is likely due to the tendency of natural gas to act as the marginal resource, while coal generation tends to be part of the baseload and is thus considered dirtier than the marginal resource a much higher portion of the time.

Additionally, we observe a distribution of positive and negative carbon accounts across transmission infrastructure. Lines whose congestion limits the dispatch of coal, necessitating an increased dispatch of natural gas, receive positive carbon accounts, indicating their upgrade would lead to an increase in emissions, while lines whose congestion limits the dispatch of renewable resources receive negative carbon accounts, indicating their upgrade would lead to a decrease in emissions. A natural assumption is that variable renewable generation resources are the most transmission constrained, so it is slightly surprising to see a significant number of both positive and negative transmission carbon accounts. However, the existence of both indicates the importance of a principled upgrade strategy and the utility that indicators such as the shadow carbon rent can provide for actors wishing to improve a transmission network in a method which encourages decarbonization.
\subsection{Patterns}
\begin{figure}
   \centering
   \includegraphics[width=0.7\linewidth]{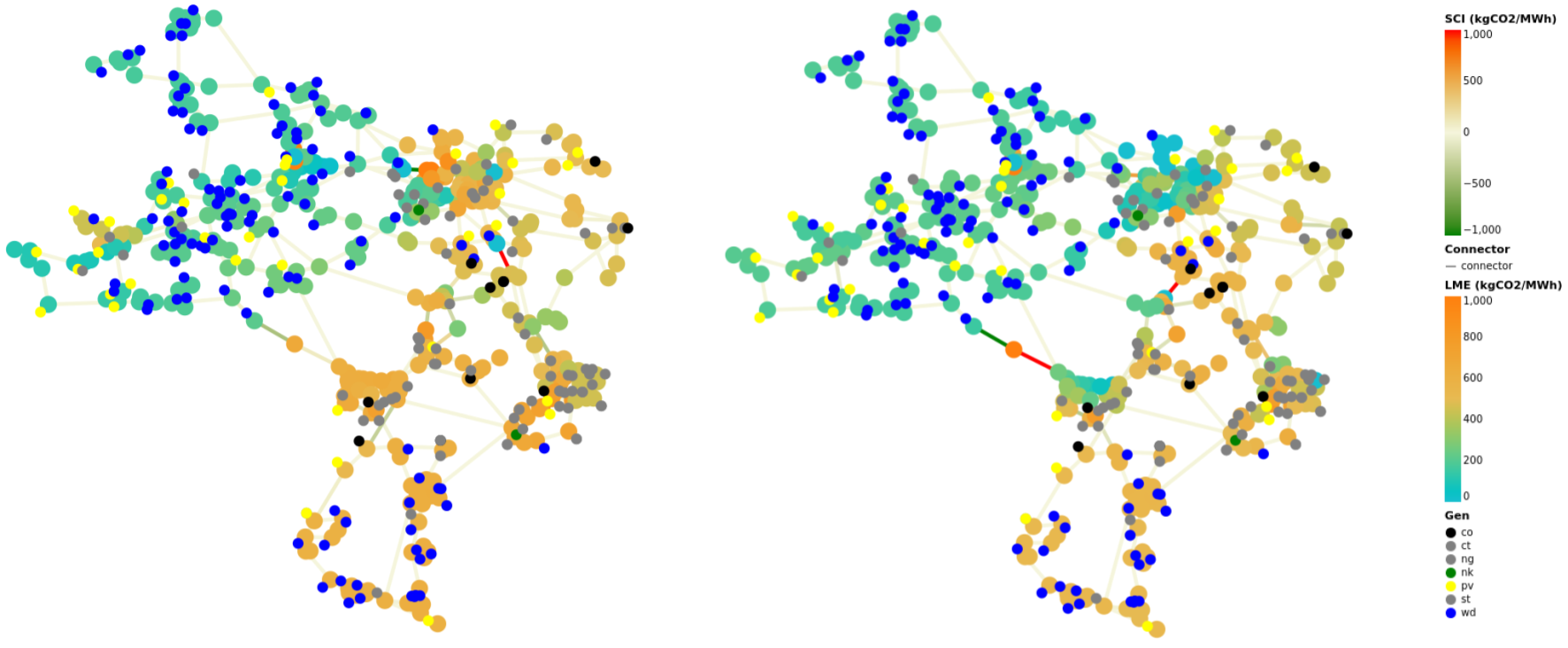}
   \caption{Average locational marginal emissions and shadow carbon intensities over the experimental horizon at 4am (left) and 3pm (right).}
\end{figure}
\paragraph{Hourly Patterns}

In general, western node LMEs tended to be relatively consistent between hours, while eastern nodes experienced lower average hourly LMEs during the afternoon and evening hours (12-20). This decrease occurs when solar production patterns are high leading to increased percentage of time with renewables on the margin. Such hourly changes in the eastern nodes LME averages indicates a significant opportunity for energy storage in these areas to reduce carbon emissions.
\begin{figure}
   \centering
   \includegraphics[width=0.7\linewidth]{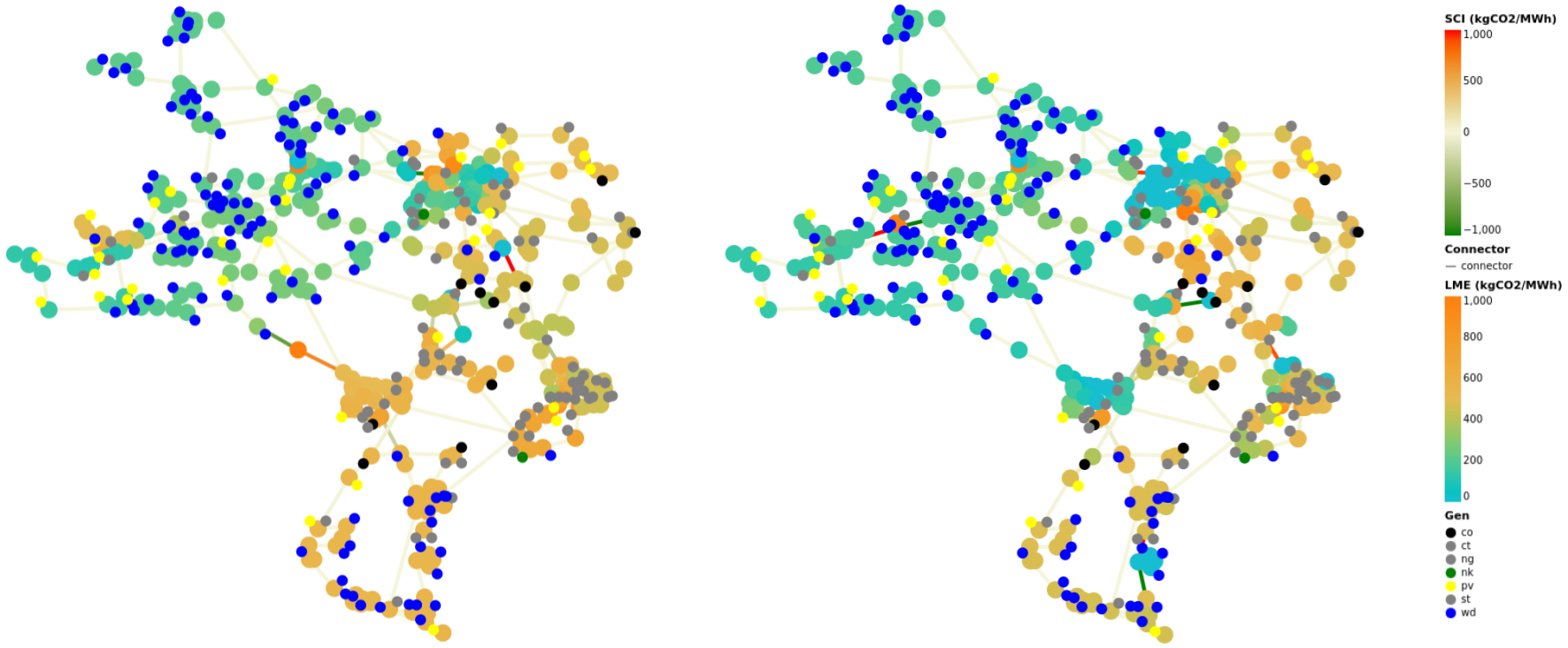}
   \caption{Average locational marginal emissions and shadow carbon intensities over the experimental horizon during January (left) and August (right).}
\end{figure}
\paragraph{Seasonality}
Expanding the temporal scale to a seasonal granularity we see similar correlations to solar generation levels with western nodes remaining largely the same across seasons and eastern nodes experiencing decreased monthly average LMEs during the summer months, when solar generation is higher. 


\subsection{ERCOT Case Study with Storage} \label{sec:storage-expts}
We can additionally apply the methods described in \Cref{sec:storage} to evaluate how LME based accounting in the ERCOT grid simulation changes with the addition of storage. Here we use 2024 storage capacity data obtained from \cite{EIA860}, matching each utility scale storage entity to the closest node and assuming a charging/discharging efficiency of 81\%. Furthermore, we split the operating horizon into single day (24 hour) increments, assuming a storage SOC of 0 at the start of each operating horizon.

As one might expect, the addition of storage allowed for an increase in total renewable production, allowing production spikes that would otherwise be transmission limited to be spread over longer periods of time, leading to an additional 20,178 GWh of wind generation and 2,962 GWh of solar generation. Interestingly, this increased consistency of renewable availability also corresponded with increased transmission constraint limitations on coal generation, with this additional 23,140 GWh of renewable production corresponding to a 83,838 GWh decrease in coal production, which in turn was offset by a 70,998 GWh increase in higher-cost natural gas generation\footnote{Note that these changes in generation do not zero out due to the net increase in power consumption caused by storage inefficiency.}. Overall, this lead to an over 20\% decrease in total system emissions, totaling 137 MMT of carbon over the experimental horizon.
\begin{table}[H]
    \centering
    \begin{tabular}{ |c||c|c|c|c|c| } 
     \hline
      Generation Type & Coal & Gas & Nuclear & Wind & Solar \\ 
        \hline
      Carbon Account (MMT CO2) & 5.5 & -5.2 & -13.0 & -24.0 & -5.8 \\ 
      Dispatch (GWh) & 31,655 & 243,941 & 39,452 & 120,783 & 17,514 \\ 
      Scope 1 Emissions (MMT CO2) & 27.38 & 109.8& 0 & 0 & 0 \\
     \hline
    \end{tabular}
    \caption{Generation resource carbon accounts and total power production with the additon of storage over the experimental horizon.}
    \label{tab:gen_accounts_stor}
\end{table}
With regards to yearlong carbon accounting, the accounts of storage infrastructure received an overall slightly net positive carbon account. This is likely due to the generally consistent nature of daily LMEs combined with a roundtrip efficiency of 81\% causing the positive carbon accounts from charging to not be fully offset by the negative carbon accounts from discharging. Additionally, the carbon accounts reflect how the introduction of storage changed the nature of transmission constraints to be coal-generation limiting, decreasing magnitude and quantity of transmission lines with negative carbon accounts, leading to an overall much higher carbon account for transmission in the storage case.
\begin{figure}[H]
    \centering
    \includegraphics[width=0.5\linewidth]{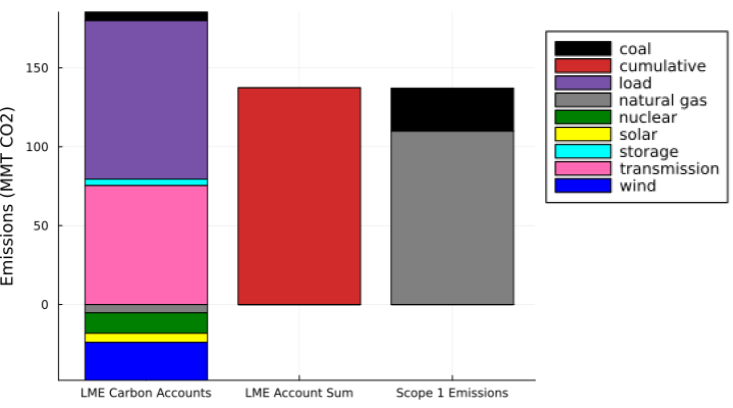}
    \caption{Total carbon accounts with the addition of storage over the experimental horizon.}
    \label{fig:accounts_stor}
\end{figure}

Despite a significant difference in transmission's role and carbon accounts, the overall regional characteristics remain largely the same with the addition of storage with western nodes continuing to display much lower LMEs than their eastern counterparts. Similarly, longer term trends such as seasonal differences remain with higher solar generation months leading to lower overall LMEs.
\begin{figure}[H]
   \centering
   \includegraphics[width=0.7\linewidth]{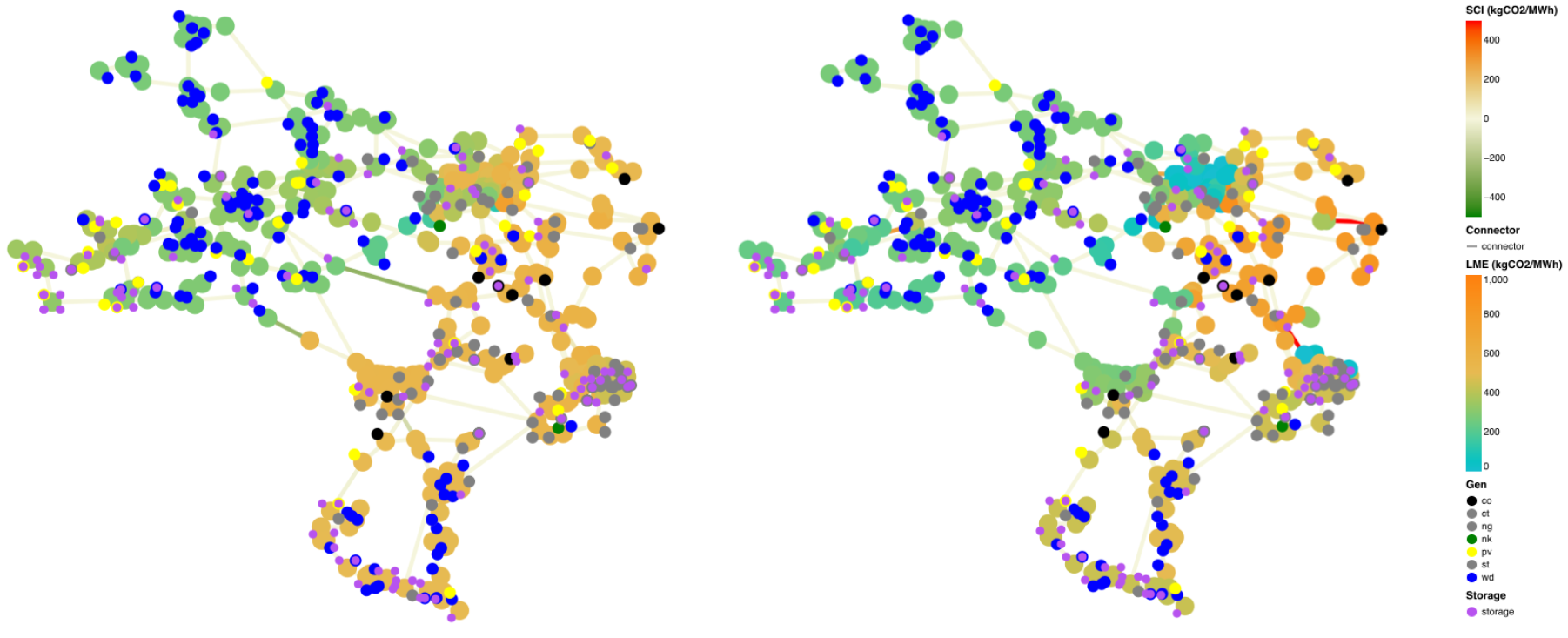}
   \caption{Average locational marginal emissions and shadow carbon intensities with the addition of storage over the experimental horizon during January (left) and August (right).}
\end{figure}
However, when considering smaller time scale trends such as hourly LMEs we see that storage operating on a daily time scale horizon acts to smooth the overall swing of LMEs, leading to more consistency across hourly periods.
\begin{figure}[H]
   \centering
    \includegraphics[width=0.7\linewidth]{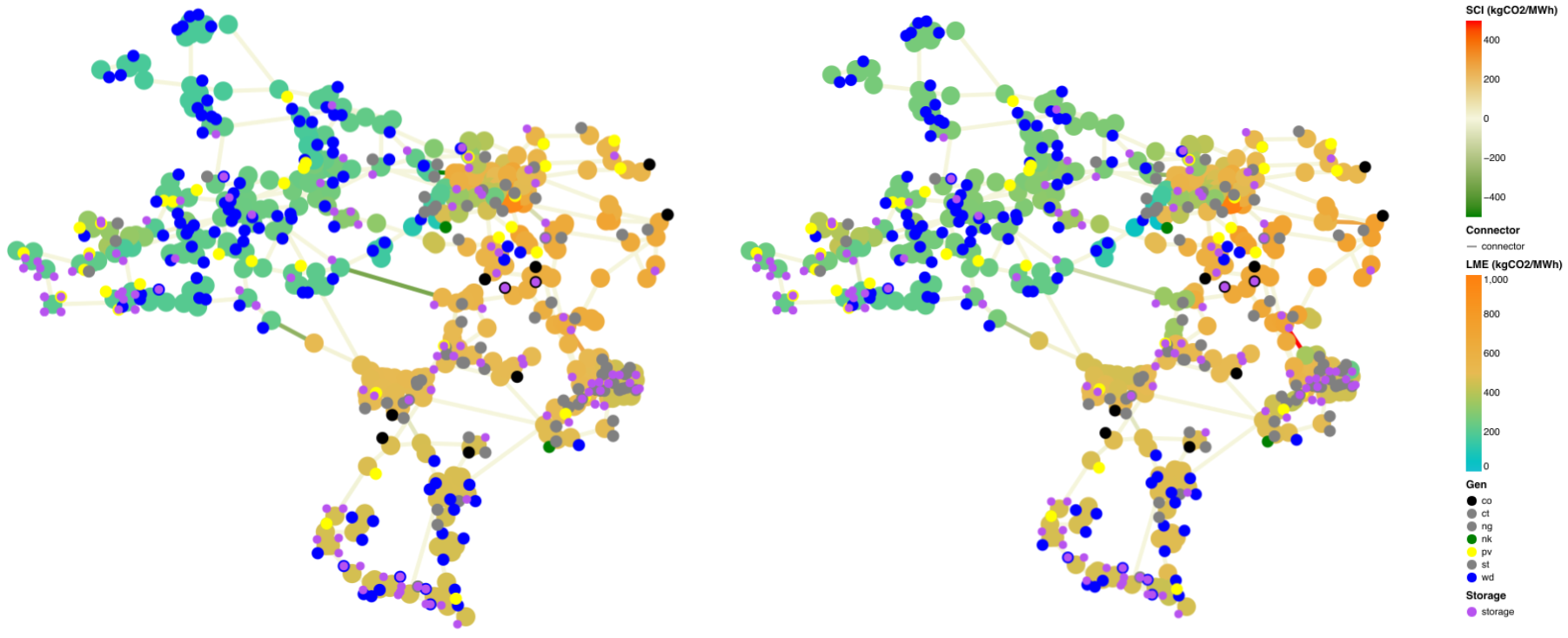}
   \caption{Average locational marginal emissions and shadow carbon intensities with the addition of storage over the experimental horizon at 4am (left) and 3pm (right).}
\end{figure}

\section{Conclusion}
\subsection{Evaluating LME Based Accounting}
Having developed intuition and key properties of LMEs along with an empirical understanding of their performance, we can now evaluate how LME based accounting has the potential to make significant improvements in the key challenge areas of current grid-level carbon accounting outlined in \Cref{sec:issues}.
\paragraph{Deliverability:} LMEs are driven by the physical power flow dispatch and its congestion constraints. Thus, LMEs are explicitly tied in definition to deliverability, reflecting the real impact of injection changes on grid power delivery. Such tendencies to reflect physical deliverability was observed in \Cref{fig:avgLME} where the LMEs clearly displayed the geographic separation, and subsequent transmission constraints, of renewable generation and load location.
\paragraph{Double Counting:} \Cref{thm:carbonfootprint} certifies that the total emissions across all system components carbon accounts will sum to the total system emissions, removing the need for data-intensive calculations like residual power generation. This ties together the consumption and market based approaches to emissions accounting and thus removes a key source of double-counting from the methodology.

\paragraph{Additionality:} The LME framework describes how to evaluate the impact of actions on the power grid, but does not provide a framework for evaluating whether or not consumer actions displace other possible developments. However, such an evaluation framework like the methodology described in the GHG project accounting manuscript \citep{ghgscope2}, can be used to filter which projects are eligible to generate and trade negative valued carbon accounts. In light of the carbon footprint theorem, imposing excess additionality criteria to qualify for negative carbon accounts leads to a lower bound of real system emissions on the sum of all carbon accounts. 

\paragraph{Impact Magnitude:} LMEs precisely describe the marginal impact of actions with regards to system carbon emissions. This provides a concrete and straightforward way to calculate the impact magnitude of different possible actions and proportionally rewards projects with respect to their real impact. As a simplified example, solar projects in a renewable heavy grid experiencing curtailment will have carbon accounts of 0 for times at which curtailment occurs, while a solar project in a dirty grid with high emission resources on the margin will have a highly negative carbon account. In \Cref{fig:avgLME} we directly observed how LMEs reflect the increased impact of renewable development in southern ERCOT over its western counterparts.

\subsection{Future Work}
LME based accounting schemes show significant promise as they continue to support the mechanisms governing modern grid dispatch (\Cref{thm:fft}, \Cref{thm:sft}) and help resolve major challenges facing current carbon accounting practices (\Cref{thm:carbonfootprint}, \Cref{sec:texas}). However, large-system analyses of LME properties remain understudied and studies moving beyond the ERCOT grid to other interconnects, or the entire US grid may lead to valuable insight regarding grid properties and decarbonization pathways. Furthermore, there has been little study into the long-term impacts on grid-level development regarding the consumer adoption of LME accounting policies. Understanding such impacts is crucial to evaluating the long-term effectiveness of marginal emission based accounting in addressing the key issues of current accounting practices. Future capacity expansion studies which integrate consumer responses to LMEs will be crucial to understanding to the long term development effects of such accounting policies.

%
%
%
\appendix
\section{Terms and Definitions}
\makenomenclature

\nomenclature[S]{\(N\)}{set of grid nodes}
\nomenclature[S]{\(G\)}{set of generators}
\nomenclature[S]{\(G_i\)}{set of generators at node $i$}
\nomenclature[S]{\(L\)}{set of transmission lines}
\nomenclature[V]{\(P^G_g\)}{power output of generator $g$}
\nomenclature[C]{\(P^{max}_g\)}{maximum power output of generator $g$}
\nomenclature[C]{\(P^{min}_g\)}{minimum power output of generator $g$}
\nomenclature[C]{\(P^D_i\)}{power demand at node $i$}
\nomenclature[V]{\(f_\ell\)}{power flow across line $\ell$}
\nomenclature[C]{\(f^{max}_\ell\)}{maximum power flow across line $\ell$}
\nomenclature[V]{\(B_\ell\)}{admittance of line $\ell$}
\nomenclature[V]{\(\theta_i\)}{voltage angle of node $i$}
\nomenclature[C]{\(c_g\)}{unit cost of power generation for generator $g$}
\nomenclature[C]{\(\sigma_g\)}{unit emissions of power generation for generator $g$}
\nomenclature[D]{\(\rho^+_\ell\)}{corresponds maximum forward flow constraint of line $\ell$}
\nomenclature[D]{\(\rho^-_\ell\)}{corresponds to maximum backward flow constraint of line $\ell$}
\nomenclature[D]{\(\gamma^+_g\)}{corresponds to maximum generation constraint of generator $g$}
\nomenclature[D]{\(\gamma^-_g\)}{corresponds to minimum generation constraint of generator $g$}
\nomenclature[D]{\(\pi^-_i\)}{corresponds to flow balance constraint of node $i$}
\nomenclature[D]{\(z_\ell\)}{corresponds to Kirchoff's Voltage Law constraint at line $\ell$}
\nomenclature[C]{\(\Psi\)}{grid power transfer distribution matrix with rows pertaining to transmission lines and columns pertaining to the impacts of nodal power injection on each line}
\printnomenclature

\section{Analyses of LMEs under Economic Dispatch with Storage} \label{sec:variants}
We begin by following the same steps as \Cref{sec:model} to explicitly reformulate \ref{p:dcOPFstorage} to create the following combined model formulation:
\begin{equation}
    \min \quad \sum_{t \in T} \sum_{g \in G}\sigma_{g,t} P^G_{g,t} \tag{Combined Storage Model}\label{p:storageCOPF}
\end{equation}
\begin{align}
    \text{s.t. }
     &c \cdot P^G - P^D \cdot \pi - (\rho^+ - \rho^-) \cdot f^{max} \notag \\& - \gamma^+ \cdot P^{max} - \gamma^- \cdot P^{min} - \beta^+\cdot E^{max} \leq 0, \\
    &B_{ij} (\theta_{i,t} - \theta_{j,t}) - f_{\ell,t} =  0, &\forall \ell=(i,j) \in L, \forall t \in T, \label{p:combstor:kvl}\\
    &\sum_{g \in G_i} P^G_{g,t} - \sum_{s \in S_i}(P^{S+}_{s,t}-P^{S-}_{s,t}) \notag\\
    &- \sum_{\ell = (i,j) \in L}f_{\ell,t} + \sum_{\ell = (j,i) \in L}f_{\ell,t}= P^D, & \forall i \in N,\forall t \in T, \label{p:combstor:fb}\\
    &f_{\ell,t} \leq f^{max}_\ell, & \forall \ell \in L, \forall t \in T, \\
    &f_{\ell,t} \geq -f^{max}_\ell, & \forall \ell \in L,\forall t \in T, \\
    &P^G_{g,t} \leq P^{max}_{g,t}, & \forall g \in G, \forall t \in T,\\
    &P^G_{g,t} \geq P^{min}_{g,t}, & \forall g \in G, \forall t \in T,\\
    &E_{s,t} \geq 0, & \forall s \in S, \forall t \in T,\\
    &E_{s,t} \leq E^{max}_s, & \forall s \in S, \forall t \in T,\\
    &E_{s,t}+\eta P^{S+}_{s,t} - \frac{1}{\eta}P^{S-}_{s,t}-E_{s,t+1} = 0, & \forall s \in S, \forall t \in T, \label{p:combstor:soc}\\
    &P^{S+},P^{S-} \geq 0 \\
    &\gamma^+_{g,t} + \gamma^-_{g,t} + \sum_{i \mid g \in G_i} \pi_{i,t}  = c_g, &\forall g \in G,\forall t \in T,  \label{p:combstor:gd} \\
    &z_{\ell,t} + \rho^+_{\ell,t} + \rho^-_{\ell,t} - \pi_{i,t} + \pi_{j,t} = 0, &\forall \ell=(i,j)\in L,\forall t \in T, \label{p:combstor:fd} \\
    &\sum_{\ell = (i,j) \in L} B_{\ell,t} z_{\ell,t} - \sum_{\ell = (j,i) \in L} B_{\ell,t} z_{\ell,t} = 0, &\forall i \in N,\forall t \in T,  \\
    &-\sum_{i \mid s \in S_i}\pi_{i,t} +\eta \alpha_{s,t} + \tau^+_{s,t}= 0, &\forall s \in S, \forall t \in T, \label{p:combstor:api}\\
     &\sum_{i \mid s \in S_i}\pi_{i,t} -\frac{1}{\eta} \alpha_{s,t} +\tau^-_{s,t}= 0, &\forall s \in S, \forall t \in T ,\label{p:combstor:apim}\\
    &\beta^+_{s,t}+\beta^-_{s,t}+\alpha_{s,t}-\alpha_{s,t-1}=0, &\forall s \in S, \forall t \in T,\label{p:combstor:ba}\\\
    &\rho^+, \gamma^+,\beta^+ \leq 0, \\
    &\rho^-,\gamma^-,\beta^-,\tau^+,\tau^- \geq 0.
\end{align} 
This allows us to proceed to the proofs for key LME accounting properties with storage.
\begin{proof}{of \Cref{eq:carbonfootprintstorage}}
    We begin by constructing the dual problem to the \ref{p:storageCOPF}:
    \begin{align}
        \max \quad \sum_{t \in T} ( &\sum_{\ell \in L} (p^+_{\rho,\ell,t}-p^-_{\rho,\ell,t})f^{max} + \sum_{g \in G} p^+_{\gamma,g,t} P^{max}_g + p^-_{\gamma,g,t}P^{min}_g \notag \\&
        + \sum p_{PG,g,t} c_g + \sum_{i \in N} p_{\pi,i,t}P^D_i +\sum_{s \in S}p^+_{\beta,s,t} E^{max}_{s} ) \tag{Combined Storage Dual} \label{p:CDualOPFstorage}
    \end{align}
    \begin{align}
        \text{s.t. }
        &-p_{o} c_g + p_{\pi,g,t} + p^+_{\gamma,g,t} + p^-_{\gamma,g,t} = \sigma_g, &\forall g \in G, \forall t \in T, \\
        &B_{ij} (p_{\theta,i,t} - p_{\theta,j,t}) - p_{f,\ell,t} =  0, &\forall \ell=(i,j) \in L, \forall t \in T,\\
        &\sum_{g \in G_i} p_{PG,g,t} - \sum_{s \in S_i} p^+_{PS,s,t}+p^-_{PS,s,t} \notag \\
        &- \sum_{\ell = (i,j) \in L}p_{f,\ell,t} + \sum_{\ell = (j,i) \in L}p_{f,\ell,t}= p_o P^D_i, & \forall i \in N, \forall t \in T,\\
        &p_{f,\ell,t} \leq p_of^{max}_\ell, &\forall \ell \in L, \forall t \in T,\\
        &p_{f,\ell,t} \geq -p_of^{max}_\ell, &\forall \ell \in L, \forall t \in T, \\
        &p_{PG,g,t} \leq p_oP^{max}_g, &\forall g \in G, \forall t \in T,\\
        &p_{PG,g,t} \geq p_oP^{min}_g, &\forall g \in G, \forall t \in T,\\
        &p_{E,s,t} \geq 0, &\forall s \in S, \forall t \in T,\\
        &p_{E,s,t} \leq p_o E^{max}_s, &\forall s \in S, \forall t \in T,\\
        &\eta p^+_{PS,s,t}-\frac{1}{\eta}p^-_{PS,s,t}+p_{E,s,t} = p_{E,s,t+1}, &\forall s \in S, \forall t \in T,\\
        &p^+_{PS}, p^-_{PS} \geq 0, &\forall s \in S, \forall t \in T,\\
        &p_{z,\ell,t} + p^+_{\rho,\ell,t} + p^-_{\rho,\ell,t} - p_{\pi,i,t} + p_{\pi,j,t} = 0, &\forall \ell=(i,j) \in L, \forall t \in T, \label{p:dualstor:fd}\\
        &\sum_{\ell = (i,j) \in L} B_\ell p_{z,\ell,t} - \sum_{\ell = (j,i) \in L} B_\ell p_{z,\ell,t} = 0, &\forall i \in N, \forall t \in T, \label{p:dualstor:td}\\
        &\eta p_{\alpha,s,t}-\sum_{i \mid s \in S_i}p_{\pi,i,t} +p^+_{\tau,s,t} = 0 &\forall s \in S, \forall t \in T\\
        &-\frac{1}{\eta}p_{\alpha,s,t}+\sum_{i \mid s \in S_i}p_{\pi,i,t}+ p^-_{\tau,s,t} = 0, &\forall s \in S, \forall t \in T,\\
        &p^+_{\beta,s,t}+p^-_{\beta,s,t}+p_{\alpha,s,t}-p_{\alpha,s,t-1}=0, &\forall s \in S, \forall t \in T,\\
        &p^+_{\rho}, p^+_{\gamma}, p^+_{\beta} \leq 0, \\
        &p^-_{\rho}, p^-_{\gamma}, p^+_{\tau}, p^-_{\tau},  p^-_{\beta} \geq 0.
\end{align}
    We now proceed to evaluate the sum of carbon accounts:
    \begin{equation}
        \sum_{t\in T}\left(\sum_{i \in N}LME_{i,t} P^D_{i,t} + \sum_{g \in G}(\sigma_{g,t} - LME_{g,t}) P^G_{g,t} + \sum_{\ell \in L} SCI_{\ell,t} |f_{\ell,t}|+\sum_{s \in S} LME_{s,t} (P^{S+}_{s,t}- P^{S-}_{s,t}) \right)
    \end{equation}
    where plugging in definition of $LME_{i,t}$ and $SCI_{i,t}$ gives
    \begin{align}
        \sum_{t\in T}\Bigg( &\sum_{i \in N}\left((p_{\pi,i,t}+p_o\pi_{i,t}) P^D_{i,t}\right) + \sum_{g \in G}\left((\sigma_g - p_{\pi,g,t}-p_o\pi_{g,t}) P^G_{g,t}\right) \notag \\
        & + \sum_{\ell \in L} \left( (p^+_{\rho,\ell,t}+p^-_{\rho,\ell,t}+p_o(\rho_{\ell,t}^++\rho_{\ell,t}^-))f_{\ell,t}\right)  +\sum_{s \in S} \left((p_{\pi,s,t}+p_o\pi_{s,t}) (P^{S+}_{s,t}- P^{S-}_{s,t}) \right)\Bigg).
    \end{align}
    Substituting $\pi_{s,t}$ using \Cref{p:combstor:api} and \Cref{p:combstor:apim} along with complementary slackness of $\tau$.
    \begin{align}
        \sum_{t\in T}\Bigg( &\sum_{i \in N}\left((p_{\pi,i,t}+p_o\pi_{i,t}) P^D_{i,t}\right) + \sum_{g \in G}\left((\sigma_g - p_{\pi,g,t}-p_o\pi_{g,t}) P^G_{g,t}\right)\notag \\&+ \sum_{\ell \in L} \left( (p^+_{\rho,\ell,t}+p^-_{\rho,\ell,t}+p_o(\rho_{\ell,t}^++\rho_{\ell,t}^-))f_{\ell,t}\right) \notag \\
        & +\sum_{s \in S} \left((p_{\pi,s,t}+p_o\eta\alpha_{s,t}) P^{S+}_{s,t}- (p_{\pi,s,t}+p_o\frac{1}{\eta}\alpha_{s,t})P^{S-}_{s,t}) \right)\Bigg).
    \end{align}
    Applying \Cref{p:combstor:ba} to substitute $\alpha_{s,t}$
    \begin{align}
        \sum_{t\in T}\Bigg( &\sum_{i \in N}\left((p_{\pi,i,t}+p_o\pi_{i,t}) P^D_{i,t}\right) + \sum_{g \in G}\left((\sigma_g - p_{\pi,g,t}-p_o\pi_{g,t}) P^G_{g,t}\right)\notag \\&+ \sum_{\ell \in L} \left( (p^+_{\rho,\ell,t}+p^-_{\rho,\ell,t}+p_o(\rho_{\ell,t}^++\rho_{\ell,t}^-))f_{\ell,t}\right) \notag \\
        & +\sum_{s \in S} \Big((p_{\pi,s,t}+p_o\eta(\beta^+_{s,t+1}+\beta^-_{s,t+1}+\alpha_{s,t+1})) P^{S+}_{s,t}\notag \\&- (p_{\pi,s,t}+p_o\frac{1}{\eta}(\beta^+_{s,t+1}+\beta^-_{s,t+1}+\alpha_{s,t+1}))P^{S-}_{s,t}) \Big)\Bigg)
    \end{align}
    and from subsequent recursive expansion of $\alpha_{s,t}$ using \Cref{p:combstor:ba}
    \begin{align}
        \sum_{t\in T}\Bigg(&\sum_{i \in N}\left((p_{\pi,i,t}+p_o\pi_{i,t}) P^D_{i,t}\right) + \sum_{g \in G}\left((\sigma_g - p_{\pi,g,t}-p_o\pi_{g,t}) P^G_{g,t}\right)\notag \\& + \sum_{\ell \in L} \left( (p^+_{\rho,\ell,t}+p^-_{\rho,\ell,t}+p_o(\rho_{\ell,t}^++\rho_{\ell,t}^-))f_{\ell,t}\right) \notag \\ 
        &+\sum_{s \in S} \left((p_{\pi,s,t}) (P^{S+}_{s,t}- P^{S-}_{s,t})+p_o(\beta^+_{s,t}+\beta^-_{s,t}) \sum_{t'=1}^{t-1}(\eta P^{S+}_{s,t'}- \frac{1}{\eta}P^{S-}_{s,t'}) \right) \Bigg),
    \end{align}
    using \Cref{p:combstor:soc} to substitute for $\sum_{t'=1}^{t-1}(\eta P^{S+}_{s,t'}- \frac{1}{\eta}P^{S-}_{s,t'})$
    \begin{align}
        \sum_{t\in T}\Bigg(&\sum_{i \in N}\left((p_{\pi,i,t}+p_o\pi_{i,t}) P^D_{i,t}\right) + \sum_{g \in G}\left((\sigma_g - p_{\pi,g,t}-p_o\pi_{g,t}) P^G_{g,t}\right)\notag\\& + \sum_{\ell \in L} \left( (p^+_{\rho,\ell,t}+p^-_{\rho,\ell,t}+p_o(\rho_{\ell,t}^++\rho_{\ell,t}^-))f_{\ell,t}\right) \notag \\
        &+\sum_{s \in S} \left((p_{\pi,s,t}) (P^{S+}_{s,t}- P^{S-}_{s,t})+p_o(\beta^+_{s,t}+\beta^-_{s,t}) E_{s,t} \right) \Bigg).
    \end{align}
    Now, using \Cref{p:combstor:gd} and strong duality of the storage DC-OPF dispatch problem this simplifies to
    \begin{align}
        \sum_{t\in T}\Bigg(&\sum_{i \in N}\left((p_{\pi,i,t}) P^D_{i,t}\right) + \sum_{g \in G}\left((\sigma_g - p_{\pi,g,t}) P^G_{g,t}\right) + \sum_{\ell \in L} \left( (p^+_{\rho,\ell,t}+p^-_{\rho,\ell,t})f_{\ell,t}\right)\notag \\& +\sum_{s \in S} \left((p_{\pi,s,t}) (P^{S+}_{s,t}- P^{S-}_{s,t}) \right)\Bigg).
    \end{align}
    Substituting $(p^+_{\rho,\ell,t}+p^-_{\rho,\ell,t})$ using \Cref{p:dualstor:fd} leads to
    \begin{align}
        \sum_{t\in T}\Bigg(&\sum_{i \in N}\left((p_{\pi,i,t}) P^D_{i,t}\right) + \sum_{g \in G}\left((\sigma_g - p_{\pi,g,t}) P^G_{g,t}\right) + \sum_{\ell \in L} \left( (p_{\pi,i,t}-p_{\pi,j,t}-p_{z,\ell,t})f_{\ell,t}\right) \notag\\& +\sum_{s \in S} \left((p_{\pi,s,t}) (P^{S+}_{s,t}- P^{S-}_{s,t}) \right)\Bigg),
    \end{align}
    rearranging to
    \begin{align}
        \sum_{t\in T}\Bigg(& \sum_{i \in N}\left(p_{\pi,i,t} (P^D_{i,t}-\sum_{g \in G_i}P^G_{g,t}+\sum_{s \in S_i}(P^{S+}_{s,t}- P^{S-}_{s,t})+\sum_{\ell=(i,j)\in L}f_{\ell,t}-\sum_{\ell=(j,i)\in L}f_{\ell,t})\right) + \sum_{g \in G}\left(\sigma_g P^G_{g,t}\right) \notag\\
        &- \sum_{\ell=(i,j) \in L} \left(p_{z,\ell,t}f_{\ell,t} \right)\Bigg).
    \end{align}
    Where plugging in \Cref{p:combstor:fb} simplifies to
    \begin{equation}
        \sum_{t\in T}\left(\sum_{g \in G}\left(\sigma_g P^G_{g,t}\right) - \sum_{\ell=(i,j) \in L} \left(p_{z,\ell,t}f_{\ell,t} \right)\right).
    \end{equation}
    Finally, plugging in \Cref{p:combstor:kvl} rearranging the right sum gives
    \begin{equation}
        \sum_{t\in T}\left(\sum_{g \in G}\left(\sigma_g P^G_{g,t}\right) - \sum_{i\in N} \theta_{i,t} \left(\sum_{\ell=(i,j)\in L}p_{z,\ell,t} B_{\ell}-\sum_{\ell=(j,i)\in L}p_{z,\ell,t}B_{\ell} \right)\right)
    \end{equation}
    which \Cref{p:dualstor:td} simplifies to
    \begin{equation}
        \sum_{t\in T}\left(\sum_{g \in G}\sigma_g P^G_{g,t}\right).
    \end{equation}
     
\end{proof}
\begin{corollary}
    Note that $(P^{S+}_{s,t}-P^{S-}_{s,t})$ is precisely the net injection that storage element $s$ provides to the grid at time $t$. This can be equivalently viewed as either a load or carbon free generation resource (depending on if the injection is positive or negative), and solved at the single period dispatch level with an equivalent carbon account for the storage resource. Such an equivalence implies that not only do the sum of carbon accounts over the entire multiperiod dispatch sum to the total carbon emissions, but also the sum of carbon accounts at each dispatch period sum to the total carbon emissions which occur at that time period.
\end{corollary}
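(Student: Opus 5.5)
The corollary makes two claims. First, the storage account $LME_{s,t}(P^{S+}_{s,t}-P^{S-}_{s,t})$ is exactly the account a load (net charging) or a zero-emission generator (net discharging) would receive at the same node and period. Second, the carbon accounts sum to emissions period by period, not just over the whole horizon. I would prove both by fixing an optimal primal--dual pair of the \ref{p:storageCOPF} and freezing the storage net injections $\hat P^{S}_{s,t}:=\hat P^{S+}_{s,t}-\hat P^{S-}_{s,t}$ at their optimal values. With these frozen, each period $t$ defines a single-period \ref{p:dcOPF} in which the storage at node $i$ enters only through \eqref{eq:stor:fb}. A net injection is an additional fixed demand $\hat P^{S}_{s,t}$ added to $P^D_{i,t}$ when positive. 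When negative, it is a fixed generator with $\sigma=0$ and $P^{min}=P^{max}=-\hat P^{S}_{s,t}$. Under the load interpretation the account $LME_{i,t}\hat P^S_{s,t}$ is literally \eqref{eq:acc:load}. Under the generator interpretation, \eqref{eq:acc:gen} with $\sigma=0$ gives $-LME_{i,t}(-\hat P^S_{s,t})$, which is the same expression. This settles the first claim.

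For the per-period additivity, I would not try to prove that the multiperiod duals restricted to period $t$ are optimal for some single-period combined model. That route is delicate because the single strong-duality row \eqref{eq:cm:opt} of the storage model couples all periods. Instead I would redo the telescoping argument from the proof of \Cref{eq:carbonfootprintstorage} while keeping the outer sum over $t$ open. The steps that use \eqref{p:dualstor:fd}, \eqref{p:combstor:fb}, \eqref{p:combstor:kvl} and \eqref{p:dualstor:td} are all period-local: each cancels terms indexed by a single $t$. So the sum of the period-$t$ accounts reduces to $\sum_g\sigma_gP^G_{g,t}$ plus a residual $R_t$. This residual collects exactly the terms multiplied by $p_o$:
\[
R_t=p_o\Big(\sum_i\pi_{i,t}P^D_{i,t}-\sum_g\pi_{g,t}P^G_{g,t}+\sum_\ell(\rho^+_{\ell,t}+\rho^-_{\ell,t})f_{\ell,t}+\sum_s\pi_{s,t}\hat P^S_{s,t}\Big).
\]
It then remains to show $R_t=0$ for every $t$, or at least whenever $p_o\neq0$.

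The main obstacle is this step. Strong duality gives only $\sum_tR_t=0$, so I would instead argue through per-period complementary slackness of the inner economic dispatch. Substituting $c_g=\gamma^+_{g,t}+\gamma^-_{g,t}+\pi_{g,t}$ from \eqref{p:combstor:gd} and using complementary slackness of $\gamma^\pm$ against the generator bounds and of $\rho^\pm$ against the line limits, $R_t/p_o$ becomes the period-$t$ Lagrangian residual of \eqref{p:dcOPFstorage}. In that residual the storage terms appear through $\pi_{s,t}$ only. The storage coupling enters via \eqref{p:combstor:api}--\eqref{p:combstor:ba}, which is where $\alpha_{s,t}$ links adjacent periods.

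My plan is to use the frozen-injection reduction here. Because $\hat P^S$ is fixed, the per-period LP is an ordinary \ref{p:dcOPF}, and $(\pi_{\cdot,t},\gamma^\pm_{\cdot,t},\rho^\pm_{\cdot,t},z_{\cdot,t})$ is dual feasible for it. This holds because \eqref{p:combstor:gd}, \eqref{p:combstor:fd} and the $\theta$-dual constraint are period-local, and complementary slackness holds with the primal period-$t$ solution. Hence strong duality holds for each period's \ref{p:dcOPF} separately, giving $R_t=0$. If this fails, that is, if the $\beta^\pm E_{s,t}$ terms arising from the recursive expansion of $\alpha$ do not vanish period by period, I would fall back on the interpretation the corollary explicitly permits. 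That is to recompute the LMEs from the single-period combined model with storage treated as a fixed load or zero-emission generator, and then invoke \Cref{thm:carbonfootprint} directly in each period.
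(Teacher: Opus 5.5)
Your argument is correct, and it takes a different route from the paper. The paper gives no separate proof. Its justification is the sentence inside the corollary: freeze the storage net withdrawal, treat it as a fixed load or a zero-emission generator, and invoke \Cref{thm:carbonfootprint} period by period. That is essentially your fallback. Read literally, it proves per-period additivity for the LMEs of the frozen single-period combined model, and those need not equal the multiperiod $LME_{i,t}$ of \Cref{sec:storage}, which price intertemporal substitution through storage. For instance, take efficiency $\eta<1$ and storage absorbing the surplus of a capacity-bound renewable behind a congested line. The multiperiod LME at the renewable node is then $\eta^2$ times the fossil intensity, while the frozen one is the fossil intensity itself. Your direct computation avoids this issue. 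The non-$p_o$ part collapses to $\sum_g\sigma_gP^G_{g,t}$ using only period-local constraints. $R_t$ vanishes because the period-$t$ inner duals are dual feasible and complementary for the frozen single-period DCOPF, and complementary slackness there follows from the combined model's strong-duality row plus weak duality. So you prove the claim for the accounts the paper actually defines. Your hedge is unnecessary: the $\beta^\pm E_{s,t}$ terms arise only in the paper's detour through \eqref{p:combstor:api}--\eqref{p:combstor:ba}, which your route never uses. In fact $R_t=0$ does not even need per-period strong duality. Multiply the period-$t$ rows of \eqref{p:combstor:fb} by $\pi_{i,t}$, sum over $i$, and use \eqref{p:combstor:fd}, \eqref{p:combstor:kvl} and the inner $z$-constraint:
\[
R_t/p_o=\sum_{\ell=(i,j)}\bigl(\rho^+_{\ell,t}+\rho^-_{\ell,t}-\pi_{i,t}+\pi_{j,t}\bigr)f_{\ell,t}=-\sum_{\ell=(i,j)}z_{\ell,t}B_\ell(\theta_{i,t}-\theta_{j,t})=0 .
\]
Complementary slackness is needed only to replace $|f_{\ell,t}|$ by $f_{\ell,t}$ in the SCI terms. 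Summing your per-period identity over $t$ also recovers \Cref{eq:carbonfootprintstorage} without the recursive $\alpha$ expansion.
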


\begin{proof}{of \Cref{thm:stor-fft}}
We proceed by contradiction. Let $\hat{X} = (\hat{P}^G, \hat{P}^{S+}, \hat{P}^{S-}, \hat{f}, \hat{\theta})$ denote the arbitrary optimal primal solution to \ref{p:storageCOPF} used to derive the pricing parameters $\hat{LME}$ and $\hat{LMP}$. Let $X = (P^G, P^{S+}, P^{S-}, f, \theta)$ be a candidate solution satisfying the conditions of Theorem \ref{thm:stor-fft}.

Assume for the sake of contradiction that $X$ is not an optimal solution to \ref{p:storageCOPF}. Since $X$ satisfies the primary economic optimality conditions (implied by the First Fundamental Theorem of Welfare Economics given $\hat{LMP}$), this non-optimality implies strictly higher system emissions:
\begin{equation}
    \sum_{g \in G} \sigma_g P_g^G > \sum_{g \in G} \sigma_g \hat{P}_g^G \implies \sum_{g \in G} \sigma_g \Delta P_g^G > 0.
\end{equation}

First, we establish the optimality of the decentralized storage response. From \Cref{thm:stor-sft}, the  solution $(\hat{P}^{S+}, \hat{P}^{S-})$ is necessarily a minimizer of the decentralized storage problem \eqref{eq:stor-dec}. By hypothesis, the candidate solution $(P^{S+}, P^{S-})$ is also an optimal solution to \eqref{eq:stor-dec}. Therefore, they must share the same optimal objective value:
\begin{equation} \label{eq:stor-cost-equality}
    \sum_{t \in T} \sum_{s \in S} \hat{LME}_{s,t} (P_{s,t}^{Net}) = \sum_{t \in T} \sum_{s \in S} \hat{LME}_{s,t} (\hat{P}_{s,t}^{Net}) \implies \sum_{t \in T} \sum_{s \in S} \hat{LME}_{s,t} \Delta P_{s,t}^{Net} = 0
\end{equation}
where $P^{Net} = P^{S+} - P^{S-}$.

Now, consider the decentralized generator response. Following the logic of \Cref{thm:fft}, for any generator maximizing its carbon profit given $\hat{LME}$, we have the inequality:
\begin{equation} \label{eq:gen-ineq}
    \sum_{g \in G} \sigma_g \Delta P_g^G \leq \sum_{g \in G} \hat{LME}_g \Delta P_g^G
\end{equation}

We next examine network feasibility. The dual variables $\hat{LME}$ are constructed from the Lagrange multipliers of the flow balance and transmission constraints. For any feasible change in injections satisfying \eqref{eq:stor:voltage}-\eqref{eq:stor:fb}, the value of the net injection vector priced at $\hat{LME}$ must be non-positive.
\begin{equation}
    \sum_{i \in N} \hat{LME}_i \Delta Inj_i \le 0
\end{equation}
Substituting the components of the net injection $\Delta Inj = \Delta P^G - \Delta P^{Net}$:
\begin{equation} \label{eq:net-feas}
    \sum_{g \in G} \hat{LME}_g \Delta P_g^G - \sum_{s \in S} \hat{LME}_s \Delta P_s^{Net} \le 0
\end{equation}
Substituting the storage optimality result \eqref{eq:stor-cost-equality} into \eqref{eq:net-feas} yields:
\begin{equation}
    \sum_{g \in G} \hat{LME}_g \Delta P_g^G \le 0
\end{equation}
Finally, combining this with the generator inequality \eqref{eq:gen-ineq}:
\begin{equation}
    \sum_{g \in G} \sigma_g \Delta P_g^G \le \sum_{g \in G} \hat{LME}_g \Delta P_g^G \le 0
\end{equation}
This implies $\sum \sigma_g \Delta P_g^G \le 0$, which contradicts the assumption that $X$ yields strictly higher emissions. Thus, $X$ must be an optimal solution to \ref{p:storageCOPF}.
\end{proof}

\begin{proof}{of \Cref{thm:stor-sft}}

Let us define $\hat{P}^G,\hat{P}^{S-},\hat{P}^{S-},\hat{f},\hat{\theta}, \hat{\pi}, \hat{\gamma}^+,\hat{\gamma}^-,\hat{\rho}^+,\hat{\rho}^-,\hat{\alpha},\hat{\beta}^+,\hat{\beta}^-$, $\hat{p}$ to be the optimal primal and dual solutions to the \ref{p:storageCOPF} which correspond to the $\hat{LME}$ and $\hat{LMP}$ values.

We begin by noting that the economic account of an arbitrary storage operator, $s\in S$ associated with the solution, $(\hat{P}^{S-}_s,\hat{P}^{S+}_s)$ will reside in $P^{S*}_s$, as the economic dispatch problem with storage is known to support a decentralized equilibrium \cite{jiang2023duality}.

We now examine the associated carbon account of the storage operator
\begin{equation}
    \sum_{t \in T}\hat{LME_{s,t}} \cdot (\hat{P}^{S+}_{s,t}-\hat{P}^{S-}_{s,t}).
\end{equation}
Where expanding the definition of LME simplifies to
\begin{equation}
    \sum_{t \in T}(\hat{p}_o \hat{\pi}_{s,t}+\hat{p}_{\pi,s,t}) \cdot (\hat{P}^{S+}_{s,t}-\hat{P}^{S-}_{s,t}).
\end{equation}
Using the same recursive substitution as in the proof of \Cref{eq:carbonfootprintstorage} gives
\begin{equation}
 \sum_{t \in T}(\hat{p}_o (\sum_{t'=t+1}^T( \hat{\beta}^+_{s,t'}+\hat{\beta}^{-}_{s,t'}))+\sum_{t'=t+1}^T(\hat{p}^+_{\beta,s,t'}+\hat{p}^-_{\beta,s,t'})) \cdot (\eta \hat{P}^{S+}_{s,t}-\frac{1}{\eta}\hat{P}^{S-}_{s,t}).
\end{equation}
Which the SOC definition simplifies to
\begin{equation}
    \sum_{t \in T}(\hat{p}_o (\hat{\beta}^+_{s,t}+\hat{\beta}^{-}_{s,t})+(\hat{p}^+_{\beta,s,t}+\hat{p}^-_{\beta,s,t})) \cdot \hat{E}_{s,t}.
\end{equation}
And finally, from complementary slackness of $\hat{\beta}^+$, $\hat{\beta}^-$, $\hat{p}_{\beta}^+$, and $\hat{p}_{\beta}^-$ we get
\begin{equation}
    \sum_{t \in T}(\hat{p}_o (\hat{\beta}^+_{s,t})+(\hat{p}^+_{\beta,s,t})) \cdot E^{max}_{s,t}.
\end{equation}
Now, to show that this is indeed a minimizing solution to \ref{eq:stor-dec} we can consider any arbitrary solution to \ref{eq:stor-dec} defined by $(P^{S+}_s, P^{S-}_s) \in P^{S*}_s$:
\begin{align}
    \sum_{t \in T}\hat{LME}_{s,t} \cdot (P^{S+}_{s,t}-P^{S-}_{s,t}) &= \sum_{t \in T}(\hat{p}_o \hat{\pi}_{s,t}+\hat{p}_{\pi,s,t}) \cdot (P^{S+}_{s,t}-P^{S-}_{s,t}). 
\end{align}
Noting that $\frac{1}{\eta}\hat{\alpha}_{s,t} \geq \hat{\pi}_{s,t}\geq \eta \hat{\alpha}_{s,t}$ and $\frac{1}{\eta}\hat{p}_{\alpha,s,t} \geq \hat{p}_{\pi,s,t}\geq \eta \hat{p}_{\alpha,s,t}$ leads to
\begin{align}
    \sum_{t \in T}\hat{LME}_{s,t} \cdot (P^{S+}_{s,t}-P^{S-}_{s,t}) &\geq \sum_{t \in T}(\hat{p}_o \hat{\alpha}_{s,t}+\hat{p}_{\alpha,s,t}) \cdot (\eta P^{S+}_{s,t}-\frac{1}{\eta}P^{S-}_{s,t}), \\ 
    &= \sum_{t \in T}(\hat{p}_o (\sum_{t'=t}^T( \hat{\beta}^+_{s,t'}+\hat{\beta}^{-}_{s,t'}))+\sum_{t'=t}^T(\hat{p}^+_{\beta,s,t'}+\hat{p}^-_{\beta,s,t'})) \cdot (\eta P^{S+}_{s,t}-\frac{1}{\eta}P^{S-}_{s,t}), \\
    &= \sum_{t \in T}(\hat{p}_o (\hat{\beta}^+_{s,t}+\hat{\beta}^{-}_{s,t})+(\hat{p}^+_{\beta,s,t}+\hat{p}^-_{\beta,s,t})) \cdot E_{s,t}, \\
    &\geq \sum_{t \in T}(\hat{p}_o (\hat{\beta}^+_{s,t})+(\hat{p}^+_{\beta,s,t})) \cdot E^{max}_{s,t}.
\end{align}
Which, as shown above, is precisely the value of the storage account for $s$ associated with ($\hat{P}^{S-}_s,\hat{P}^{S+}_s$). Thus, the storage account for $s$ associated with ($\hat{P}^{S-}_s,\hat{P}^{S+}_s$) must indeed be a minimizing solution to \ref{eq:stor-dec}.

\end{proof}

\section{Sensitivity Analysis Lemmas}\label{sec:sensan}
In this section we present a key sensitivity analysis lemma in the style of the envelope theorem which we rely upon for the analysis of the \ref{p:OPF} and the subsequent definitions of LMEs and SCIs. Consider a convex optimization problem described in the form
    \begin{align*}
        \min \quad & c(x)\\
        \text{s.t. } \quad 
        &Ax = b, \\
        &x \geq 0,
    \end{align*}
with a proper, differentiable, closed convex function $c:\R^n\to\R$ and a matrix $A\in\R^{m\times n}$. Define $P(\alpha)$ to be the perturbed problem created by replacing $A$ by $(A+\alpha G)$ for an arbitrary matrix $G\in \R^{m\times n}$ and $\alpha\in\R$. That is,
    \begin{align*}
  P(\alpha):\quad z(\alpha)=\min \quad & c(x)\\
                  \text{s.t. } \quad  & (A+\alpha G) x = b, \\
                                      & x \geq 0.
    \end{align*}
Define the Lagrangian dual problem of $P(\alpha)$ as
\begin{align}
    \sup_{p\in\R^m, \lambda\in\R^n_+}\;\inf_{x\in\R^n} \; c(x) - p^\top((A+\alpha G)x-b) - \lambda^\top x.
\end{align}
\begin{lemma}\label{thm:envelope}
     Then for any set of primal optimal solutions $x$ and dual optimal solutions $p, \lambda$ that satisfy the following assumptions:
    \begin{enumerate}
        \item the directional derivative, $\frac{d z}{d \alpha^+}$ exists and is finite valued;
        \item there exists a convergent sequence $\{\alpha^{(i)}\}$ satisfying $\alpha^{(i)} > \alpha, \lim_{i\rightarrow \infty}\alpha^{(i)} = \alpha$ with a corresponding sequence of primal and dual optimal solutions, $x^{(i)}, p^{(i)}, \lambda^{(i)}$ such that $\lim_{i \rightarrow \infty}x^{(i)} = x$, $\lim_{i \rightarrow \infty}p^{(i)} = p$, and $\lim_{i \rightarrow \infty}\lambda^{(i)} = \lambda$,
    \end{enumerate}
   the following derivative expression holds
    \begin{equation}
        z'(\alpha)^+=-p^T G x.
    \end{equation}
\end{lemma}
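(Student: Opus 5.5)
The strategy is a sandwich argument. I will bound the difference quotient $(z(\alpha^{(i)})-z(\alpha))/(\alpha^{(i)}-\alpha)$ from above and from below by Lagrangian quantities. Both bounds will converge to $-p^\top G x$ along the sequence supplied by assumption 2. Assumption 1 then says the full one-sided limit exists, so the limit along this particular sequence identifies $z'(\alpha)^+$. Write $L_\beta(x,p,\lambda)=c(x)-p^\top((A+\beta G)x-b)-\lambda^\top x$. I will assume strong duality and a zero duality gap at each optimal pair $(x^{(i)},p^{(i)},\lambda^{(i)})$ for $P(\alpha^{(i)})$ and at $(x,p,\lambda)$ for $P(\alpha)$; this is implicit in calling them primal and dual optimal.

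\textbf{Upper bound.} Since $(p,\lambda)$ is dual optimal for $P(\alpha)$ with $\lambda\ge0$, weak duality at $\alpha$ gives $z(\alpha)=\inf_y L_\alpha(y,p,\lambda)\le L_\alpha(x^{(i)},p,\lambda)$. The point $x^{(i)}$ is feasible for $P(\alpha^{(i)})$, so $(A+\alpha G)x^{(i)}-b=-(\alpha^{(i)}-\alpha)Gx^{(i)}$. Substituting,
\begin{equation*}
z(\alpha)\le c(x^{(i)})+(\alpha^{(i)}-\alpha)\,p^\top G x^{(i)}-\lambda^\top x^{(i)}\le z(\alpha^{(i)})+(\alpha^{(i)}-\alpha)\,p^\top G x^{(i)},
\end{equation*}
where the last step uses $\lambda^\top x^{(i)}\ge0$. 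This yields the lower estimate $(z(\alpha^{(i)})-z(\alpha))/(\alpha^{(i)}-\alpha)\ge -p^\top G x^{(i)}$.

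\textbf{Symmetric bound.} Swap the roles of the two problems. Use the dual pair $(p^{(i)},\lambda^{(i)})$, which is optimal for $P(\alpha^{(i)})$, and evaluate the Lagrangian at $x$, which is feasible for $P(\alpha)$. This gives $z(\alpha^{(i)})\le z(\alpha)-(\alpha^{(i)}-\alpha)\,(p^{(i)})^\top G x$. Dividing by $\alpha^{(i)}-\alpha>0$ preserves the inequality direction, so
\begin{equation*}
-p^\top G x^{(i)}\le \frac{z(\alpha^{(i)})-z(\alpha)}{\alpha^{(i)}-\alpha}\le -(p^{(i)})^\top G x.
\end{equation*}

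\textbf{Passing to the limit.} Let $i\to\infty$. Because $x^{(i)}\to x$ and $p^{(i)}\to p$, both outer terms tend to $-p^\top G x$, and the difference quotients along $\alpha^{(i)}$ converge to this value. By assumption 1 the right derivative exists, so every sequence $\alpha^{(i)}\downarrow\alpha$ must produce the same limit. Hence $z'(\alpha)^+=-p^\top Gx$. Finiteness of $z$ along the sequence follows from the existence of the optimal solutions.

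\textbf{Main obstacle.} The delicate step is justifying the weak-duality inequality $z(\alpha)\le L_\alpha(x^{(i)},p,\lambda)$. It requires that $(p,\lambda)$ attain the dual value $z(\alpha)$, that is, zero duality gap with dual attainment. For linear $c$ this is standard LP duality. For general convex differentiable $c$ I would instead derive it from the KKT conditions at $x$: $\nabla c(x)=(A+\alpha G)^\top p+\lambda$, together with $\lambda^\top x=0$. These imply that $x$ minimizes the convex function $L_\alpha(\cdot,p,\lambda)$, so $L_\alpha(y,p,\lambda)\ge L_\alpha(x,p,\lambda)=c(x)=z(\alpha)$ for every $y$. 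The same argument applies at $\alpha^{(i)}$. This is what I would check first, since convexity of $L$ in $x$ is exactly what makes the sandwich work.
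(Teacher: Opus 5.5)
Your sandwich argument is correct, and it takes a genuinely different route from the paper's.

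The paper works with a single difference quotient. It Taylor-expands $c(x^{(i)})-c(x)$ around $x$ and substitutes the stationarity condition $\nabla c(x)=(A+\alpha G)^\top p+\lambda$. It then sends the feasibility piece $(A+\alpha G)(x^{(i)}-x)/(\alpha^{(i)}-\alpha)=-Gx^{(i)}$ to $-Gx$. Finally it kills the piece $\lambda^\top x^{(i)}/(\alpha^{(i)}-\alpha)$ by noting that $\lambda_k>0$ forces $\lambda^{(i)}_k>0$, and hence $x^{(i)}_k=0$, for large $i$. That last step is where it needs $\lambda^{(i)}\to\lambda$.

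You instead use dual optimality at both $\alpha$ and $\alpha^{(i)}$ to trap the quotient between $-p^\top Gx^{(i)}$ and $-(p^{(i)})^\top Gx$. The complementary-slackness terms then only need a sign, not to vanish. This buys several things:
\begin{itemize}
\item You never use $\lambda^{(i)}\to\lambda$.
\item The sandwich only needs the two dual pairs to be optimal with zero duality gap. Differentiability of $c$ enters your proof only to recover that gap from KKT.
\item You avoid the paper's step asserting that the Taylor remainder divided by $\alpha^{(i)}-\alpha$ vanishes. The paper attributes this only to finiteness of $dz/d\alpha^+$, which by itself says nothing about how fast $x^{(i)}\to x$.
\end{itemize}

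Your upper bound in fact repairs that step. Write $R_i\ge 0$ for the remainder, which is nonnegative by convexity. Combining the paper's expansion with your bound gives $(\lambda^\top x^{(i)}+R_i)/(\alpha^{(i)}-\alpha)\le p^\top Gx^{(i)}-(p^{(i)})^\top Gx\to 0$. Both nonnegative terms are therefore forced to zero.

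For linear $c$, which is the LP case actually used to define the LMEs, the remainder is identically zero. There the two proofs are equally sound, and the paper's version is the more direct envelope-theorem computation. In the general convex case, yours is the more rigorous of the two.
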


\begin{proof}
    From the first-order optimality conditions (KKT conditions) of the Lagrangian at the optimal solution $x$ for parameter $\alpha$, we have:
    \begin{align}
        \nabla_x L(x, p, \lambda, \alpha) &= 0 \\
        \implies \nabla c(x) &= (A+\alpha G)^T p + \lambda. \label{eq:kkt_grad}
    \end{align}
    We now evaluate the directional derivative of the value function using the definition of the limit. By Taylor expansion of the objective function $c(x)$, we can approximate the difference in the objective values:
    \begin{align}
        z'(\alpha)^+ &= \lim_{i \rightarrow \infty} \frac{z(\alpha^{(i)})-z(\alpha)}{\alpha_i-\alpha} \\
        &= \lim_{i \rightarrow \infty} \frac{c(x^{(i)})-c(x)}{\alpha^{(i)}-\alpha} \\
        &= \lim_{i \rightarrow \infty} \frac{\nabla c(x)^T (x^{(i)}-x) + o(\|x^{(i)}-x\|)}{\alpha_i-\alpha},
    \end{align}
    where $\lim_{i\rightarrow \infty}\frac{o(||x^{(i)}-x||)}{\alpha^{(i)}-\alpha}=0$ from the finite valued property of $\frac{dz}{d\alpha^+}$. Substituting the expression for $\nabla c(x)$ from Eq. \eqref{eq:kkt_grad}:
    \begin{align}
        z'(\alpha)^+ &= \lim_{i \rightarrow \infty} \frac{\left((A+\alpha G)^T p + \lambda\right)^T (x^{(i)}-x)}{\alpha^{(i)}-\alpha} \\
        &= \lim_{i \rightarrow \infty} \frac{(p)^T (A+\alpha G)(x^{(i)}-x) + (\lambda)^T (x^{(i)}-x)}{\alpha^{(i)}-\alpha}. \label{eq:deriv_split}
    \end{align}
    We analyze the feasibility term $(A+\alpha G)(x^{(i)}-x)$ separately. Since both $x^{(i)}$ and $x$ are feasible for their respective problems, we have:
    \begin{align}
        (A+\alpha^{(i)} G)x^{(i)} &= b \\
        (A+\alpha G)x &= b.
    \end{align}
    Rewriting the first equation by splitting $\alpha_i$:
    \begin{align}
        (A+\alpha G + (\alpha^{(i)} - \alpha)G)x^{(i)} &= b \\
        (A+\alpha G)x^{(i)} + (\alpha_i - \alpha)G x^{(i)} &= b.
    \end{align}
    Subtracting the feasibility condition for $x^*$ from this equation:
    \begin{align}
        (A+\alpha G)(x^{(i)}-x) + (\alpha_i - \alpha)G x^{(i)} &= 0 \\
        \frac{(A+\alpha G)(x^{(i)}-x)}{\alpha_i-\alpha} &= -G x^{(i)}.
    \end{align}
    Taking the limit as $n \to \infty$ (where $x^{(i)} \to x$):
    \begin{equation}
        \lim_{i \rightarrow \infty} \frac{(A+\alpha G)(x^{(i)}-x)}{\alpha^{(i)}-\alpha} = -G x. \label{eq:feas_limit}
    \end{equation}
    Now, returning to the term involving $\lambda$: by complementary slackness, $\lambda^T x = 0$. Since $\lambda \geq 0$ and $x^{(i)} \geq 0$, we have $\lambda^T x^{(i)} \geq 0$. Thus, $\lambda^T (x^{(i)} - x) = \lambda^T x^{(i)} \geq 0$. With such a non-negativity property we can now expand the expression to 
    \begin{equation}
         \lim_{i \rightarrow \infty} \sum_{k\in [n]}\frac{\lambda _k x^{(i)}_k}{\alpha^{(i)}-\alpha} = \sum_{k\in [n]}\lim_{i \rightarrow \infty}\frac{\lambda_k x^{(i)}_k}{\alpha^{(i)}-\alpha}.
    \end{equation}
    Now, fix an arbitrary $k \in [n]$ and consider $\lambda_k x^{(i)}_k$, i.e. the $kth$ element of $x^{(i)}$ multiplied by the $kth$ element of $\lambda$. Note that if $\lambda_k>0$, $\lim_{i \rightarrow \infty}\lambda_i=\lambda$ implies that there exists some $C>0$ such that $\forall i \geq C, \lambda^{(i)}_k>0$ which from complementary slackness implies $x^{(i)}_k=0$. Additionally, from its definition $\forall n, \alpha^{(i)}>\alpha$ allowing us to complete our evaluation of the $\lambda$ term as 
    \begin{equation} \label{eq:lmbdterm}
        \sum_{k\in [n]}\lim_{i \rightarrow \infty}\frac{\lambda_k x^{(i)}_k}{\alpha^{(i)}-\alpha} = 0.
    \end{equation}
    Plugging \eqref{eq:feas_limit} and \eqref{eq:lmbdterm} back into \eqref{eq:deriv_split} yields:
    \begin{align}
        z'(\alpha)^+ &= p^T (-G x) + 0 \\
        &= -p^T G x.
    \end{align}
\end{proof}
When applied to a linear program where we consider perturbations in the coefficient matrix, \Cref{thm:envelope} specifies to \Cref{thm:freund} which closely echoes that presented in \cite{freund2009postoptimal}. As the \ref{p:OPF} and the \ref{p:storageCOPF} presented in this paper are both linear programs \Cref{thm:freund} works to describe LMEs in this manner, however, \ref{thm:envelope} enables the generalization of such methods to nonlinear convex cost functions.
\section{Properties of the Combined Method}\label{sec:combinedProp}
In this section we study the properties of a more general form of the combined model, designed to reflect only the two-tiered optimization model structure. Without loss of generality we assume the standard DC OPF problem to be expressed in the form:
\begin{align}
    \min \quad &c^T x \tag{Primal LP} \label{p:PrLP}\\
    \text{s.t. }\notag
    &Ax = b, &[p],\\
    &x \geq 0,
\end{align} 
where $x\in \mathbb{R}^n$ is a vector of our decision variables, $A\in\mathbb{R}^{m\times n}$ is a matrix, and $c\in \mathbb{R}^n$ and $b\in \mathbb{R}^m$ are vectors of constants. This yields a combined problem of the following form: 
\begin{align}
    \min \quad &\hat{c}^T x \tag{Combined LP} \label{p:CLP}\\
    \text{s.t. }
    &Ax = b, & [\pi_P],\label{eq:p} \\
    &x \geq 0, \label{eq:nn}\\
    &A^Tp \leq c, &[\pi_D],\label{eq:d}\\
    &cx-b^Tp = 0, &[\pi_O],\label{eq:o}
\end{align}
where $\hat{c}$ reflects the carbon intensities of the generators. 

\begin{lemma}\label{s:CombinedEQ}
    Under assumptions of uniqueness and nondegeneracy, the LME definition of \Cref{thm:LME} is equivalent to the LME definition of \cite{ruiz2010analysis}
\end{lemma}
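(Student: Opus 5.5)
Under uniqueness and nondegeneracy of the \ref{p:dcOPF}, the optimal face $X^*(P^D)$ is a single point $x^*(P^D)$. Moreover, the optimal basis of the \ref{p:PrLP} stays optimal and nondegenerate in a neighborhood of the given demand. The plan is therefore to compute $dE/dP^D_i$ in two independent ways and equate them. Since the \ref{p:OPF} is feasible only on the singleton $X^*(P^D)$, we have $E(P^D)=\hat c^T x^*(P^D)$. This is also the value of the \ref{p:CLP}, so \Cref{thm:LME} applies and gives $dE/dP^D_i = p_{\pi,i}+p_o\pi_i$. Corollary \ref{corr:unique} guarantees that this derivative exists. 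It then suffices to show that the same derivative equals the sensitivity formula of \cite{ruiz2010analysis}.

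For the second computation I would use the fixed basis $\mathcal B$ of the \ref{p:PrLP}. On the neighborhood, $x^*_{\mathcal B}(P^D)=A_{\mathcal B}^{-1}b(P^D)$ and the nonbasic variables stay at their bounds. Hence $dx^*/dP^D_i = A_{\mathcal B}^{-1}\partial b/\partial P^D_i$ on the basic components and $0$ elsewhere. In DC-OPF language the basic variables are the following: the marginal generators (those strictly between $P^{min}_g$ and $P^{max}_g$), the flows and angles, and the slacks of untight lines. The binding constraints on the perturbation are of three kinds: total power balance, tight-line flows held at $\pm f^{max}_\ell$, and fixed outputs of nonmarginal generators. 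Eliminating angles and flows through the PTDF matrix $\Psi$ (flows $=\Psi\cdot$ net injections, with a slack bus) turns the reduced system into two conditions. First, $\vec 1^T\Delta P^G_{marg}=1$, which is power balance. Second, for each tight line, $\tilde\Psi\,\Delta P^G_{marg} = \tilde\psi_i$, which says the tight-line flows are unchanged. Inverting this square system (square by nondegeneracy) gives the response vector $[\vec 1\mid\tilde\Psi^T]^{-1}[\vec 1;\tilde\psi_i]$. Multiplying by $\sigma$ on the marginal units yields the \cite{ruiz2010analysis} LME. This is exactly $\hat c^T\,dx^*/dP^D_i$.

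Equating the two derivatives of the same differentiable function proves the lemma. As an optional cross-check, I would also verify the identity directly from the KKT system of the \ref{p:CLP}. Uniqueness of the dual of the \ref{p:PrLP} fixes $p$ locally, and the multipliers $(\pi_P,\pi_D,\pi_O)$ can be solved from the stationarity conditions. Then $\pi_{P,i}+\pi_O p_i$ should reduce to $\hat c_{\mathcal B}^TA_{\mathcal B}^{-1}e_i$.

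The main obstacle will be the bookkeeping in the basis-to-PTDF reduction. Specifically, I need to show that the reduced basis matrix, after eliminating $\theta$ and $f$, is exactly $[\vec 1\mid\tilde\Psi^T]$ up to transposition and sign, including the sign conventions for the two flow directions. I also need to check that nondegeneracy makes this matrix square and invertible. I would handle this by writing $A_{\mathcal B}$ in block form (generator, flow, and angle columns) and taking a Schur complement over the angle/flow block, which is invertible for a connected network with a reference bus. A secondary subtlety is justifying that $E$ equals $\hat c^T x^*$ in a full neighborhood, so that the differentiability used in \Cref{thm:LME} matches the basis-wise derivative. This follows from the basis remaining primal feasible under small perturbations of $b$, by nondegeneracy, and dual feasible, since the costs do not change.
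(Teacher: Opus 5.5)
The gap is in your first computation. \Cref{thm:LME} and \Cref{corr:unique} both assume that the Combined Model has a unique primal solution and a unique dual solution. Uniqueness and nondegeneracy of the DCOPF give you the primal part: the feasible set is the singleton $X^*(P^D)$ paired with the unique DCOPF dual. They do not give you the dual part. In fact the dual of the Combined Model is never unique.

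The duality gap $c^Tx-b^Tp$ is nonnegative on the product of the DCOPF primal and dual feasible sets. So the multiplier $\pi_O$ on the gap constraint behaves like an exact-penalty parameter: once one penalty level is optimal, every stronger one is optimal too. The other multipliers adjust along with it, with $\pi_D$ the corresponding multiple of $x^*$ and $\pi_P=(A_B^T)^{-1}(\hat c_B-\pi_O c_B)$. Hence the step ``so \Cref{thm:LME} applies'' is not available. That leaves the identity $dE/dP^D_i=p_{\pi,i}+p_o\pi_i$ unproved, and it is the only place your argument touches the Combined Model multipliers at all. This is not a formality. \Cref{def:extlme} allows any optimal multiplier, so what must be shown is exactly that $\pi_{P,i}+\pi_O p_i$ takes the same value along this whole family.

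The repair is to make your ``optional'' cross-check the main argument; it is essentially the paper's proof. The paper writes down one explicit dual point, $\pi_D=x^*$, $\pi_O=1$, $\pi_P=(A_B^T)^{-1}(\hat c-c)$, and evaluates $\pi_O p_i+(\pi_P)_i=((A_B^T)^{-1}\hat c)_i$. You can do slightly better than a single point, but do not expect to solve for the multipliers uniquely.

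\begin{itemize}
\item Primal nondegeneracy gives $x^*_B>0$, so complementary slackness forces $A_B^T\pi_P+\pi_O c_B=\hat c_B$ for every optimal multiplier.
\item Since $p$ is DCOPF-dual optimal, it satisfies $A_B^Tp=c_B$.
\item Combining the two gives $\pi_P+\pi_O p=(A_B^T)^{-1}\hat c_B$, that is, $\pi_{P,i}+\pi_O p_i=\hat c_B^TA_B^{-1}e_i$, independently of $\pi_O$.
\end{itemize}

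This also avoids checking the nonbasic dual constraints for any particular choice of $\pi_O$. Together with your second computation, it proves the lemma without \Cref{thm:LME}.

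If you prefer to keep the two-derivative structure, replace \Cref{thm:LME} by \Cref{thm:envelope} applied to a fixed optimal multiplier. Its convergent-sequence hypothesis holds here. Keep $\pi_O$ and $\pi_P$ fixed and update $\pi_D=\pi_O x^*(b)$; the multiplier then stays optimal for all nearby demands while $x^*_B=A_B^{-1}b$ varies continuously. You would need to adapt the lemma to the simultaneous right-hand-side and coefficient perturbation that $P^D$ induces.

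The rest of your plan is sound. The basis-to-PTDF reduction is more careful than the paper, which simply asserts that the rows of $(A_B^T)^{-1}$ describe the generation response. The transposition you anticipate is real, since your reduced system matrix is $[\vec 1\mid\tilde\Psi^T]^T$. Finally, \Cref{corr:unique} is unnecessary: basis stability already makes $E$ differentiable.
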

\begin{proof}
We begin by describing the Lagrangian dual problem to the \ref{p:CLP}: 
\begin{align}
    \text{maximize} \quad &c^T \pi_D + b^T \pi_P \tag{Combined LP Dual} \label{p:DCLP}\\
    \text{s.t. }
    &A\pi_D - b\pi_O=0, \\
    &\pi_D \geq 0, \\
    &A^T\pi_P+c\pi_O \leq \hat{c},
\end{align}
where $\pi_O$ corresponds to \eqref{eq:o}, $\pi_D$ corresponds to \eqref{eq:d}, and $\pi_P$ corresponds to \eqref{eq:p}. Note that LP strong duality implies at optimality, $\hat{c}x^*=c^T \pi_D + b^T \pi_P$. where $x^*$ is a basic optimal solution to the \ref{p:PrLP} and from our assumption of uniqueness is subsequently a solution to the \ref{p:CLP}.
We now consider setting $\pi_D=x^*$, $\pi_O=1$, and $\pi_P = (A_B^T)^{-1}(\hat{c}-c)$ padded with zeros where $A_B$ corresponds to the columns of $A$ where $x^*$ is nonzero ($A_B$ must exist and form a basis from our uniqueness assumption). 
\begin{align}
    A^T\pi_P&=A_B^T\pi_P,\\
    &=\hat{c}-c, \\
    c^T\pi_D+b^T\pi_P &= c^Tx^*+(A_Bx^*_B)^T\pi_P, \\
    &= c^Tx^* + (x^*)^T(A_B^T\pi_P), \\
    &= c^Tx^*+(x^*)^T(\hat{c}-c), \\
    &= (x^*)^T\hat{c}.
\end{align}
Thus the objective values of \ref{p:CLP} and \ref{p:DCLP} are equivalent under such an assignment of variables and we have indeed found a description of an optimal dual solution. Applying Definition \ref{def:extlme} thus gives the following expression for a nodal LME:
\begin{align}
    \frac{d \ref{p:CLP}}{d P^D_i} &= \pi_O p_i + (\pi_P)_i, \\
    &= ((A_B^T)^{-1}(c))_i +((A_B^T)^{-1}(\hat{c}-c))_i, \\
    &= ((A_B^T)^{-1}(\hat{c}))_i,
\end{align}
where noting that $((A_B^T)^{-1}_i$ precisely describes the generation response with respect to a net injection change at node $i$ implies the equivalence.
 
\end{proof}
\begin{lemma}\label{lma:basiscontain}
    Any basis for the \ref{p:CLP} contains a basis of the \ref{p:PrLP}
\end{lemma}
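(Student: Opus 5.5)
The plan is to put the \ref{p:CLP} in standard form and read a basis of it off the column structure of its constraint matrix. Write $p = p^+ - p^-$ with $p^\pm \ge 0$ and add slacks $s \ge 0$ to \eqref{eq:d}, so that $A^Tp + s = c$. The variables are then $(x, p^+, p^-, s)$, and the constraint rows are the $m$ rows of \eqref{eq:p}, the $n$ rows of \eqref{eq:d}, and the single row \eqref{eq:o}. In this coordinate system the $x$-columns meet rows \eqref{eq:p} (block $A$) and row \eqref{eq:o} (entries $c^T$), but not rows \eqref{eq:d}. The $(p^\pm,s)$-columns meet rows \eqref{eq:d} (blocks $\pm A^T$, $I$) and row \eqref{eq:o} (entries $\mp b^T$, $0$), but not rows \eqref{eq:p}. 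Throughout I would assume, as in \Cref{s:CombinedEQ}, that $A$ has full row rank $m$; otherwise I first delete redundant rows of \eqref{eq:p}, which changes neither problem. I will interpret ``contains a basis of the \ref{p:PrLP}'' as follows: the set $B_x$ of $x$-columns inside a basis $\mathcal{B}$ of the \ref{p:CLP} contains $m$ linearly independent columns of $A$.

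First step: let $\mathcal{B}$ be a basis, i.e.\ a set of $m+n+1$ columns whose square submatrix $M$ is nonsingular. Order the rows as (\eqref{eq:p}, \eqref{eq:d}, \eqref{eq:o}) and the columns as ($B_x$, the rest). Then $M$ is block lower-triangular up to the single coupling row:
\[
M=\begin{bmatrix} A_{B_x} & 0\\ 0 & N\\ c_{B_x}^T & r^T\end{bmatrix},
\]
where $N$ collects the chosen $(p^\pm,s)$ columns restricted to rows \eqref{eq:d}.

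Second step: rank counting. Since $M$ is nonsingular, its rows restricted to \eqref{eq:p} are linearly independent. These rows are supported only on the $B_x$ columns, so $\operatorname{rank}(A_{B_x}) = m$. This already gives that $B_x$ contains $m$ independent columns of $A$, which forms a basis of the \ref{p:PrLP}.

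Third step: I also want this basis to be basic feasible whenever $\mathcal{B}$ is feasible, since that is presumably how the lemma is used later. If $\mathcal{B}$ is a feasible basis of the \ref{p:CLP}, the associated basic solution $x$ satisfies $Ax=b$ and $x\ge 0$, with support inside $B_x$. The support columns are linearly independent: otherwise $M$ restricted to rows \eqref{eq:p} and \eqref{eq:o} would fail to have full rank over the support together with the other columns, and I would derive this from nonsingularity of $M$. I then extend the support to $m$ independent columns within $B_x$, which is possible by the rank statement. This yields a primal basic feasible solution whose basis lies in $\mathcal{B}$, and by \eqref{eq:o} together with weak duality it is optimal.

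The main obstacle I expect is making this extension argument precise when $|B_x| > m$. Because of the extra row \eqref{eq:o}, one additional $x$-column can enter the basis, so $B_x$ need not itself be a basis of the \ref{p:PrLP}, only contain one. To handle this I would count dimensions: the rows \eqref{eq:d} force at least $\operatorname{rank}(N)$ of the $(p^\pm,s)$ columns, leaving at most $m+1$ columns in $B_x$. The argument then has to choose the $m$-subset so that it still contains the support, and it must do so in the degenerate case, where the support may be smaller than $m$.
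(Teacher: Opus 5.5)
Your Steps 1--2 are essentially the paper's proof: put the problem in standard form with the block matrix $M$, observe that the rows of \eqref{eq:p} are supported only on $x$-columns, and conclude from nonsingularity of the basis that $\operatorname{rank}A_{B_x}=m$. You are in fact more careful than the paper, which claims that exactly $m$ of the first $n$ columns are selected, whereas, as you note, $|B_x|=m+1$ can occur.

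Your third step is not needed for the lemma. If you keep it, two corrections apply.
\begin{itemize}
\item When $|B_x|=m+1$, independence of the support does not follow from nonsingularity of $M$ alone. For a dependence $d$ with $Ad=0$ supported on $\operatorname{supp}(x)$ you also need $c^Td=0$. This comes from feasibility: $s^Tx=c^Tx-(p^+-p^-)^Tb=0$, so $s_j=0$ on $\operatorname{supp}(x)$ and hence $c^Td=s^Td=0$. Then $(d,0)$ lies in the kernel of the basis matrix, which is the contradiction you want.
\item The ``obstacle'' of extending the support to $m$ independent columns inside $B_x$ is just the standard basis-extension (exchange) lemma for a column set of rank $m$. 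Degeneracy, where the support has fewer than $m$ elements, causes no difficulty.
\end{itemize}
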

\begin{proof}
    We begin by reformatting the \ref{p:CLP} to its standard form:
\begin{align}
    \min \quad &\hat{c}^T x \tag{Combined LP Standard} \label{p:CLPS}\\
    \text{s.t. }
    &Mv = w, \\
    &v \geq 0,
\end{align}
where 
\begin{equation*}
     M=\begin{bmatrix}
        A & 0 & 0 & 0 \\
        0 & A^T & -A^T & I \\
        -c^T &  b^T & -b^T  & 0
    \end{bmatrix},
    v = \begin{bmatrix}
        x \\ p^+ \\ p^- \\ s
    \end{bmatrix},
    w = \begin{bmatrix}
        b \\ c \\ 0
    \end{bmatrix}.
\end{equation*}
Note that any basis, $M_B\in \mathbb{R}^{(m+n+1)\times (n+3m)}$, for the \ref{p:CLPS} must contain a subset of columns of $M$ such that $M_B$ is full rank. Observing the first $m$ rows of $M$, we see that this requires selecting exactly $m$ columns from the first $n$ columns of $M$ such that the first $m$ rows of each column are linearly independent. Thus, the first $m\times m$ rows and columns of $M_B$ must be an independent subset of $A$. Note that this subset multiplied by the basis elements of $x$ must equal $b$ and thus precisely describes a basis for the \ref{p:PrLP}.
 
\end{proof}
\begin{lemma} \label{lma:basisclosed}
    Any basis for \ref{p:PrLP} which corresponds to an optimal solution for \ref{p:CLP} on the interval $b=(t,u)$ also corresponds to an optimal solution for \ref{p:CLP} on the interval $b=[t,u]$.
\end{lemma}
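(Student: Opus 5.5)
The plan is to pass to the limit $b\to t$ (and symmetrically $b\to u$), treating $b$ as moving along a segment $b(\alpha)=b_0+\alpha d$ with $\alpha$ parametrizing $[t,u]$. This matches how Lemma~\ref{lma:basiscontain} is used in Proposition~\ref{prop:contpiece}.

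\textbf{Setup.} Fix a basis $B$ of \ref{p:PrLP} such that, for every $b\in(t,u)$, the basic solution $x_B(b)=A_B^{-1}b$, $x_N=0$ is the $x$-part of an optimal solution of \ref{p:CLP}.

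\textbf{Primal feasibility at the endpoints.} Primal feasibility of this solution for \ref{p:PrLP} means $A_B^{-1}b\ge 0$. This is a finite system of closed linear inequalities in $b$, and it holds on $(t,u)$. By continuity of $b\mapsto A_B^{-1}b$ it therefore holds on $[t,u]$.

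\textbf{Optimality for \ref{p:PrLP} at the endpoints.} Optimality of $x$ for \ref{p:PrLP} means there exists $p$ with $A^Tp\le c$ and $c^Tx=b^Tp$. Since $x$ is optimal for \ref{p:CLP}, a certificate $p(b)$ exists for each interior $b$. Take $p=(A_B^T)^{-1}c_B$, the basic dual for $B$; note this is independent of $b$.
\begin{itemize}
\item Complementary slackness holds automatically, since $x_N=0$.
\item Dual feasibility $A^Tp\le c$ also does not depend on $b$.
\end{itemize}
If some interior certificate differs from the basic one, I will still use the basic dual, provided dual feasibility of $B$ holds. This is the delicate point. Optimality of $x(b)$ for \ref{p:PrLP} on $(t,u)$ does not by itself imply that $B$ is a dual-feasible basis, because with degeneracy a different basis could certify optimality. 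To handle this, I will invoke Lemma~\ref{lma:basiscontain}: take an optimal basis of \ref{p:CLP} on the interval, which contains a basis of \ref{p:PrLP}. By finiteness, one such optimal \ref{p:CLP} basis can be chosen that is optimal on a sequence $b_k\to t$. Its $(x,p)$-components are affine in $b$, so they converge, and the limiting $(x,p)$ satisfies the closed constraints $Ax=b$, $x\ge0$, $A^Tp\le c$, $c^Tx-b^Tp=0$. So $x(t)$ is feasible for \ref{p:CLP}.

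\textbf{Optimality for \ref{p:CLP} at the endpoints.} This is the main obstacle. It remains to show that $\hat c^Tx(t)$ equals the optimal value of \ref{p:CLP} at $t$, so that no strictly better point appears there. I plan to argue by contradiction. Suppose $y$ is feasible for \ref{p:CLP} at $t$ with $\hat c^Ty<\hat c^Tx(t)$. Take an optimal basis $B'$ of \ref{p:PrLP} supporting $y$, with $y$ a vertex of the optimal face. Since there are finitely many bases, after passing to a subsequence a single basis $B'$ is primal-feasible at $b_k$ for $k$ large. Dual feasibility of $B'$ is $b$-independent, so $B'$ is optimal for \ref{p:PrLP} at $b_k$.

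This last step needs care. Primal feasibility of $B'$ near $t$ need not hold from the inside. If $A_{B'}^{-1}b\ge0$ fails on the interior side, I will instead perturb within the optimal face. The optimal face of \ref{p:PrLP} varies as a polyhedron $\{x: Ax=b,\,x\ge0,\,x_j=0\ \forall j\notin J\}$ for the fixed support set $J$ of a dual-feasible basis. By Hoffman-type continuity (lower semicontinuity of polyhedral feasible sets under right-hand-side changes while nonempty), there exist $y_k$ in the optimal face at $b_k$ with $y_k\to y$.

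Then $\hat c^Ty_k\to\hat c^Ty<\hat c^Tx(t)=\lim\hat c^Tx(b_k)$. So for large $k$ we have $\hat c^Ty_k<\hat c^Tx(b_k)$, which contradicts the optimality of $B$ on $(t,u)$.

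The symmetric argument at $u$ completes the proof.
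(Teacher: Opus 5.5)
Your route differs from the paper's and can be made to work. As written, though, it contains one incorrect claim and, more seriously, leaves its decisive step unproved.

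\textbf{The incorrect claim.} You say the $(x,p)$-components of a fixed Combined LP basis are affine in $b$. They are not. The last row of the Combined LP constraint matrix is $[-c^T,\ b^T,\ -b^T,\ 0]$ (the matrix $M$ in Lemma~\ref{lma:basiscontain}), so basic solutions are rational in $b$. Also, the $x$-part of that basis need not be $x_B(b)$ at all.

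You do not need this step. Let $V(b)=\min\{c^Tx: Ax=b,\ x\ge 0\}=\max\{b^Tp: A^Tp\le c\}$.
\begin{itemize}
\item On $[t,u]$, $V$ is the maximum of $b^Tp$ over the finitely many vertices of $\{p: A^Tp\le c\}$. This polyhedron is pointed because $A_B$ is invertible. Hence $V$ is continuous on $[t,u]$.
\item By hypothesis $c_B^TA_B^{-1}b=V(b)$ on $(t,u)$. By continuity the equality extends to $[t,u]$.
\item Consequently $x_B(t)$ is Primal LP optimal, and $V$ is affine on $[t,u]$.
\end{itemize}

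\textbf{The unproved step.} You assert that a single index set $J$ describes the Primal LP optimal face both at $t$ and at every $b_k$. That requires one dual solution to be optimal at $t$ and at all $b_k$ simultaneously. This is exactly the nontrivial point, and you do not establish it.

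It does hold here:
\begin{itemize}
\item Take any dual optimum $p^*$ at an interior point $w$ of $[t,u]$.
\item Weak duality gives $b^Tp^*\le V(b)$, with equality at $w$. Both sides are affine on $[t,u]$ and touch at an interior point, so they agree on all of $[t,u]$.
\item Set $J=\{j:(c-A^Tp^*)_j=0\}$. Complementary slackness gives $F_b=\{x\ge 0: Ax=b,\ x_j=0\ \forall j\notin J\}$ for every $b\in[t,u]$.
\item Hoffman's bound then yields $y_k\in F_{b_k}$ with $\|y_k-y\|\le L\|b_k-t\|$.
\end{itemize}
Alternatively, apply Hoffman to $F_b=\{x\ge 0: Ax=b,\ c^Tx\le V(b)\}$ and use continuity of $V$. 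With these repairs your detour through $B'$ is unnecessary and the argument is correct.

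\textbf{Comparison with the paper.} The paper avoids limits and Hoffman constants. It observes that $B$ remains Primal LP optimal on $[t,u]$, so $V$ is linear there. If some $\hat x$ beats $x_B(t)$ at $t$, then $\tfrac12\hat x+\tfrac12 x_B(u)$ is Primal LP optimal at $w=\tfrac12(t+u)$ and strictly beats $x_B(w)$ in $\hat c$, a contradiction. The same convexity would hand you your approximating points directly: $y_k=\lambda_k y+(1-\lambda_k)x_B(u)\in F_{b_k}$ for $b_k=\lambda_k t+(1-\lambda_k)u$.

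What your approach buys is generality. The version with $F_b=\{x\ge 0: Ax=b,\ c^Tx\le V(b)\}$ uses only continuity of $V$. It therefore gives upper semicontinuity of the Combined LP value at every point of the domain, not only at endpoints of an interval with a fixed optimal basis.

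Your caution about degeneracy is also fair. The paper invokes the reduced-cost condition for $B$, which the hypothesis does not literally supply when $x_B$ is degenerate. Its argument, however, only needs $V$ affine on $[t,u]$, and that follows as above.
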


\begin{proof}
    Suppose not, suppose there exists some optimal basis $B$ for the \ref{p:PrLP} and some interval $(t,u)$ such that $B$ corresponds to an optimal solution for the \ref{p:CLP} on $(t,u)$ but not at $b=t$ or $b=u$. We begin by observing that $B$ being optimal on $(t,u)$ for \ref{p:CLP} implies that $B$ is optimal for \ref{p:PrLP} on $b=(t,u)$ as well. 

    Furthermore, we note that $B$ must also be an optimal basis for \ref{p:PrLP} at $b=t$ and $b=u$ as the conditions for the optimality of $B$ for \ref{p:PrLP} are described as 
    \begin{align}
        A_B^{-1}b &\geq 0 \tag{feasiblity} \label{eq:basis:feas}\\
        c_{B}^TA_B^{-1}A&\leq c^T \tag{dual feasibility} \label{eq:basis:opt}
    \end{align}
    where \eqref{eq:basis:opt} remains true for any value of $b$ and \eqref{eq:basis:feas} remains true on closed intervals of $b$. 
    
    Thus, it must be that the optimal value of the \ref{p:CLP} must be $\leq \hat{c}_B^T(A_B)^{-1}b$ on the interval $b=[t,u]$ as optimality for \ref{p:PrLP} implies $B$ corresponds to a feasible solution to \ref{p:CLP}. In particular, since we have assumed $B$ does not correspond to an optimal solution for \ref{p:CLP} at $b=t$ and $b=u$, this implies that the optimal value at $b=t$ and $b=u$ are strictly less than $\hat{c}_B^T(A_B)^{-1}t$, $\hat{c}^T_B(A_B)^{-1}u$ respectively.

    We now consider $\hat{x}_{b=v}$ to be the optimal solution to \ref{p:CLP} at $b=v$ and note that the above logic implies \begin{equation}
        \hat{c}^T\hat{x}_{b=t}<\hat{c}_B^T(A_B)^{-1}t,
    \end{equation}
    \begin{equation}
        c^T \hat{x}_{b=t}=c_B^T(A_B)^{-1}t.
    \end{equation}
    Define $w=\frac{t+u}{2}$ and consider the solution, $x$, as the convex combination of $\frac{1}{2}\hat{x}$ and $\frac{1}{2}$ the corresponding solution to $B$ at $b=u$. Note that 
    \begin{equation}
        Ax=\frac{1}{2}(A\hat{x}_{b=t}+A_B(A_B)^{-1}u)=\frac{1}{2}(t+u)=w.
    \end{equation}
    Furthermore, 
    \begin{equation}
        c^Tx = \frac{1}{2}(c^T\hat{x}_{b=t}+c^T_B(A_B)^{-1}u)=\frac{1}{2}(c^T_B)(A_B)^{-1}t+c^T_B(A_B)^{-1}u) = c^T_B(A_B)^{-1}w,
    \end{equation} 
    implying $x$ is optimal for \ref{p:PrLP} at $b=w$ as $B$ is an optimal basis for \ref{p:PrLP} on $(t,u)\ni w$. Thus, $x$ will be a feasible solution to \ref{p:CLP}. However, considering 
    \begin{equation}
        \hat{c}^Tx=\frac{1}{2}(\hat{c}^T\hat{x}_{b=t}+\hat{c}^T(A_B)^{-1}u)<\frac{1}{2}(\hat{c}^T(A_B)^{-1}t+\hat{c}^T(A_B)^{-1}u)=\hat{c}^T(A_B)^{-1}w
    \end{equation} yields a contradiction as $B$ was assumed to correspond to an optimal solution for \ref{p:CLP} on $b=(t,u)\ni w$. Repeating the same logic but for a convex combination of the solutions $x_{b=u}$ and $(A_B)^{-1}t$ similarly yields a contradiction for the non optimality of $B$ at $b=u$.

\end{proof}

\section{Statement on the use of Generative AI}
During the preparation of this work, the authors used Google Gemini to provide editing suggestions for word choice, flow, and clarity. Google Gemini was also used in order to generate code scaffolding for plots and figures. After using this tool, the authors reviewed and edited the content as needed and take full responsibility for the content of the publication.

\section*{Acknowledgments}
This work was funded by Meta. Additionally, L.C. was partially supported by the Dick and Jerry Smallwood Fellowship Fund during portions of this work. The authors also thank Thomas Lee for the generation and provision of ERCOT grid network and demand data.


\bibliographystyle{informs2014} 
\bibliography{refs, references}


\end{document}